\newtheorem{theorem}{Theorem}
\newtheorem{theoremannounce}{Theorem}
\numberwithin{theorem}{section}
\theoremstyle{plain}
\newtheorem{definition}[theorem]{Definition}
\newtheorem{lemma}[theorem]{Lemma}
\newtheorem{lemmaconst}[theorem]{Lemma/Construction}
\newtheorem{problem}{Problem}
\newtheorem{proposition}[theorem]{Proposition}
\theoremstyle{remark}
\newtheorem{example}[theorem]{Example}
\numberwithin{equation}{section}
\begin{document}
\title[Quinn's formula and abelian $3$-cocycles]{Quinn's formula and abelian\linebreak$3$-cocycles for quadratic forms}
\author{Oliver Braunling}
\address{Mathematical Institute, University of Freiburg, Ernst-Zermelo-Strasse 1,
79104, Freiburg im Breisgau, Germany}
\email{oliver.braeunling@math.uni-freiburg.de}
\thanks{The author was supported by DFG GK1821 \textquotedblleft Cohomological Methods
in Geometry\textquotedblright.}
\keywords{pointed braided fusion category, abelian 3-cocycle}
\subjclass{18M20, 18M15}

\begin{abstract}
In pointed braided fusion categories knowing the self-symmetry braiding of
simples is theoretically enough to reconstruct the associator and braiding on
the entire category (up to twisting by a braided monoidal auto-equivalence).
We address the problem to provide explicit associator formulas given only such
input. This problem was solved by Quinn in the case of finitely many simples.
We reprove and generalize this in various ways. In particular, we show that
extra symmetries of Quinn's associator can still be arranged to hold in
situations where one has infinitely many isoclasses of simples.

\end{abstract}
\maketitle

\section{Introduction}

This note applies to both (a) pointed braided fusion categories as well as (b)
braided categorical groups. Both are special types of braided monoidal
categories. Both settings are closely related, yet a little different. We hope
that we have found a way to formulate the introduction so that it is clear,
irrespective of which of these applications the reader might have in mind.
Among our four results below, Theorem \ref{thmann_GeneralizedQuinn} and
Theorem \ref{thmann_ExponentialFormatThreeCocycle} mainly reprove known
results differently, while Theorem \ref{thmann_MainCocycleProducer} and
Theorem \ref{thm_annA} appear to be new.\medskip

The problem motivating this note is the following: Suppose $(\mathsf{C}%
,\otimes)$ is a braided monoidal category of type either as in (a) or (b).
Then this category has an associator%
\[
a_{X,Y,Z}:(X\otimes Y)\otimes Z\overset{\sim}{\longrightarrow}X\otimes
(Y\otimes Z)
\]
and a braiding%
\[
s_{X,Y}:X\otimes Y\overset{\sim}{\longrightarrow}Y\otimes X
\]
as part of its braided monoidal structure.\ One may find a different
associator/braiding for the same bifunctor $\otimes:\mathsf{C}\times
\mathsf{C}\rightarrow\mathsf{C}$ such that the identity functor
$\operatorname*{id}:\mathsf{C}\rightarrow\mathsf{C}$ can be promoted to a
braided monoidal functor\footnote{in the setting (a) we tacitly assume these
to be $k$-linear, where $k$ is the base field of the fusion category.}. The
\textquotedblleft orbit\textquotedblright\ of all associators/braidings which
can obtained by twisting with such braided monoidal self-equivalences is
pinned down by very little data. In concrete terms: In the setting of (a)
suppose $X$ is a simple object, resp. in the setting of (b) an arbitrary
object. It has the self-symmetry\ braiding $s_{X,X}:X\otimes X\overset{\sim
}{\longrightarrow}X\otimes X$, and moreover $X$ is invertible in $\mathsf{C}$,
so we have the functor of tensoring with its inverse, $(-)\mapsto(-)\otimes
X^{-1}$. Thus, by functoriality, we get a well-defined automorphism%
\begin{equation}
s_{X,X}\otimes X^{-1}\otimes X^{-1}:1_{\mathsf{C}}\overset{\sim}%
{\longrightarrow}1_{\mathsf{C}} \label{lbb1}%
\end{equation}
of the monoidal unit $1_{\mathsf{C}}$. Hence, this is an element of the
abelian group $\pi_{1}(\mathsf{C},\otimes)=\operatorname{Aut}_{\mathsf{C}%
}(1_{\mathsf{C}})$. It turns out that this element only depends on the class
of the object $X$ in $\pi_{0}(\mathsf{C}_{\operatorname*{simp}},\otimes)$,
where in the setting of (a) $\mathsf{C}_{\operatorname*{simp}}$ is the
groupoid of simple objects in $\mathsf{C}$, and for (b) let $\mathsf{C}%
_{\operatorname*{simp}}:=\mathsf{C}$ be the entire category. Thus, we get a
map%
\[
q:\pi_{0}(\mathsf{C}_{\operatorname*{simp}},\otimes)\longrightarrow\pi
_{1}(\mathsf{C}_{\operatorname*{simp}},\otimes)\text{.}%
\]
One can show that this map is a quadratic form. On the one hand, this form
does not change under the aforementioned braided monoidal self-equivalences of
$\mathsf{C}$ (which induce the identity map on $\pi_{0}$ and $\pi_{1}$). On
the other hand, it also distinguishes different such orbits, i.e. it is a
complete invariant.

This note is concerned with the problem to provide an explicit formula for the
associator and braiding for a given quadratic form $q$, i.e. if we only know
the self-symmetry\ braidings of Equation \ref{lbb1}, but have possibly no clue
and no candidates what the associator and braiding should do on general
objects $X,Y,Z$. This is a kind of integration problem: Find a valid choice of
associators and braidings for the entire category such that, restricted to
self-symmetries, it agrees with the given quadratic form.

Even stronger: One may ask whether there is a \textquotedblleft simplest
choice\textquotedblright\ of associators, e.g., an associator with additional
symmetries (we propose a possible solution in Theorem \ref{thm_annA}).\medskip

The problem can be attacked in concrete form as follows: First, since we only
work up to braided monoidal equivalences inducing the identity map on $\pi
_{0}$ and $\pi_{1}$, it suffices to work with a skeleton of the category. Here
the datum of an associator and braiding is encoded in an abelian $3$-cocycle%
\[
\lbrack(h,c)]\in H_{ab}^{3}(G,M)
\]
for $G:=\pi_{0}(\mathsf{C}_{\operatorname*{simp}},\otimes)$, $M:=\pi
_{1}(\mathsf{C}_{\operatorname*{simp}},\otimes)$. In concrete terms, a cocycle
representative has the form of maps%
\[
h:G\times G\times G\longrightarrow M\qquad\text{and}\qquad c:G\times
G\longrightarrow M
\]
corresponding to the associator and braiding. Note that in general $h$ or $c$
cannot be taken multilinear. It is more complicated than that.

For general abelian groups $G,M$, Eilenberg and Mac Lane have constructed an
isomorphism%
\begin{equation}
\operatorname*{tr}:H_{ab}^{3}(G,M)\overset{\sim}{\longrightarrow
}\operatorname*{Quad}(G,M)\text{,} \label{lbjca1}%
\end{equation}
showing that this cohomology group is isomorphic to the group of quadratic
forms on $G$ with values in $M$, \cite{MR0045115}, \cite{MR0056295}. This
isomorphism underlies the above reconstruction procedure. Given only the
\textquotedblleft self-symmetries\textquotedblright, i.e. only the right side,
finding an associator and braiding amounts to finding a preimage under this
isomorphism. (In particular, this paper provides a new proof of the
surjectivity of the map in Equation \ref{lbjca1} as a side result, see
\S \ref{sect_ProofOfEilenbergMacLaneThm})

\subsection{Application to explicit associator formulas}

Unfortunately, solving the integration problem is not entirely trivial. The
classical proof for the isomorphism in Equation \ref{lbjca1} goes as follows:
Do the cases $G=\mathbb{Z}$ and $G=\mathbb{Z}/n\mathbb{Z}$ individually, then
use that both sides of Equation \ref{lbjca1} are quadratic functors in $G$,
and exploit that any abelian group is a colimit of finitely generated ones.
This sounds deceivingly simple, but note that if one wants an explicit
formula, one needs a cocycle formula, i.e. a lift%
\begin{equation}%
%%%%%%%%%%%%%
%%%%%%%%%%
\xymatrix{
Z_{ab}^{3}(G,M) \ar@{->>}[d] \\
H_{ab}^{3}(G,M) & \operatorname{Quad}(G,M) \ar_{\cong}^{\operatorname{tr}^{-1}
}[l] \ar@{-->}[ul]_{?}
}
%%%%%%%%%%
\label{l_fig1}%
\end{equation}
and once you consider such lifts, the nice functoriality properties break (as
an illustration: A\ compatible family of cohomology classes in some directed
system underlying the colimit need not come from a compatible family of
cocycles, because there is no a priori control of the coboundaries along the
system). We solve this as follows:

In \S \ref{sect_AdmissiblePresentation}-\ref{sect_Lift} we shall introduce the
concept of `\textit{optimal admissible presentations}' and `\textit{admissible
liftings}', and this can be thought of as machines to produce explicit cocycle
formulas. Since these are in general necessarily non-linear maps and highly
non-unique, it should not be surprising that admissible presentations and
liftings also allow for a lot of variation.

\begin{theoremannounce}
\label{thmann_MainCocycleProducer}Let $G,M$ be abelian groups and
$q\in\operatorname*{Quad}(G,M)$. Suppose

\begin{itemize}
\item $(F_{0},\pi,C)$ is an optimal admissible presentation (see
\S \ref{sect_AdmissiblePresentation}), and

\item $\widetilde{(-)}$ is an admissible lifting (see \S \ref{sect_Lift}).
\end{itemize}

Then%
\[
h(x,y,z):=-C(\widetilde{x},L(y,z))\text{,}\qquad c(x,y):=C(\widetilde
{x},\widetilde{y})\text{,}%
\]
with the non-linear function $L(x,y):=\widetilde{(x+y)}-\widetilde
{x}-\widetilde{y}$, defines an abelian $3$-cocycle whose attached quadratic
form is $q$. This gives a concrete lift as in Figure \ref{l_fig1} for the
inverse Eilenberg--Mac Lane isomorphism.
\end{theoremannounce}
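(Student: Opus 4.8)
The plan is to verify directly that the proposed pair $(h,c)$ is an abelian $3$-cocycle in the sense of Eilenberg--Mac Lane, and then to compute its associated quadratic form under the trace map $\operatorname{tr}$ and check it equals $q$. The abelian $3$-cocycle conditions amount to three families of identities: the pentagon identity for $h$ alone, and the two hexagon identities coupling $h$ and $c$. So the strategy is to unwind each of these equations and reduce it, using the defining properties of the optimal admissible presentation $(F_0,\pi,C)$ and the admissible lifting $\widetilde{(-)}$, to an identity that follows formally from the (bi)linearity and cocycle properties that those objects are assumed to satisfy. The key technical device is the function $L(x,y)=\widetilde{(x+y)}-\widetilde{x}-\widetilde{y}$, which measures the failure of the lift to be additive; I expect $L$ to take values in the subgroup where $C$ behaves linearly, so that expressions like $C(\widetilde{x},L(y,z))$ can be manipulated algebraically.

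First I would spell out what ``optimal admissible presentation'' and ``admissible lifting'' give us concretely: presumably $\pi\colon F_0\twoheadrightarrow G$ is a surjection from a free (or suitably projective) abelian group with $\widetilde{(-)}$ a set-theoretic section, and $C$ is a bilinear (or bi-additive) pairing on $F_0$ whose symmetrization or antisymmetrization recovers the data of $q$. The cocycle $c(x,y)=C(\widetilde{x},\widetilde{y})$ is then bilinear ``up to the correction terms $L$'', and $h$ is built precisely to absorb those corrections. Next I would substitute these formulas into the pentagon equation
\[
h(y,z,w)-h(x+y,z,w)+h(x,y+z,w)-h(x,y,z+w)+h(x,y,z)=0
\]
and rewrite every term as a value of $C$ against an $L$-expression; the sought identity should then collapse once one uses the cocycle-type relation satisfied by $L$, namely $L(x+y,z)+L(x,y)=L(x,y+z)+L(y,z)$, together with bi-additivity of $C$ in suitable arguments. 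The two hexagon identities would be handled analogously, now also invoking the symmetry or twisting property relating $C(\widetilde{x},\widetilde{y})$ and $C(\widetilde{y},\widetilde{x})$ that the admissibility of the presentation guarantees.

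Finally I would compute the attached quadratic form. By definition the trace map sends an abelian $3$-cocycle $(h,c)$ to the quadratic form $x\mapsto c(x,x)=C(\widetilde{x},\widetilde{x})$, so the last step is to verify $C(\widetilde{x},\widetilde{x})=q(x)$ for all $x\in G$; this should be exactly the normalization built into the definition of an optimal admissible presentation, i.e. the diagonal of $C$ is required to represent $q$ through the section $\widetilde{(-)}$. One must also check well-definedness, that the formulas descend from $F_0$ to $G$ and do not depend on auxiliary choices beyond what the statement fixes, but since $h$ and $c$ are written purely in terms of the chosen lift this is automatic once the cocycle conditions hold.

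The main obstacle I anticipate is the bookkeeping in the pentagon and hexagon verifications: because $h$ and $c$ are genuinely non-linear in their arguments (the non-linearity is entirely funneled through $L$), one cannot simply expand and cancel, and must carefully track which argument slot of $C$ each $L$-term occupies. The delicate point will be ensuring that $C$ is additive in precisely the slots where the $L$-cocycle relation is applied; if the admissible presentation only guarantees additivity of $C$ in one variable on a restricted subgroup, then establishing that all the relevant $L$-values land in that subgroup is where the real content of the ``admissibility'' hypotheses must be used.
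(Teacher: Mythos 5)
Your proposal follows essentially the same route as the paper: direct verification of the pentagon via the cocycle relation for $L$ and the bilinearity of $C$, of the hexagons via the symmetry of $L$ and the relation $C(x,y)+C(y,x)=b(\pi x,\pi y)$, and of the trace via the optimality condition $C(\widetilde{x},\widetilde{x})=q(x)$. The one point to sharpen in execution is the role of admissibility: $C$ is bilinear on all of $F_0$, and what is actually needed is that $L$ takes values in $F_1=\ker(\pi)$ and that $C$ vanishes on $F_1\times F_1$, which kills the leftover term $C(L(u,x),L(y,z))$ in the pentagon --- exactly the spot you flagged as where the hypothesis must enter.
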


This will be Theorem \ref{thm_GeneralFormulaWithAP}. It underlies all other
results of the paper. Quinn \cite{MR1734419} has given an explicit formula in
the case $G$ finite abelian and $M:=R^{\times}$ for $R$ a commutative ring. We
give a new proof for his formula as an application of Theorem
\ref{thmann_MainCocycleProducer}. In our generalized version $M$ is arbitrary
and $G$ is allowed to be of a more general form. In particular, all finitely
generated abelian groups are covered by our version:

\begin{theoremannounce}
[Generalized Quinn formula]\label{thmann_GeneralizedQuinn}Let $M$ be any
abelian group. Suppose%
\[
G=\left(  \bigoplus_{j\in J_{1}}\mathbb{Z}\right)  \oplus\left(
\bigoplus_{j\in J_{2}}\mathbb{Z}/n_{j}\mathbb{Z}\right)
\]
for $J_{1},J_{2}$ any index sets, and $n_{j}\geq1$ integers. Fix a total order
on the disjoint union $J:=J_{1}\dot{\cup}J_{2}$, say with $J_{1}<J_{2}$. Write
$(\mathsf{e}_{j})_{j\in J}$ for the generator $1$ in the $j$-th cyclic group.
Let $q\in\operatorname*{Quad}(G,M)$ be a quadratic form and
$b(x,y):=q(x+y)-q(x)-q(y)$ its polarization. Define%
\[
\sigma_{i,j}:=\left\{
\begin{array}
[c]{ll}%
b(\mathsf{e}_{i},\mathsf{e}_{j}) & \text{if }i<j\\
q(\mathsf{e}_{i}) & \text{if }i=j\\
0 & \text{if }i>j\text{.}%
\end{array}
\right.
\]
Then the pair $(h,c)$ with%
\[
h(x,y,z):=\sum_{\substack{j\in J_{2}\\\operatorname*{with}\text{ }y_{j}%
+z_{j}\geq n_{j}}}x_{j}n_{j}\sigma_{j,j}\qquad\text{and}\qquad c(x,y):=\sum
_{\substack{i,j\in J\\\operatorname*{with}\text{ }i\leq j}}x_{i}y_{j}%
\sigma_{i,j}%
\]
defines an abelian $3$-cocycle such that the trace map of Equation
\ref{lbjca1} sends it to the given quadratic form $q$. Here $x_{j}$ (resp.
$y_{j},z_{j})$ refers to coordinates with values $x_{j}\in\mathbb{Z}$ for
$j\in J_{1}$ resp. $x_{j}\in\{0,1,2,\ldots,n_{j}-1\}$ for $j\in J_{2}$. The
map $q\mapsto(h,c)$ is linear, so it provides a group homomorphism
$\operatorname*{Quad}(G,M)\rightarrow Z_{ab}^{3}(G,M)$, which makes Diagram
\ref{l_fig1} commute.
\end{theoremannounce}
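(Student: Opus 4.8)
The plan is to deduce this from Theorem~\ref{thmann_MainCocycleProducer} by exhibiting an explicit optimal admissible presentation together with an explicit admissible lifting, and then checking that the two universal formulas $h(x,y,z)=-C(\widetilde{x},L(y,z))$ and $c(x,y)=C(\widetilde{x},\widetilde{y})$ collapse to the stated expressions. First I would take $F_{0}:=\bigoplus_{j\in J}\mathbb{Z}f_{j}$, the free abelian group on the index set $J$, with the evident surjection $\pi:F_{0}\rightarrow G$ sending $f_{j}\mapsto\mathsf{e}_{j}$ (so that $\ker\pi=\bigoplus_{j\in J_{2}}n_{j}\mathbb{Z}f_{j}$), and I would define $C:F_{0}\times F_{0}\rightarrow M$ as the \emph{bilinear} form determined on generators by $C(f_{i},f_{j}):=\sigma_{i,j}$. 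The triangular shape of $\sigma$ is exactly what makes $C$ the natural ``upper triangular'' bilinear form, and a short check confirms the two compatibilities one expects of a presentation: symmetrizing gives $C(f_{i},f_{j})+C(f_{j},f_{i})=b(\mathsf{e}_{i},\mathsf{e}_{j})$ (using $b(\mathsf{e}_{j},\mathsf{e}_{j})=2q(\mathsf{e}_{j})$ on the diagonal), and restricting to the diagonal gives $C(f_{j},f_{j})=q(\mathsf{e}_{j})$. For the admissible lifting I would take the reduced-coordinate section $\widetilde{(-)}:G\rightarrow F_{0}$, sending $x$ with coordinates $x_{j}\in\mathbb{Z}$ ($j\in J_{1}$) and $x_{j}\in\{0,\ldots,n_{j}-1\}$ ($j\in J_{2}$) to $\sum_{j}x_{j}f_{j}$.

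With these choices in hand, the remaining computation is direct. For the non-linear correction $L(x,y)=\widetilde{(x+y)}-\widetilde{x}-\widetilde{y}$, the $J_{1}$-coordinates cancel, while for $j\in J_{2}$ there is at most a single carry (since $x_{j}+y_{j}\leq 2n_{j}-2$), which contributes $-n_{j}$, so
\[
L(x,y)=-\sum_{\substack{j\in J_{2}\\ x_{j}+y_{j}\geq n_{j}}}n_{j}f_{j}\text{.}
\]
Feeding this into Theorem~\ref{thmann_MainCocycleProducer}, bilinearity of $C$ gives $c(x,y)=C(\widetilde{x},\widetilde{y})=\sum_{i,j}x_{i}y_{j}\sigma_{i,j}=\sum_{i\leq j}x_{i}y_{j}\sigma_{i,j}$, the last equality because $\sigma_{i,j}=0$ for $i>j$. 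Likewise $h(x,y,z)=-C(\widetilde{x},L(y,z))=\sum_{j\in J_{2},\,y_{j}+z_{j}\geq n_{j}}n_{j}\sum_{i}x_{i}\sigma_{i,j}$. The key simplification, and the one place where the torsion really enters, is that for $i<j$ with $j\in J_{2}$ the term $n_{j}\sigma_{i,j}=n_{j}b(\mathsf{e}_{i},\mathsf{e}_{j})=b(\mathsf{e}_{i},n_{j}\mathsf{e}_{j})=b(\mathsf{e}_{i},0)=0$ vanishes, because $\mathsf{e}_{j}$ has order $n_{j}$ in $G$ and the polarization $b$ is bilinear. Together with $\sigma_{i,j}=0$ for $i>j$, only the diagonal $i=j$ survives, leaving $n_{j}\sum_{i}x_{i}\sigma_{i,j}=x_{j}n_{j}\sigma_{j,j}$ and hence exactly the asserted formula for $h$.

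I expect the genuine work to lie not in this calculation but in verifying that $(F_{0},\pi,C)$ meets the precise definition of an \emph{optimal} admissible presentation from \S\ref{sect_AdmissiblePresentation}, and that the reduced-coordinate section meets the definition of an \emph{admissible lifting} from \S\ref{sect_Lift}; in particular one must confirm the compatibility of $C$ with $\ker\pi$ that optimality demands, which is essentially the same order-$n_{j}$ vanishing identity used above. Once those definitional checks are in place, Theorem~\ref{thmann_MainCocycleProducer} immediately supplies that $(h,c)\in Z_{ab}^{3}(G,M)$ and that its trace equals $q$. Finally, linearity in $q$ is clear from the construction: $q\mapsto\sigma_{i,j}$ is linear in $q$ since both $q$ and $b$ are, and $(h,c)$ depend $\mathbb{Z}$-linearly on the $\sigma_{i,j}$, so $q\mapsto(h,c)$ is a group homomorphism $\operatorname{Quad}(G,M)\rightarrow Z_{ab}^{3}(G,M)$ lifting $\operatorname{tr}^{-1}$ and making Diagram~\ref{l_fig1} commute.
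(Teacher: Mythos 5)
Your proposal is correct and follows essentially the same route as the paper: the proof of Theorem \ref{thm_QuinnFormula} there uses exactly this presentation $F_{0}=\bigoplus_{j\in J}\mathbb{Z}$, the upper-triangular bilinear form $C(e_{i},e_{j})=\sigma_{i,j}$, the reduced-coordinate lift, and the same $n_{j}\sigma_{i,j}=0$ simplification, all fed into Theorem \ref{thm_GeneralFormulaWithAP}. The one imprecision is in your description of the deferred definitional checks: the \emph{admissibility} check (vanishing of $C$ on $F_{1}\times F_{1}$) is indeed the order-$n_{j}$ identity you cite, but \emph{optimality} means $C(x,x)=q(\pi x)$ for \emph{all} $x\in F_{0}$, not just basis vectors, and is not a statement about $\ker\pi$; the paper establishes it in Lemma \ref{lemma_ConstructAP} by noting that $x\mapsto C(x,x)-q(\pi x)$ is a quadratic form with vanishing polarization, hence an additive homomorphism into ${}_{2}M$ that vanishes on the basis $(e_{j})$ and therefore everywhere. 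With that argument supplied, your proof matches the paper's.
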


This will be Theorem \ref{thm_QuinnFormula}. If $G$ is finite abelian (so
$J_{1}=\varnothing$ and $\#J_{2}<\infty$) and $M:=R^{\times}$, then up to
rewriting the formula for $h$ and $c$ in the multiplicative notation customary
for elements in the units $R^{\times}$, we recover precisely Quinn's formula
given in \cite[\S 2.5.2]{MR1734419}.

For some applications, especially when wanting to do explicit computations in
the setting of fusion categories over $\mathbb{C}$, the following formulation
might be more useful. We provide it with full details so that it can easily be
referenced whenever needed:

\begin{theoremannounce}
[Exponential format $3$-cocycle formula]%
\label{thmann_ExponentialFormatThreeCocycle}Suppose%
\begin{equation}
G=\bigoplus_{k\in J}\mathbb{Z}/n_{k}\mathbb{Z} \label{lviaps8x}%
\end{equation}
for $n_{k}\geq1$ and $J$ some (possibly infinite) totally ordered index set.
Write $(e_{k})_{k\in J}$ for the generator $1$ of the $k$-th summand. Then
there is a bijection between the following three sets:

\begin{enumerate}
\item All possible choices of values

\begin{itemize}
\item $p^{(k)}\in\{0,1,\ldots,\gcd(n_{k}^{2},2n_{k})-1\}$ for every $k\in J$,

\item $q^{(k,l)}\in\{0,1,\ldots,\gcd(n_{k},n_{l})-1\}$ for all $k<l$ with
$k,l\in J$.
\end{itemize}

\item All quadratic forms $q\in\operatorname*{Quad}(G,\mathbb{C}^{\times})$,
uniquely described by the following properties%
\begin{align*}
q(e_{k})  &  =\exp\left(  \frac{2\pi i}{\gcd(n_{k}^{2},2n_{k})}p^{(k)}\right)
\text{,}\\
b(e_{k},e_{l})  &  =\exp\left(  \frac{2\pi i}{\gcd(n_{k},n_{l})}%
q^{(k,l)}\right)  \qquad\text{(for }k<l\text{),}%
\end{align*}
where $b$ is the polarization of $q$ (and furthermore we necessarily then have
$b(e_{k},e_{l})=b(e_{l},e_{k})$ for $k>l$ and $b(e_{k},e_{k})=2q(e_{k})$ as well).

\item All abelian $3$-cocycles $(h,c)\in H_{ab}^{3}(G,\mathbb{C}^{\times})$,
uniquely pinned down by the cocycle representative%
\begin{align*}
c(x,y)  &  =\prod_{k<l}\exp\left(  \frac{2\pi iq^{(k,l)}}{\gcd(n_{k},n_{l}%
)}x_{k}y_{l}\right) \\
&  \qquad\cdot\prod_{k}\exp\left(  \frac{2\pi ip^{(k)}}{\gcd(2n_{k},n_{k}%
^{2})}x_{k}y_{k}\right)  \text{,}%
\end{align*}
and%
\[
h(x,y,z)=\prod_{k}\exp\left(  \frac{2\pi ip^{(k)}}{\gcd(2n_{k},n_{k}^{2}%
)}\left(  x_{k}\left(  [y_{k}]_{n_{k}}+[z_{k}]_{n_{k}}-[y_{k}+z_{k}]_{n_{k}%
}\right)  \right)  \right)  \text{,}%
\]
where $x_{k}$ (resp. $y_{k},z_{k}$) denotes the coordinates of vectors
$x,y,z\in G$ according to Equation \ref{lviaps8x}. Here $[-]_{n_{k}}$ refers
to the remainder of division by $n_{k}$, expressed as an element in
$\{0,1,\ldots,n_{k}-1\}$.
\end{enumerate}

Really, $\operatorname*{Quad}(G,\mathbb{C}^{\times})$ and $H_{ab}%
^{3}(G,\mathbb{C}^{\times})$ are abelian groups and the above bijections are
abelian group isomorphisms, given in terms of the parameters $p^{(k)}%
,q^{(k,l)}$ by elementwise addition in the quotient groups (i.e.
$\mathbb{Z}/(n_{k}^{2},2n_{k})$ for $p^{(k)}$ etc.).\newline The map
$q\mapsto(h,c)$ is linear, so it provides a group homomorphism
$\operatorname*{Quad}(G,M)\rightarrow Z_{ab}^{3}(G,M)$, which makes Diagram
\ref{l_fig1} commute.
\end{theoremannounce}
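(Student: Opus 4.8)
The plan is to deduce everything from the Generalized Quinn formula (Theorem~\ref{thmann_GeneralizedQuinn}) specialized to $J_1=\varnothing$ and $M=\mathbb{C}^\times$, organizing the three-way bijection as the composite of two separate correspondences: an elementary parametrization $(1)\leftrightarrow(2)$ of quadratic forms by root-of-unity data, and the Eilenberg--Mac Lane isomorphism $(2)\leftrightarrow(3)$ made explicit by Quinn's cocycle. Since $G=\bigoplus_k\mathbb{Z}/n_k\mathbb{Z}$ is a direct sum, each $x,y,z\in G$ has finite support, so all displayed products are finite and no convergence issue arises.

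For $(1)\leftrightarrow(2)$, I would first record the general identity $b(x,x)=q(x)^2$ valid for any quadratic form: biadditivity gives $b(x,-x)=b(x,x)^{-1}$, while $b(x,-x)=q(0)q(x)^{-1}q(-x)^{-1}=q(x)^{-2}$ using $q(-x)=q(x)$, whence $b(x,x)=q(x)^2$. This yields $q(me_k)=q(e_k)^{m^2}$ and, more generally, $q(\sum_k x_k e_k)=\prod_k q(e_k)^{x_k^2}\prod_{k<l}b(e_k,e_l)^{x_k x_l}$, so the evaluation map sending $q$ to $\bigl(q(e_k),\,b(e_k,e_l)_{k<l}\bigr)$ is injective. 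The moduli are forced by well-definedness on the cyclic quotients: $q(n_k e_k)=1$ gives $q(e_k)^{n_k^2}=1$, and $b(e_k,e_k)^{n_k}=b(n_k e_k,e_k)=1$ together with $b(e_k,e_k)=q(e_k)^2$ gives $q(e_k)^{2n_k}=1$, so that $q(e_k)^{\gcd(n_k^2,2n_k)}=1$; likewise $b(e_k,e_l)^{\gcd(n_k,n_l)}=1$ for $k<l$. Because $\mathbb{C}^\times$ contains all roots of unity these are precisely the parameters $p^{(k)},q^{(k,l)}$. For surjectivity I would feed an arbitrary admissible tuple into the displayed formula for $q$ and check that it descends to $G$ and is quadratic: the change $x_k\mapsto x_k+n_k$ multiplies $q$ by $q(e_k)^{2n_k x_k+n_k^2}\prod_{l\neq k}b(e_k,e_l)^{n_k x_l}=1$, which uses exactly both root-of-unity conditions, and the polarization computes to the prescribed biadditive $b$.

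For $(2)\leftrightarrow(3)$ I would specialize Theorem~\ref{thmann_GeneralizedQuinn} and translate its additive formula into the multiplicative notation of $\mathbb{C}^\times$. Then $\sigma_{i,j}$ becomes $b(e_i,e_j)$ for $i<j$ and $q(e_i)$ for $i=j$, so $c(x,y)=\prod_{i\le j}\sigma_{i,j}^{x_i y_j}$ reproduces the stated product over $k<l$ times the diagonal product over $k$ after substituting the exponential values. For $h$, the carry condition $y_k+z_k\ge n_k$ is exactly the statement that $[y_k]_{n_k}+[z_k]_{n_k}-[y_k+z_k]_{n_k}$ equals $n_k$ (and $0$ otherwise), and since the coordinates are already reduced we have $[y_k]_{n_k}=y_k$; thus $\prod_{j:\,y_j+z_j\ge n_j}q(e_j)^{x_j n_j}$ becomes the single stated product over all $k$. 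Theorem~\ref{thmann_GeneralizedQuinn} guarantees that this $(h,c)$ is an abelian $3$-cocycle with $\operatorname{tr}(h,c)=q$; since $\operatorname{tr}$ is the Eilenberg--Mac Lane isomorphism of Equation~\ref{lbjca1}, the class $[(h,c)]$ is the unique preimage of $q$, which both establishes the bijection $(2)\leftrightarrow(3)$ and identifies $q\mapsto[(h,c)]$ with $\operatorname{tr}^{-1}$.

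Finally, for the group statements I would note that the data in (1) form the abelian group $\prod_k\mathbb{Z}/\gcd(n_k^2,2n_k)\mathbb{Z}\times\prod_{k<l}\mathbb{Z}/\gcd(n_k,n_l)\mathbb{Z}$ under coordinatewise addition, that multiplying quadratic forms in $\mathbb{C}^\times$ corresponds to adding these parameters modulo the respective gcd's, and that the cocycle formulas are manifestly multiplicative in the parameters; hence all three bijections are group isomorphisms. Linearity of $q\mapsto(h,c)$ (asserted in Theorem~\ref{thmann_GeneralizedQuinn}) together with $\operatorname{tr}(h,c)=q$ then makes Diagram~\ref{l_fig1} commute. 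The hard part will be none of the cohomology, which is imported wholesale from Theorem~\ref{thmann_GeneralizedQuinn}, but rather pinning down the exact modulus $\gcd(n_k^2,2n_k)$: the nonobvious constraint $q(e_k)^{2n_k}=1$ is produced by the identity $b(e_k,e_k)=q(e_k)^2$, and one must check both that it is necessary and that, together with $q(e_k)^{n_k^2}=1$, it suffices for the candidate form to be well defined --- note $\gcd(n_k^2,2n_k)=n_k\gcd(n_k,2)$ equals $n_k$ or $2n_k$ according as $n_k$ is odd or even.
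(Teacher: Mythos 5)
Your proposal is correct, and for the key lemma it takes a genuinely different route from the paper. The paper splits the theorem the same way you do, but it establishes the correspondence $(1)\leftrightarrow(2)$ (Lemma \ref{lemma_ParametrizeQuadForms}) by first reducing to finite $J$ via a colimit argument, deriving the torsion bound on $q(e_k)$ from Whitehead's classification $\operatorname{Quad}(\mathbb{Z}/n_k\mathbb{Z},\mathbb{C}^{\times})\cong\operatorname{Hom}(\mathbb{Z}/(2n_k,n_k^2)\mathbb{Z},\mathbb{C}^{\times})$ (checking when $x\mapsto x^2m$ descends along $\mathbb{Z}\twoheadrightarrow\mathbb{Z}/n_k\mathbb{Z}$), and then proving surjectivity of the evaluation map by a cardinality count using that Whitehead's universal quadratic functor $\Gamma$ is a quadratic functor, so that $\operatorname{Quad}(A\oplus B,\mathbb{C}^{\times})\cong\operatorname{Quad}(A,\mathbb{C}^{\times})\oplus\operatorname{Quad}(B,\mathbb{C}^{\times})\oplus\operatorname{Hom}(A\otimes B,\mathbb{C}^{\times})$. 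Your argument replaces this with a direct verification: injectivity from the expansion $q(\sum x_ke_k)=\prod_kq(e_k)^{x_k^2}\prod_{k<l}b(e_k,e_l)^{x_kx_l}$, necessity of the moduli from $q(e_k)^{n_k^2}=1$ and $b(e_k,e_k)=q(e_k)^2\Rightarrow q(e_k)^{2n_k}=1$, and surjectivity by checking that the candidate formula is well defined under $x_k\mapsto x_k+n_k$ and has the prescribed biadditive polarization. This is more elementary, avoids the functor $\Gamma$ and the reduction to finite $J$ entirely, and exhibits the inverse map explicitly rather than inferring existence from a count. For $(2)\leftrightarrow(3)$ the two proofs coincide in substance: the paper reruns the admissible-presentation computation of Theorem \ref{thm_QuinnFormula} with the exponential values of $\sigma_{k,l}$ substituted in, while you cite that theorem with $J_1=\varnothing$ and translate to multiplicative notation, using the same observation that the carry term $[y_k]_{n_k}+[z_k]_{n_k}-[y_k+z_k]_{n_k}$ equals $n_k$ or $0$; both rely on injectivity of the Eilenberg--Mac Lane trace for the uniqueness claim in (3).
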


See Theorem \ref{thm_ExplicitAbelian3Cocycles}. This result appears to readily
imply various counting and enumeration problems in the literature regarding
small examples of pointed braided fusion categories for a given $G$, see
\S \ref{sect_ClassifSmallExamples}.

\subsection{Application to normal forms of associators}

In the setting of (a), i.e. pointed braided fusion categories, Quinn's formula
is sufficent to describe an associator and braiding in all situations. This is
because in this setting $G:=\pi_{0}(\mathsf{C}_{\operatorname*{simp}}%
,\otimes)$ is a finite abelian group and thus safely covered by both Theorem
\ref{thmann_GeneralizedQuinn} and Theorem
\ref{thmann_ExponentialFormatThreeCocycle}.

However, Quinn's formula has some special shape (e.g., far more symmetry than
one might a priori expect!). Let us broaden the question: Suppose that
$(\mathsf{C},\otimes)$ is a pointed braided fusion category, but drop the
assumption that there are only finitely many isomorphism classes of simple
objects. You could think of finite-dimensional $G$-graded vector spaces
$\mathsf{Vect}_{k}^{G}$ with some associator and braiding, but where the
grading comes from any abelian group $G$, and not just a finite one. We will
properly define this later and call it a \emph{big} fusion category.

Suppose we want to bring $(\mathsf{C},\otimes)$ into some particularly nice
\textquotedblleft normal form\textquotedblright\ under braided monoidal
equivalence. As before, replace $(\mathsf{C},\otimes)$ by a skeleton. Then the
associator and braiding are merely automorphisms. If $X,Y,Z$ are simple
objects, we may read $a_{X,Y,Z}$ and $s_{X,Y}$ as elements of $k^{\times}$
canonically. Now, the simplest conceivable normal form would be, through a
braided monoidal equivalence, to make all associators and the braiding
trivial. This is, however, an unrealistic hope (it cannot be achieved).
Perhaps the following is the best possible normal form one can expect in
general.\footnote{by \textquotedblleft in general\textquotedblright\ we mean:
for any abelian group. Note that, for example, if we restrict $G$ to free
abelian groups, our generalized Quinn formula directly shows that one can
always make the associator zero (because then $J_{2}=\varnothing$).}

\begin{theoremannounce}
[Extra symmetries]\label{thm_annA}Suppose $k$ is an algebraically closed field
of any characteristic. Let $(\mathsf{C},\otimes)$ be a $k$-linear pointed
braided big fusion category. Then $(\mathsf{C},\otimes)$ is braided monoidal
equivalent to a skeletal big fusion category such that%
\[
a_{X,Y,Z}=\frac{s_{X,Y}\cdot s_{X,Z}}{s_{X,Y\otimes Z}}\qquad\text{and}\qquad
a_{Z,X,Y}=\frac{s_{X\otimes Y,Z}}{s_{X,Z}\cdot s_{Y,Z}}%
\]
hold for all simple objects $X,Y,Z$.
\end{theoremannounce}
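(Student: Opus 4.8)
The plan is to reduce, exactly as in the body of the paper, to a skeleton, so that the braided monoidal structure is recorded by an abelian $3$-cocycle $(h,c)\in Z_{ab}^{3}(G,k^{\times})$, where $G=\pi_{0}(\mathsf{C}_{\operatorname*{simp}},\otimes)$ and $(h,c)$ has attached quadratic form $q$ (here $k$ algebraically closed is used only to identify $M=\operatorname{Aut}(1_{\mathsf{C}})=k^{\times}$). Reading $a_{X,Y,Z}$ as $h(x,y,z)$ and $s_{X,Y}$ as $c(x,y)$ for the classes $x,y,z\in G$ of $X,Y,Z$, the two asserted identities become the cocycle-level equations
\[
h(x,y,z)=\frac{c(x,y)\,c(x,z)}{c(x,y+z)}\qquad\text{and}\qquad h(z,x,y)=\frac{c(x+y,z)}{c(x,z)\,c(y,z)}\text{.}
\]
Since cohomologous abelian $3$-cocycles yield braided monoidally equivalent skeletal categories (and the class is pinned down by $q$), it suffices to produce \emph{one} representative in the class of $q$ for which these hold; the requisite change of cocycle is then realized by a braided monoidal equivalence. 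I would build that representative with the machine of Theorem \ref{thmann_MainCocycleProducer}.

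Concretely, choose a free presentation $\pi\colon F_{0}\twoheadrightarrow G$ with $R:=\ker\pi$, together with an admissible lifting $\widetilde{(-)}\colon G\to F_{0}$ as in \S\ref{sect_Lift}. Because $F_{0}$ is free, the pulled-back quadratic form $q\circ\pi$ can be realized by a \emph{bilinear} form $C\colon F_{0}\times F_{0}\to k^{\times}$, i.e. $C(v,v)=q(\pi v)$; explicitly one may take the upper-triangular form attached to a well-ordered basis, exactly as in the Quinn formula of Theorem \ref{thmann_GeneralizedQuinn}. I would arrange $(F_{0},\pi,C)$ to be an optimal admissible presentation in the sense of \S\ref{sect_AdmissiblePresentation} and feed it to Theorem \ref{thmann_MainCocycleProducer}, giving $h(x,y,z)=-C(\widetilde{x},L(y,z))$ and $c(x,y)=C(\widetilde{x},\widetilde{y})$ with $L(x,y)=\widetilde{(x+y)}-\widetilde{x}-\widetilde{y}\in R$. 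The observation driving everything is that the symmetrization $B(v,w):=C(v,w)+C(w,v)$ of \emph{any} bilinear $C$ realizing $q\circ\pi$ is forced to be the pullback $\pi^{\ast}b$ of the polarization $b$ of $q$, since $B$ is nothing but the polarization of the quadratic form $v\mapsto C(v,v)=q(\pi v)$.

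With these choices the verification splits into the two identities. For the first, bilinearity of $C$ in its second slot gives (writing $k^{\times}$ additively) the chain $c(x,y)+c(x,z)-c(x,y+z)=C(\widetilde{x},\widetilde{y}+\widetilde{z}-\widetilde{(y+z)})=-C(\widetilde{x},L(y,z))=h(x,y,z)$, which is the first equation. For the second, bilinearity in the first slot gives $c(x+y,z)-c(x,z)-c(y,z)=C(L(x,y),\widetilde{z})$, while $h(z,x,y)=-C(\widetilde{z},L(x,y))$; these agree precisely when $B(L(x,y),\widetilde{z})=0$. But $L(x,y)\in R=\ker\pi$, so the symmetrization identity yields $B(L(x,y),\widetilde{z})=b(\pi L(x,y),z)=b(0,z)=0$, and the second equation follows. (Consistency is reassuring: the attached quadratic form is $c(x,x)=C(\widetilde{x},\widetilde{x})=q(x)$, with polarization $c(x,y)+c(y,x)=B(\widetilde{x},\widetilde{y})=b(x,y)$.)

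The first identity is thus essentially free, a formal consequence of bilinearity; the real content, and the step I expect to be the main obstacle, is the second. Its proof hinges on two points that must be checked with care: that one may genuinely choose $C$ \emph{bilinear} within the constraints of an optimal admissible presentation (for a general, possibly infinite, abelian group $G$ this forces honest bookkeeping with a well-ordered basis of $F_{0}$ and a compatible lifting, so that all sums stay finitely supported), and that the symmetrization identity $B=\pi^{\ast}b$, together with $L(x,y)\in\ker\pi$, is exactly what annihilates the two ``extra'' associator terms a naive hexagon would otherwise leave behind. Once the bilinear $C$ is in place, the algebraic heart of the argument is the one-line computation $b(\pi L(x,y),z)=0$.
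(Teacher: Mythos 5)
Your overall strategy is the paper's: reduce to a skeletal model, produce one cocycle representative of the class of $q$ via the machine of Theorem~\ref{thm_GeneralFormulaWithAP}, and verify the two identities by bilinearity of $C$ together with $B=\pi^{\ast}b$ and $L(x,y)\in\ker\pi$. Your two verification computations are correct and coincide with the paper's (proof of Theorem~\ref{thm_StdForm}). But there is a genuine gap earlier, in the construction of the optimal \emph{admissible} presentation, and you have misdiagnosed where the difficulty sits. The upper-triangular bilinear form on a well-ordered basis of a free presentation $F_{0}\twoheadrightarrow G$ gives, for a general $G$, only an optimal \emph{pre}-admissible presentation: it satisfies $C(v,v)=0$ for $v\in F_{1}$ but need not satisfy $C(v,w)=0$ for all $v,w\in F_{1}$. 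The paper exhibits this failure explicitly (the example after Theorem~\ref{thm_AbstractExistence}, with $\mathbb{Z}^{n-1}\to\mathbb{Z}^{n}\to\mathbb{Z}$ and $C(e_{i}-e_{k},e_{j}-e_{k})=1$). The Quinn-formula argument you invoke for admissibility (Step~3 of Lemma~\ref{lemma_ConstructAP}) works only because there $F_{1}$ is generated by the diagonal elements $n_{i}e_{i}$; a general abelian group $G$ is not a direct sum of cyclic groups, so no such diagonal presentation is available. This is not a matter of ``bookkeeping with finitely supported sums.''

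The gap matters because the strong form of axiom~(3), i.e. $C\mid_{F_{1}\times F_{1}}=0$, is exactly what makes $C(L(u,x),L(y,z))=0$ (Lemma~\ref{lemma_LSymmetries}(4)), which is the last step in verifying that $h$ satisfies the pentagon identity (Step~3 of Lemma~\ref{lemma_E2}). Without it your $(h,c)$ need not be an abelian $3$-cocycle at all, so the skeletal model is not a braided monoidal category. The paper repairs pre-admissibility to admissibility using that $M=k^{\times}$ is \emph{divisible}, hence injective as a $\mathbb{Z}$-module: one extends the alternating form $C\mid_{F_{1}}$ from $\operatorname{Alt}^{2}(F_{1})$ to an alternating form $A$ on $\operatorname{Alt}^{2}(F_{0})$ and replaces $C$ by $C-A$ (Lemma~\ref{lemma_DivisibleAndFreeCaseMakePreAdmAdmissible}, packaged as Theorem~\ref{thm_AbstractExistence}(2)). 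In particular your parenthetical claim that algebraic closedness of $k$ is used ``only'' to identify $\operatorname{Aut}(1_{\mathsf{C}})=k^{\times}$ is wrong: it is also needed to guarantee that $k^{\times}$ is divisible, which is the actual engine behind the existence of an admissible presentation for arbitrary $G$. Once you insert this correction step, the rest of your argument goes through and agrees with the paper's proof.
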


See Theorem \ref{thm_AnnFull}.\medskip

These properties are \textquotedblleft extra symmetries\textquotedblright%
\ which are not visibly forced by the hexagon and pentagon axioms. I do not
have a philosophical interpretation why such extra symmetries always exist
(e.g., note that it follows from them that $a_{X,Y,Z}=a_{X,Z,Y}$).\medskip

To restate the result in other words: The associator only measures the lack of
\textquotedblleft$\otimes$-linearity\textquotedblright\ of the braiding, in
either variable. A tool to memorize the formulas: the first argument of the
associator is the one argument which appears in all three factors on the other
side of the equality sign.\medskip

The above result follows from Quinn's formula if $(\mathsf{C},\otimes)$ is an
ordinary pointed braided fusion category with $G$ finite. We believe the above
observation is new in the case of arbitrary $G$. It does not follow by a
\textquotedblleft colimit argument\textquotedblright\ from the case of finite
$G$ by the same problem as discussed around Figure \ref{l_fig1}. And at any
rate our argument takes a different path and circumvents Quinn's formula or
its siblings.

\section{Abelian cohomology}

Let us recall Eilenberg and\ MacLane's theory of abelian cohomology. We
refrain from giving a careful motivation how this formalism arose. Instead, we
may refer to \cite[\S 3.1]{bcg} for some more background.

Let $G,M$ be abelian groups. While there are elegant and systematic
definitions of group cohomology and abelian cohomology, we will just work with
an explicit presentation here, namely normalized inhomogeneous cochains. Also,
we shall only need $H^{3}$.

Write $G^{n}:=G\times\cdots\times G$ for the $n$-fold product of abelian
groups. A \emph{group }$3$\emph{-cocycle} is a map of sets%
\[
h:G^{3}\longrightarrow M
\]
such that the identity%
\begin{equation}
h(x,y,z)+h(u,x+y,z)+h(u,x,y)=h(u,x,y+z)+h(u+x,y,z)\text{.} \label{lefmu1a}%
\end{equation}
holds for all $x,y,z\in G$ (corresponding in tensor category language to the
\textquotedblleft pentagon axiom for associators\textquotedblright, see
\S \ref{sect_AssociatorsAndBraidings}, e.g., the proof of Theorem
\ref{thm_JoyalStreetEquivBCGAndQuadTriples}).

A group $3$-cocycle is called \emph{normalized} if $h(x_{1},x_{2},x_{3})=0$ as
soon as $x_{i}=0$ for some $i\in\{1,2,3\}$. A \emph{normalized group }%
$3$\emph{-coboundary} is a group $3$-cocycle of the shape%
\begin{equation}
h(x,y,z)=k(y,z)-k(x+y,z)+k(x,y+z)-k(x,y) \label{lefmu2}%
\end{equation}
for some map of sets $k:G^{2}\rightarrow M$ such that $k(x,0)=0$ and
$k(0,y)=0$. It is easy to check that this is a normalized group $3$-cocycle.
These explicit expressions can directly be unravelled from \cite[Chapter I,
\S 2]{MR2392026} for example.

An \emph{abelian }$3$\emph{-cocycle} is a pair $(h,c)$ consisting of a group
$3$-cocycle $h:G^{3}\rightarrow M$ such that%
\begin{equation}
h(x,0,z)=0 \label{lefmu2aa}%
\end{equation}
and a map $c:G^{2}\rightarrow M$ which satisfies%
\begin{align}
h(y,z,x)+c(x,y+z)+h(x,y,z)  &  =c(x,z)+h(y,x,z)+c(x,y)\tag{A}\label{lx_a_1}\\
-h(z,x,y)+c(x+y,z)-h(x,y,z)  &  =c(x,z)-h(x,z,y)+c(y,z) \tag{A'}\label{lx_a_2}%
\end{align}
for all $x,y,z\in G$ (corresponding to the two \textquotedblleft hexagon
axioms\textquotedblright\ in the dictionary with tensor categories). Equation
\ref{lefmu2aa} implies that $h$ is normalized (\cite[Remark 3.5]{bcg}). An
\emph{abelian }$3$\emph{-coboundary}\textit{ }is a pair $(h,c)$, where $h$ is
a normalized group $3$-coboundary coming from $k:G^{2}\rightarrow M$, and%
\begin{equation}
c(x,y):=k(x,y)-k(y,x) \label{lefmu2a}%
\end{equation}
for the same $k$. Write $Z_{grp}^{3}$ (resp. $Z_{ab}^{3}$) to denote the group
of normalized group $3$-cocycles (resp. abelian $3$-cocycles), resp.
$B_{grp}^{3}$ and $B_{ab}^{3}$ for coboundaries.

\begin{definition}
We have third \emph{group cohomology}%
\[
H_{grp}^{3}(G,M)=\frac{Z_{grp}^{3}(G,M)}{B_{grp}^{3}(G,M)}=\frac
{\{\text{(normalized) group }3\text{-cocycles}\}}{\{\text{(normalized) group
}3\text{-coboundaries}\}}%
\]
and third \emph{abelian cohomology}%
\[
H_{ab}^{3}(G,M)=\frac{Z_{ab}^{3}(G,M)}{B_{ab}^{3}(G,M)}=\frac{\{\text{abelian
}3\text{-cocycles}\}}{\{\text{abelian }3\text{-coboundaries}\}}%
\]

\end{definition}

For both definitions we use normalized inhomogeneous chains, cf. \cite[Chapter
I, \S 2, Exercise 5]{MR2392026}.\medskip

By a \emph{quadratic form} $q:G\rightarrow M$ (also known as `quadratic map'
or `quadratic function' in various texts, depending on the taste of the
various authors) we mean a map of sets such that $q(x)=q(-x)$ and%
\begin{equation}
b(x,y)=q(x+y)-q(x)-q(y) \label{ljbc1}%
\end{equation}
is $\mathbb{Z}$-bilinear for all $x,y\in G$. The map $b$ is known as the
\emph{polarization form}. Write $\operatorname*{Quad}(G,M)$ for the set of all
quadratic forms. This is an abelian group under pointwise addition of maps.

\begin{example}
This definition may encompass more types of maps than a casual reader might
expect. For example if $G$ and $M$ happen to be $\mathbb{F}_{2}$-vector
spaces, every \emph{linear} map $G\rightarrow M$ is a quadratic form.
Concretely, the map%
\[
q:\mathbb{F}_{8}[X,Y]\longrightarrow\mathbb{F}_{8}[X,Y]\text{,}\qquad
q(x)=x+x^{2}+x^{4}%
\]
might not `look quadratic' as an algebraic expression, but it is a quadratic
form. Thanks to $(a+b)^{2}=a^{2}+b^{2}$ in characteristic two rings, the
polarization vanishes.
\end{example}

\begin{example}
\label{example_QuadScalar}If $q$ is any quadratic form, we have $q(nx)=n^{2}%
q(x)$ for any $x\in G$ and $n\in\mathbb{Z}$. To see this, note that%
\begin{align}
b(x,x)  &  =q(2x)-2q(x)\nonumber\\
b(x,-x)  &  =q(0)-q(x)-q(-x)=-2q(x) \label{lwaa1}%
\end{align}
both follow from Equation \ref{ljbc1}. The case $n=0$ is clear. By induction,
assuming the case $n$ to be done,%
\[
b(nx,x)=q(nx+x)-q(nx)-q(x)=q((n+1)x)-n^{2}q(x)-q(x)
\]
and by the $\mathbb{Z}$-bilinearity of $b$ and Equation \ref{lwaa1},
$b(nx,x)=-nb(x,-x)=2nq(x)$, and then%
\[
(n^{2}+2n+1)q(x)=q((n+1)x)\text{,}%
\]
proving the claim for all $n\geq0$. It follows for negative $n$ by
$q(-x)=q(x)$.
\end{example}

The key connection between abelian $3$-cocycles and quadratic forms is the
following theorem.

\begin{theorem}
[Eilenberg--Mac Lane]\label{thm_EilenbergMacLaneIso1}Let $G,M$ be abelian
groups. The so-called \emph{trace}%
\begin{align*}
\operatorname*{tr}:H_{ab}^{3}(G,M)  &  \longrightarrow\operatorname*{Quad}%
(G,M)\\
(h,c)  &  \longmapsto(x\mapsto c(x,x))
\end{align*}
is an isomorphism of abelian groups.
\end{theorem}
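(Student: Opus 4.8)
The plan is to establish in turn that $\operatorname{tr}$ is well-defined as a map into $\operatorname{Quad}(G,M)$, that it is additive and annihilates abelian $3$-coboundaries, and finally that it is bijective; the injectivity step will be the hard part. For \emph{well-definedness} I would first show that $q(x):=c(x,x)$ is a quadratic form. Feeding $y=z=0$ into hexagon (A) and using that $h$ is normalized (Equation \ref{lefmu2aa} together with \cite[Remark 3.5]{bcg}) forces $c(x,0)=c(0,y)=0$. Solving (A) and (A') for the additivity defects of $c$ gives
\[
c(x,y+z)-c(x,y)-c(x,z)=h(y,x,z)-h(y,z,x)-h(x,y,z)
\]
and the analogous formula for $c(x+y,z)-c(x,z)-c(y,z)$ out of (A'). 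Expanding $c(x+y,x+y)$ by these two identities and then invoking the pentagon relation (Equation \ref{lefmu1a}) to cancel the resulting $h$-terms, I expect to reach
\[
b(x,y):=q(x+y)-q(x)-q(y)=c(x,y)+c(y,x).
\]
Adding the two defect identities (with arguments matched appropriately) cancels the $h$-terms directly and yields $b(x,y+z)=b(x,y)+b(x,z)$; since $b$ is visibly symmetric, this gives $\mathbb{Z}$-bilinearity. Finally $q(-x)=q(x)$ follows from $q(0)=c(0,0)=0$ together with $b(x,-x)=-b(x,x)=-2q(x)$.

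The \emph{homomorphism} property is immediate, since abelian $3$-cocycles add componentwise and $x\mapsto(c_1+c_2)(x,x)$ is the sum of the two traces. For an abelian $3$-coboundary we have $c(x,y)=k(x,y)-k(y,x)$, so $c(x,x)=0$; hence $\operatorname{tr}$ annihilates $B_{ab}^{3}(G,M)$ and descends to $H_{ab}^{3}(G,M)$. For \emph{surjectivity} I would simply invoke Theorem \ref{thmann_MainCocycleProducer}, which for any $q\in\operatorname{Quad}(G,M)$ produces an explicit abelian $3$-cocycle $(h,c)$ whose attached quadratic form is $q$, i.e.\ with $c(x,x)=q(x)$. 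This is exactly a set-theoretic section of $\operatorname{tr}$, so surjectivity needs no separate argument.

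The remaining and genuinely difficult point is \emph{injectivity}. Because a section exists, injectivity is equivalent to showing that every abelian $3$-cocycle with $\operatorname{tr}(h,c)=0$ is an abelian $3$-coboundary. Assume $c(x,x)=0$; then the computation above gives $b=0$, so $c$ is alternating, $c(y,x)=-c(x,y)$. The task is to produce a single normalized $k:G^{2}\to M$ with both $h=\partial k$ (the group $3$-coboundary of Equation \ref{lefmu2}) and $k(x,y)-k(y,x)=c(x,y)$. In the torsion-free directions this is straightforward: fixing an ordered basis $(e_{j})$, one checks the relevant $h$-terms can be made to vanish and sets $k(x,y):=\sum_{i<j}x_{i}y_{j}\,c(e_{i},e_{j})$, which is bilinear, satisfies $\partial k=0$, and antisymmetrizes back to $c$.

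The obstacle is that in the presence of torsion $h$ need not itself be a group $3$-coboundary, so one cannot decouple the two requirements $h=\partial k$ and $c(x,y)=k(x,y)-k(y,x)$ — they must be solved by one and the same $k$. I would resolve this with the admissible-presentation and admissible-lifting bookkeeping of \S\ref{sect_AdmissiblePresentation}--\S\ref{sect_Lift}: integrate the alternating form $c$ along the chosen ordering to define $k$ on generators, then correct it by the non-linear term $L(x,y)=\widetilde{(x+y)}-\widetilde{x}-\widetilde{y}$ so that the coboundary $\partial k$ absorbs exactly the torsion-induced contribution of $h$. Verifying that this $k$ simultaneously reproduces $h$ and antisymmetrizes to $c$, with all normalizations $k(x,0)=k(0,y)=0$ intact, is where the real work lies and is the step I expect to be the main difficulty.
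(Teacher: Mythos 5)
Your outline of well-definedness and the homomorphism/coboundary step matches the paper in substance (the paper outsources the computation that $x\mapsto c(x,x)$ is quadratic to a cited lemma), but two of your steps have genuine problems.

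First, surjectivity. You ``simply invoke'' Theorem \ref{thmann_MainCocycleProducer}, but that theorem is conditional: it presupposes that an \emph{optimal admissible presentation} of $q$ exists. For arbitrary $G$ and arbitrary $M$ this is exactly what the paper does \emph{not} know how to guarantee --- Theorem \ref{thm_AbstractExistence} gives optimal admissible presentations only when $M$ is divisible, and the unconditional existence is posed as an open Problem at the end of \S4. The paper's actual surjectivity argument routes around this: for $G$ a direct sum of cyclic groups (in particular finitely generated), Lemma \ref{lemma_ConstructAP} constructs the presentation explicitly and the generalized Quinn formula (Theorem \ref{thm_QuinnFormula}) gives a homomorphic cocycle-level section; for general $G$ one writes $G$ as the filtered colimit of its finitely generated subgroups and uses that these sections are compatible with restriction, so one gets surjectivity in the limit. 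Without this detour your surjectivity claim is not established. (A smaller issue in the well-definedness step: your derivation of $q(-x)=q(x)$ from $b(x,-x)=-b(x,x)$ is circular, since $b(x,x)=2q(x)$ is itself equivalent to $q(2x)=4q(x)$, which you have not yet proved; one needs an extra identity from the cocycle axioms here.)

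Second, injectivity. You correctly identify it as the hard part, but what you offer is a plan, not a proof: the simultaneous solution of $h=\partial k$ and $c(x,y)=k(x,y)-k(y,x)$ in the presence of torsion is precisely the point you concede you have not carried out. The paper does not reprove injectivity either; it explicitly defers to Joyal--Street's argument (check $G=\mathbb{Z}$ and $G=\mathbb{Z}/n$ by hand, use that both $H^{3}_{ab}(-,M)$ and $\operatorname*{Quad}(-,M)$ are quadratic functors to handle finitely generated $G$, then pass to the colimit). As it stands your proposal leaves injectivity open, so the theorem is not proved. If you want a self-contained argument you should either reproduce the quadratic-functor reduction or actually construct the cochain $k$ in the cyclic torsion case, where the obstruction you describe is concentrated.
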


We give an outline how this is proven in
\S \ref{sect_ProofOfEilenbergMacLaneThm}, including a new proof of surjectivity.

\section{Admissible presentations\label{sect_AdmissiblePresentation}}

Let $G,M$ be abelian groups. Suppose $q\in\operatorname*{Quad}(G,M)$ is a
quadratic form. We write $b$ for the polarization form of $q$ as given in
Equation \ref{ljbc1}.

\begin{definition}
\label{def_AP}A \emph{pre-admissible presentation} for $q$ is a triple
$(F_{0},\pi,C)$, where

\begin{enumerate}
\item $F_{0}$ is an abelian group and $\pi$ a surjective group homomorphism%
\[
\pi:F_{0}\twoheadrightarrow G\text{;}%
\]
and write $F_{1}:=\ker(\pi)$;

\item $C$ is a $\mathbb{Z}$-bilinear form $C:F_{0}\otimes_{\mathbb{Z}}%
F_{0}\rightarrow M$ such that%
\begin{equation}
b(\pi x,\pi y)=C(x,y)+C(y,x) \label{ljbc5s}%
\end{equation}
holds for all $x,y\in F_{0}$.

\item For all $x\in F_{1}$ we have $C(x,x)=0.$
\end{enumerate}

We speak of an \emph{admissible presentation} when instead of (3) we have the
stronger property that $C(x,y)=0$ holds for all $x,y\in F_{1}$.
\end{definition}

Axiom (3) just demands that the restriction $C\mid_{F_{1}}$ is an alternating
form. For an admissible presentation, $F_{1}$ is an isotropic
subgroup.\medskip

Given a pre-admissible presentation, we can lift the quadratic form from $G$
to $F_{0}$. To this end, we define%
\begin{equation}
Q(x):=q(\pi(x))\qquad\text{for}\qquad x\in F_{0}\text{.} \label{ljbc2a}%
\end{equation}
Then $Q\in\operatorname*{Quad}(F_{0},M)$ is indeed a quadratic form. Here and
henceforth write%
\begin{equation}
B(x,y):=Q(x+y)-Q(x)-Q(y) \label{ljbc2}%
\end{equation}
for its polarization form. Note that%
\begin{equation}
B(x,y)=b(\pi x,\pi y)\text{.} \label{ljbc4}%
\end{equation}
We have chosen our notation so that the uppercase letters refer to the lifts
of their lowercase letter counterpart.

There is a slightly more refined property one can demand (and always arrange)
to hold:

\begin{definition}
We call a (pre-)admissible presentation \emph{optimal} if we have%
\begin{equation}
Q(x)=C(x,x) \label{ljbc4r}%
\end{equation}
for all $x\in F_{0}$.
\end{definition}

\begin{example}
The simplest example of a non-optimal admissible presentation is for the
quadratic form $q\in\operatorname*{Quad}(\mathbb{F}_{2},\mathbb{F}_{2})$ given
by $q(x)=x^{2}$. For this form $(\mathbb{F}_{2},\operatorname*{id}%
_{\mathbb{F}_{2}},C)$ with $C(x,y):=0$ is a non-optimal admissible
presentation with $F_{1}=0$. An optimal presentation is given by $C(x,y):=xy$.
\end{example}

Starting with any pre-admissible presentation $(F_{0},\pi,C)$, in order to
achieve optimality, one only needs to change the bilinear form $C$, while
$F_{0}$ and $\pi$ can remain the same. We prove this now.

\begin{proposition}
\label{Prop_OptimizeAP}Let $G,M$ be abelian groups and $q\in
\operatorname*{Quad}(G,M)$ a quadratic form.

\begin{enumerate}
\item If $(F_{0},\pi,C)$ is a pre-admissible presentation, one can find an
optimal pre-admissible presentation $(F_{0},\pi,C^{\prime})$.

\item If $(F_{0},\pi,C)$ is an admissible presentation, one can find an
optimal admissible presentation $(F_{0},\pi,C^{\prime})$.
\end{enumerate}
\end{proposition}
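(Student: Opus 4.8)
The plan is to keep $F_0$ and $\pi$ fixed and adjust only the bilinear form, writing $C'=C+D$ for an as yet unknown $\mathbb Z$-bilinear form $D\colon F_0\otimes_{\mathbb Z}F_0\to M$. Demanding that $(F_0,\pi,C')$ again satisfy Equation \ref{ljbc5s} is precisely the condition that $D(x,y)+D(y,x)=0$ for all $x,y$, i.e. that $D$ be antisymmetric; and optimality $Q(x)=C'(x,x)$ becomes the single diagonal condition $D(x,x)=g(x)$, where I set $g(x):=Q(x)-C(x,x)$. So both parts reduce to producing an antisymmetric bilinear form with a prescribed diagonal equal to $g$.

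The key observation is that $g$ is very special. The diagonal $x\mapsto C(x,x)$ is itself a quadratic form, and its polarization is $C(x,y)+C(y,x)=b(\pi x,\pi y)=B(x,y)$ by Equation \ref{ljbc5s}, which is also the polarization of $Q$. Hence $g=Q-C(\cdot,\cdot)$ lies in $\operatorname{Quad}(F_0,M)$ with vanishing polarization, so $g$ is additive. Combining additivity with $g(2x)=4g(x)$ from Example \ref{example_QuadScalar} forces $2g(x)=0$ for all $x$. Thus $g$ is a homomorphism into the $2$-torsion $M[2]$ that kills $2F_0$, and therefore factors through the $\mathbb F_2$-vector space $V:=F_0/2F_0$ as an $\mathbb F_2$-linear map $\overline{g}\colon V\to M[2]$.

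This linearization is what makes an antisymmetric $D$ with diagonal $g$ findable, and I would build $D$ by pulling back an $\mathbb F_2$-bilinear form from $V$. Choose an $\mathbb F_2$-basis $(v_i)_{i\in I}$ of $V$, define the diagonal symmetric form $\overline{D}(v_i,v_j):=\delta_{ij}\,\overline{g}(v_i)$, and set $D(x,y):=\overline{D}(\overline{x},\overline{y})$ with $\overline{x}$ the image of $x$ in $V$. Since $\overline{D}$ takes values in $M[2]$, its symmetrization is $2\overline{D}=0$, so $D$ is antisymmetric into $M$; and a direct computation on the orthogonal basis gives $D(x,x)=\overline{g}(\overline{x})=g(x)$. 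Then $C':=C+D$ is an optimal pre-admissible presentation, where axiom (3) is automatic from optimality since $C'(x,x)=Q(x)=q(0)=0$ for $x\in F_1$; this proves (1). For (2) the extra requirement is $D|_{F_1\times F_1}=0$, since $C|_{F_1\times F_1}=0$ by admissibility. Here one first notes that admissibility forces $g$ to vanish on $F_1$, because for $x\in F_1$ one has $g(x)=q(\pi x)-C(x,x)=0$; hence the image $\overline{F_1}\subseteq V$ lies in $\ker\overline{g}$. I would then choose the basis compatibly, extending a basis of $\overline{F_1}$ to a basis of $V$: all basis vectors spanning $\overline{F_1}$ carry zero $\overline{g}$-value, so the diagonal form $\overline{D}$ vanishes identically on $\overline{F_1}\times\overline{F_1}$, giving $D|_{F_1\times F_1}=0$ and an optimal admissible presentation.

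The only genuine content is the reduction in the second paragraph, namely recognizing that $g$ is additive and $2$-torsion-valued; once this is in hand the construction of $D$ is pure $\mathbb F_2$-linear algebra. The main point to be careful about is that $F_0$ is an \emph{arbitrary} abelian group, not necessarily free, which is exactly why passing to the $\mathbb F_2$-vector space $V=F_0/2F_0$ — where bases and basis extensions are always available — is the right move, rather than attempting to work with a $\mathbb Z$-basis of $F_0$ directly.
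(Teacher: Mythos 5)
Your proposal is correct and follows essentially the same route as the paper: both recognize that the discrepancy $g=Q-C(\cdot,\cdot)$ is an additive, $2$-torsion-valued map descending to an $\mathbb{F}_2$-linear map on $F_0/2F_0$, and both correct $C$ by the pullback of a diagonal form built from a basis of $F_0/2F_0$ extending a basis of the image of $F_1$ (your $D$ is the paper's $-J$, which coincides with $J$ since $2J=0$). Your observation that axiom (3) in part (1) follows automatically from optimality is a slight streamlining of the paper's more explicit verification, but the argument is the same.
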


Below, we write $\left.  _{n}M\right.  :=\{m\in M\mid nm=0\}$ for the
$n$-torsion subgroup.

\begin{proof}
\textit{(Step 1) }We first prove the first claim. It is immediate to see that
$Q^{\prime}(x):=C(x,x)$ is a quadratic form on $F_{0}$. Its polarization form
is%
\[
B^{\prime}(x,y)=C(x+y,x+y)-C(x,x)-C(y,y)=C(x,y)+C(y,x)
\]
and by Equation \ref{ljbc5s} this is $b(\pi x,\pi y)$, so $Q^{\prime}$ has the
same polarization as $Q$ by Equation \ref{ljbc2}-\ref{ljbc4}. Thus,%
\begin{equation}
L:=Q^{\prime}-Q \label{lwdxs1}%
\end{equation}
is a quadratic form in $\operatorname*{Quad}(F_{0},M)$ whose polarization
vanishes. This means that $L$ satisfies%
\begin{equation}
L(x+y)-L(x)-L(y)=0 \label{lwdaa1}%
\end{equation}
for all $x,y\in F_{0}$, so $L$ is a semigroup homomorphism. Take $x=y=0$ to
obtain $L(0)=0$, and $y=-x$ to obtain $L(-x)=-L(x)$, showing that $L$ is a
morphism of abelian groups,%
\[
L:F_{0}\longrightarrow M\text{.}%
\]
As a quadratic form, it also satisfies $L(x)=L(-x)$, i.e. $2L(x)=0$ holds for
all $x\in F_{0}$. Thus, $L$ descends to a group homomorphism $L:F_{0}%
/2F_{0}\rightarrow\left.  _{2}M\right.  $. The $\mathbb{Z}$-module structure
of either side now induces an $\mathbb{F}_{2}$-vector space structure,
rendering $L$ an $\mathbb{F}_{2}$-linear map. We next choose a special basis
of $F_{0}/2F_{0}$. To this end, pick a direct sum splitting%
\begin{equation}
F_{0}/2F_{0}\simeq\operatorname*{im}(F_{1})\oplus\text{(rest),} \label{lwdxs3}%
\end{equation}
where we refer to the image coming from the inclusion $F_{1}\subseteq F_{0}$.
Pick $(\gamma_{i})_{i\in I}$ as a basis of $F_{0}/2F_{0}$ first by picking a
basis on the subspace $\operatorname*{im}(F_{1})$, say with indices in a
subset $I_{\operatorname*{im}F_{1}}\subseteq I$ of the index set, and then
prolong it to all of $F_{0}/2F_{0}$. Define a symmetric bilinear form on
$F_{0}/2F_{0}$ by%
\begin{equation}
J(x,y):=\sum_{i\in I}\overline{x}_{i}\overline{y}_{i}L(\gamma_{i})\text{,}
\label{lwdxs2}%
\end{equation}
where $\overline{x}_{i}\in\mathbb{F}_{2}$ denotes the $\mathbb{F}_{2}%
$-coordinates of the vector $x\in F_{0}/2F_{0}$ (resp. $\overline{y}_{i}$ for
$y$) with respect to the basis $(\gamma_{i})_{i\in I}$. As each $L(x)$ lies in
the $2$-torsion group $\left.  _{2}M\right.  $, the scalar multiplication with
elements from $\mathbb{F}_{2}$ is well-defined and indeed linear. For an
arbitrary $x\in F_{0}/2F_{0}$ we compute%
\[
J(x,x)=\sum_{i}\overline{x}_{i}^{2}L(\gamma_{i})\equiv\sum_{i}\overline{x}%
_{i}L(\gamma_{i})=L\left(  \sum_{i}\overline{x}_{i}\gamma_{i}\right)  =L(x)
\]
since in $\mathbb{F}_{2}$ we have $\alpha^{2}\equiv\alpha\,\left(
\operatorname{mod}2\right)  $. Under the linear surjection $F_{0}%
\twoheadrightarrow F_{0}/2F_{0}$ we can now lift $J$ to a symmetric bilinear
form%
\[
J:F_{0}\otimes_{\mathbb{Z}}F_{0}\longrightarrow\left.  _{2}M\right.  \text{.}%
\]
We keep the same name $J$ for this lift. Returning to our definition of $L$ in
Equation \ref{lwdxs1}, we now find%
\[
Q(x)=Q^{\prime}(x)-L(x)=C(x,x)-J(x,x)\text{.}%
\]
Since both $C$ and $J$ are $\mathbb{Z}$-bilinear forms, so is $C-J$. Define%
\begin{equation}
C^{\prime}:=C-J\,\text{.} \label{lwdxs5}%
\end{equation}
We thus have $Q(x)=C^{\prime}(x,x)$, so Equation \ref{ljbc4r} holds, showing
that $C^{\prime}$ is a promising candidate to satisfy the optimality
property.\newline\textit{(Step 2) }We need to check that $(F_{0},\pi
,C^{\prime})$ is a pre-admissible presentation: Axiom (1) is clear; nothing
about $F_{1}$ or $\pi$ has changed. For axiom (2) we find%
\[
C^{\prime}(x,y)+C^{\prime}(y,x)=C(x,y)-J(x,y)+C(y,x)-J(y,x)=B(x,y)-2J(x,y)
\]
since $C$ satisfies axiom (2) by assumption and $J$ is a symmetric form.
However, $J$ by construction takes values in the $2$-torsion elements $\left.
_{2}M\right.  $, so $2J(x,y)=0$ for any $x,y$. Hence, axiom (2) is satisfied.
Axiom (3): From Equation \ref{lwdxs1} and Equation \ref{ljbc2a} we get%
\[
L(x)=Q^{\prime}(x)-Q(x)=C(x,x)-q(\pi x)\text{.}%
\]
Thus, if $x\in F_{1}$ then $C(x,x)=0$ since $C$ satisfies axiom (3), and
$q(\pi x)=0$ since $F_{1}=\ker(\pi_{1})$. We deduce that $L\mid_{F_{1}}=0$.
Next, recall from Equation \ref{lwdxs2} that $J$ was defined by%
\[
J(x,y)=\sum_{i\in I}\overline{x}_{i}\overline{y}_{i}L(\gamma_{i})\text{,}%
\]
where $\overline{x}_{i},\overline{y}_{i}$ were the respective $\mathbb{F}_{2}%
$-coordinates. If $x,y\in F_{1}$, then thanks to our special choice of basis
from Equation \ref{lwdxs3}, we have $\overline{x}_{i}=0$ for all $i\in
I\setminus I_{\operatorname*{im}F_{1}}$, and the same for $\overline{y}_{i}$.
Thus, for $x,y\in F_{1}$ we have%
\[
J(x,y)=\sum_{i\in I_{\operatorname*{im}F_{1}}}\overline{x}_{i}\overline{y}%
_{i}L(\gamma_{i})=0
\]
since for $i\in I_{\operatorname*{im}F_{1}}$ we know that $\gamma_{i}$ lies in
the image of $F_{1}$ inside $F_{0}/2F_{0}$, but we had found above that
$L\mid_{F_{1}}=0$. Thus, $J(x,y)=0$ for all $x,y\in F_{1}$. Now suppose $x\in
F_{1}$. Then by Equation \ref{lwdxs5} we have%
\begin{equation}
C^{\prime}(x,x)=C(x,x)-J(x,x) \label{lwdxs4}%
\end{equation}
and this vanishes since $C(x,x)=0$ (as axiom (3)\ holds for $C$) and we had
already observed $J\mid_{F_{1}}=0$. This shows that axiom (3) holds for
$C^{\prime}$. This shows that $(F_{0},\pi,C^{\prime})$ is an optimal
pre-admissible presentation (we had shown optimality in Step 1). This finishes
the proof of the first claim. Finally, we want to show that if $(F_{0},\pi,C)$
was an admissible presentation to start with, so is $(F_{0},\pi,C^{\prime}%
)$.\ We already know the latter is optimal and pre-admissible. Now, as a
direct variation of Equation \ref{lwdxs4}, for $x,y\in F_{1}$ we get%
\[
C^{\prime}(x,y)=C(x,y)-J(x,y)
\]
and since $C$ was admissible, $C(x,y)=0$, and we had already observed
$J(x,y)=0$ above.
\end{proof}

\section{Existence theorems for admissible presentations}

We begin with the principal construction mechanism for pre-admissible
presentations. This is a good construction whenever the quadratic form comes
from a bilinear form, even if this is perhaps not possible on $G$, but only on
a bigger group.

\begin{lemmaconst}
\label{lemma_AP_QuadFormFromBilinearForm}Suppose $G$ and $M$ are arbitrary
abelian groups.

\begin{enumerate}
\item Assume one finds an abelian group $F_{0}$ with a surjection%
\[
\pi:F_{0}\twoheadrightarrow G
\]
such that on $F_{0}$ one can exhibit a bilinear form $C$ such that $q(\pi
x)=C(x,x)$ (\textquotedblleft the lift of $q$ comes from a bilinear
form\textquotedblright). Then $(F_{0},\pi,C)$ is an optimal pre-admissible presentation.

\item If one can take $F_{0}=G$ and $\pi=\operatorname*{id}_{G}$, then
$(G,\operatorname*{id}_{G},C)$ is an optimal admissible presentation and
$F_{1}=0$.
\end{enumerate}
\end{lemmaconst}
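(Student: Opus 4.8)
The plan is to verify, one axiom at a time, that $(F_0,\pi,C)$ meets Definition \ref{def_AP}, together with the optimality condition of Equation \ref{ljbc4r}; the hypothesis is arranged so that each check is essentially immediate. The key observation is that the assumption $q(\pi x)=C(x,x)$ is, by the definition $Q(x):=q(\pi(x))$ in Equation \ref{ljbc2a}, literally the optimality equation $Q(x)=C(x,x)$. So optimality comes for free, and, as I will exploit below, it also drives the verification of axiom (3).

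For part (1) it then remains to confirm axioms (1)--(3). Axiom (1) is part of the hypothesis. For axiom (2) I would compute the polarization of $Q$ directly from bilinearity of $C$,
\[
B(x,y)=Q(x+y)-Q(x)-Q(y)=C(x+y,x+y)-C(x,x)-C(y,y)=C(x,y)+C(y,x),
\]
and then invoke $B(x,y)=b(\pi x,\pi y)$ from Equation \ref{ljbc4} to obtain exactly the required identity of Equation \ref{ljbc5s}. For axiom (3) I would use optimality once more: for $x\in F_1=\ker(\pi)$ we have $\pi x=0$, so $C(x,x)=Q(x)=q(\pi x)=q(0)=0$, which is precisely the vanishing of $C$ on the diagonal of $F_1$.

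For part (2), setting $F_0=G$ and $\pi=\operatorname{id}_G$ forces $F_1=\ker(\operatorname{id}_G)=0$; the stronger admissibility condition $C(x,y)=0$ for all $x,y\in F_1$ is then vacuous, so the presentation is \emph{admissible}, not merely pre-admissible, and optimality and axioms (1)--(2) carry over verbatim from part (1). I do not expect any genuine obstacle: the whole statement is a bookkeeping check against the definitions. The only point deserving care is that in axiom (3) it is optimality itself --- rather than any independent computation --- that forces $C$ to vanish on $\ker(\pi)$, which is exactly why the hypothesis is stated as the pointwise equality $q(\pi x)=C(x,x)$ on all of $F_0$, not merely as a condition matching the polarization.
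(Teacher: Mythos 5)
Your proposal is correct and follows essentially the same route as the paper's proof: axiom (1) is immediate, axiom (2) comes from expanding $C(x+y,x+y)$ by bilinearity and matching it with the polarization of $q$ via $\pi$, axiom (3) and optimality both fall out of the hypothesis $q(\pi x)=C(x,x)$, and part (2) reduces to $F_{1}=0$ making the strong form of axiom (3) vacuous. The only cosmetic difference is that you route the axiom (2) check through $Q$, $B$ and Equation \ref{ljbc4}, whereas the paper writes the same computation directly in terms of $b(\pi x,\pi y)$.
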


\begin{proof}
We begin with the first claim. Axiom (1) is immediate. The polarization form
$b$ of $q$, written in terms of images of elements $x,y$ from $F_{0}$, is%
\begin{align*}
b(\pi x,\pi y)  &  =q(\pi(x+y))-q(\pi x)-q(\pi y)\\
&  =C(x+y,x+y)-C(x,x)-C(y,y)=C(x,y)+C(y,x)\text{,}%
\end{align*}
proving axiom (2). Finally, if $x\in F_{1}$, then since $F_{1}=\ker(\pi)$ we
have $C(x,x)=q(\pi x)=0$, so axiom (3) holds. Optimality holds by
construction. For the second claim, note that $F_{1}=0$, so the strong form of
axiom (3) is clear.
\end{proof}

We should also mention a trivial case, where nothing much needs to be done at all.

\begin{lemmaconst}
Suppose $G$ is arbitrary and $M$ is a $\mathbb{Z}[\frac{1}{2}]$-module. Then
pick $\pi:=\operatorname*{id}_{G}$, i.e. use the presentation%
\[
0\longrightarrow G\overset{\pi}{\longrightarrow}G\longrightarrow0
\]
with%
\[
C(x,y):=\frac{1}{2}b(\pi x,\pi y)\text{.}%
\]
This is an optimal admissible presentation.
\end{lemmaconst}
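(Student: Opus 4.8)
The plan is to verify directly the three conditions of Definition \ref{def_AP} together with the optimality condition for the triple $(G, \operatorname{id}_G, C)$, where $C(x,y) := \frac{1}{2} b(x,y)$. First I would observe that $\pi = \operatorname{id}_G$ is trivially surjective, so Axiom (1) holds and $F_1 = \ker(\pi) = 0$. I would also record that $C$ is a well-defined $\mathbb{Z}$-bilinear form: the polarization $b$ is $\mathbb{Z}$-bilinear by the very definition of a quadratic form, and since $M$ is a $\mathbb{Z}[\frac12]$-module, multiplication by $\frac12$ is a legitimate endomorphism of $M$, so post-composing $b$ with it preserves bilinearity.

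Next I would check Axiom (2), which here reads $b(x,y) = C(x,y) + C(y,x)$. The key (elementary) fact is that $b$ is symmetric, which is immediate from $b(x,y) = q(x+y) - q(x) - q(y)$. Hence $C(x,y) + C(y,x) = \frac12 b(x,y) + \frac12 b(y,x) = b(x,y)$, as needed. The strong form of Axiom (3) required for an \emph{admissible} presentation, namely $C(x,y) = 0$ for all $x,y \in F_1$, then holds vacuously because $F_1 = 0$.

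Finally I would verify optimality, $Q(x) = C(x,x)$, which is the only place a genuine computation enters. Here $Q(x) = q(\pi x) = q(x)$ and $C(x,x) = \frac12 b(x,x)$, so I need $b(x,x) = 2q(x)$. This is exactly the content of Example \ref{example_QuadScalar}: from $b(x,x) = q(2x) - 2q(x)$ and $q(2x) = 4q(x)$ one obtains $b(x,x) = 2q(x)$, whence $C(x,x) = q(x) = Q(x)$. I do not expect any real obstacle here: the statement is essentially a bookkeeping check, and the only points needing attention are that the division by $2$ is permitted precisely by the $\mathbb{Z}[\frac12]$-module hypothesis, and that optimality rests on the identity $b(x,x) = 2q(x)$ rather than on any finer structure of $q$.
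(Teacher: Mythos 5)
Your proposal is correct and follows essentially the same route as the paper's proof: both rest on the symmetry of $b$ for axiom (2), the vanishing of $F_1$ for axiom (3), and the identity $b(x,x)=q(2x)-2q(x)=2q(x)$ for optimality. You merely spell out the bookkeeping (well-definedness of $\frac12 b$, surjectivity of $\operatorname{id}_G$) that the paper compresses into ``Immediate.''
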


\begin{proof}
Immediate. Since $b$ is symmetric, so is $C$, take $F_{0}:=G$ and then
$F_{1}=0$. In particular, there is nothing to check for axiom (3). For $x\in
G$ the polarization form yields%
\[
b(x,x)=q(2x)-q(x)-q(x)=4q(x)-2q(x)=2q(x)\text{,}%
\]
i.e. $q(x)=\frac{1}{2}b(x,x)$. Thus, for $x\in F_{0}$ we have $C(x,x)=q(\pi
x)$, so Equation \ref{ljbc4r} holds.
\end{proof}

The hypothesis of $M$ being a $\mathbb{Z}[\frac{1}{2}]$-module is virtually
never satisfied in applications though.

\begin{example}
Suppose $(\mathsf{C},\otimes)$ is any $k$-linear braided fusion category. Then
the group which appears for $M$ in applications (see
\S \ref{sect_AssociatorsAndBraidings}) is $M:=\pi_{1}(\mathsf{C}%
,\otimes)=k^{\times}$ since the tensor unit $1_{\mathsf{C}}$ is a simple
object. For this group to be $2$-divisible one needs that $k$ is closed under
all quadratic field extensions. This is for example satisfied if $k$ is
algebraically closed. However, to be free from $2$-torsion one also needs that
$x^{2}=1$ implies $x=1$, i.e. $+1=-1$. This forces $k$ to be of characteristic two.
\end{example}

Instead, a much more realistic hypothesis is that $M$ is a divisible module.
If $k$ is any algebraically closed field, $k^{\times}$ is a divisible group.

\begin{lemma}
\label{lemma_DivisibleAndFreeCaseMakePreAdmAdmissible}Suppose $G$ is an
arbitrary abelian group and $M$ a divisible abelian group. If $(F_{0},\pi,C)$
is a pre-admissible presentation with $F_{0}$ free abelian, then one can
replace $C$ by a bilinear form $\tilde{C}$ such that $(F_{0},\pi,\tilde{C})$
is an admissible presentation. If the presentation was optimal to start with,
the new $\tilde{C}$ can be taken optimal, too.
\end{lemma}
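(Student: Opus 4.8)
The plan is to look for $\tilde C$ in the form $\tilde C := C + D$ for a suitable bilinear correction term $D:F_0\otimes_{\mathbb Z}F_0\to M$, and to reduce the whole statement to an extension problem for alternating forms. First I would record what $D$ must satisfy. To keep axiom (2) valid for $\tilde C$, using that $C$ already satisfies it, I need $D(x,y)+D(y,x)=0$ for all $x,y\in F_0$, i.e.\ $D$ must be \emph{alternating}; this same condition simultaneously preserves optimality, since then $\tilde C(x,x)=C(x,x)+D(x,x)=C(x,x)$. To upgrade the weak axiom (3) to the admissibility condition $\tilde C\mid_{F_1\times F_1}=0$, I need $D(x,y)=-C(x,y)$ for all $x,y\in F_1$. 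Now axiom (3) says $C(x,x)=0$ on $F_1$, so $C\mid_{F_1\times F_1}$ is an alternating form, and hence so is $-C\mid_{F_1\times F_1}$. Thus the task becomes exactly: \emph{extend the alternating form $-C\mid_{F_1\times F_1}$ on $F_1$ to an alternating form $D$ on all of $F_0$.}

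Next I would phrase this through exterior squares. Alternating bilinear forms $F_0\times F_0\to M$ are the same thing as homomorphisms $\wedge^2 F_0\to M$, and restriction along the inclusion $\iota:F_1\hookrightarrow F_0$ corresponds to precomposition with the induced map $\wedge^2\iota:\wedge^2 F_1\to\wedge^2 F_0$. So I must show that the homomorphism $\psi:\wedge^2 F_1\to M$ representing $-C\mid_{F_1}$ extends along $\wedge^2\iota$. The essential structural input is that $\wedge^2\iota$ is \emph{injective}: since $F_0$ is free abelian, the subgroup $F_1$ is free abelian as well, so $\wedge^2 F_1$ and $\wedge^2 F_0$ are free, in particular torsion-free. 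Tensoring the injection $F_1\hookrightarrow F_0$ with $\mathbb Q$ gives an injection of $\mathbb Q$-vector spaces, whose exterior square is again injective, and this identifies with $(\wedge^2\iota)\otimes\mathbb Q$; torsion-freeness of $\wedge^2 F_1$ then forces $\wedge^2\iota$ to be injective already over $\mathbb Z$.

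Finally I would invoke divisibility. Over the PID $\mathbb Z$ a divisible abelian group is an injective module, so $\operatorname{Hom}_{\mathbb Z}(-,M)$ is exact and carries the injection $\wedge^2\iota$ to a surjection $\operatorname{Hom}_{\mathbb Z}(\wedge^2 F_0,M)\twoheadrightarrow\operatorname{Hom}_{\mathbb Z}(\wedge^2 F_1,M)$. Hence $\psi$ lifts to some $\Psi:\wedge^2 F_0\to M$; letting $D$ be the alternating bilinear form represented by $\Psi$ and setting $\tilde C:=C+D$ completes the construction. It then remains only to read off the axioms: $\tilde C$ is $\mathbb Z$-bilinear as a sum of bilinear forms; axiom (2) and (when $C$ was optimal) the optimality equation $\tilde C(x,x)=Q(x)$ hold because $D$ is alternating; and for $x,y\in F_1$ we get $\tilde C(x,y)=C(x,y)-C(x,y)=0$, which is the admissibility condition.

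The main obstacle is precisely the injectivity of $\wedge^2\iota:\wedge^2 F_1\to\wedge^2 F_0$, which is where the freeness of $F_0$ is genuinely used; without it the extension problem could fail because $\psi$ need not vanish on a kernel. Once that injectivity and the injectivity of the module $M$ (equivalently, its divisibility) are in place, the remaining verifications are purely formal.
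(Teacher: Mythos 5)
Your proposal is correct and follows essentially the same route as the paper: identify $-C\mid_{F_1\times F_1}$ as an alternating form, use freeness of $F_0$ (hence of $F_1$) to see that $\wedge^{2}F_1\to\wedge^{2}F_0$ is injective, and use divisibility (= injectivity as a $\mathbb{Z}$-module) of $M$ to extend it to an alternating correction term, which preserves axiom (2) and optimality on the diagonal. The only difference is cosmetic: you write $\tilde C = C + D$ with $D$ extending $-C\mid_{F_1}$ where the paper writes $\tilde C = C - A$ with $A$ extending $C\mid_{F_1}$, and you supply an explicit argument (tensoring with $\mathbb{Q}$ plus torsion-freeness) for the injectivity of the exterior-square map, which the paper delegates to a literature reference.
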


\begin{proof}
Consider the restriction $C\mid_{F_{1}}$. We have $C(x,x)=0$ for all $x\in
F_{1}$ by axiom (3). Thus, $C\mid_{F_{1}}$ is an alternating bilinear
form\footnote{just to be sure about nomenclature: \emph{Alternating} means
that $C(x,x)=0$ for all $x$. This implies $C(x,y)=-C(y,x)$, but is strictly
more restrictive than the latter property.},%
\[
\left.  C\mid_{F_{1}}\right.  \in\operatorname*{Hom}\nolimits_{\mathbb{Z}%
}(\operatorname*{Alt}\nolimits^{2}(F_{1}),M)\text{.}%
\]
Since $F_{0}$ is free, so is $F_{1}$ ($\mathbb{Z}$ is a hereditary ring). Then
the injectivity $F_{1}\hookrightarrow F_{0}$ implies that the top horizontal
arrow in the diagram%
\begin{equation}%
%%%%%%%%%%%%%
%%%%%%%%%%
\xymatrix{
\operatorname{Alt}\nolimits^{2}(F_{1}) \ar@{^{(}->}[r] \ar[dr]_{{C\mid_{F_1}}}
& \operatorname{Alt}\nolimits^{2}(F_{0}) \ar@{-->}[d]^{A} \\
& M
}
%%%%%%%%%%
\label{lwwa5p}%
\end{equation}
is also injective (without too much harm it can be checked directly that
exterior powers of free modules preserve injectivity, but a literature
reference would be \cite[Theorem 1]{MR212044}). Since $M$ is divisible, it is
injective as a $\mathbb{Z}$-module, so the dashed arrow $A$ above exists and
makes the diagram commute. Define%
\[
\tilde{C}(x,y):=C(x,y)-A(x,y)\text{.}%
\]
We claim that $(F_{0},\pi,\tilde{C})$ is an admissible presentation. We
compute for $x,y\in F_{0}$ that%
\[
\tilde{C}(x,y)+\tilde{C}(y,x)=C(x,y)+C(y,x)-A(x,y)-A(y,x)=B(x,y)+0
\]
since $A$ is alternating, so Equation \ref{ljbc5s} is still valid. This
settles axioms (1) and (2). Moreover, for $x,y\in F_{1}$ we have%
\[
\tilde{C}(x,y)=C(x,y)-A(x,y)=0
\]
since $C\mid_{F_{1}}=A\mid_{F_{1}}$ by Diagram \ref{lwwa5p}, so axiom (3)
holds and we really have an admissible presentation. Regarding optimality,
note that $A(x,x)=0$, so $\tilde{C}(x,x)=C(x,x)$.
\end{proof}

The next construction will be the concrete input needed for establishing a
generalized form of Quinn's formula.

\begin{lemmaconst}
\label{lemma_ConstructAP}Suppose $M$ is arbitrary. If $G$ is a (possibly
infinite) direct sum of various (possibly infinite) cyclic groups, an optimal
admissible presentation for $q$ exists. (A concrete construction is given in
the proof)
\end{lemmaconst}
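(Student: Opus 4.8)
The plan is to exhibit one explicit presentation, exploiting the decomposition $G=\bigoplus_{j\in J}C_{j}$ into cyclic summands. I take $F_{0}$ to be the free abelian group on a basis $(\mathsf{e}_{j})_{j\in J}$ indexed by the summands, and let $\pi\colon F_{0}\twoheadrightarrow G$ send $\mathsf{e}_{j}$ to a generator $\bar{\mathsf{e}}_{j}:=\pi(\mathsf{e}_{j})$ of $C_{j}$. Writing $J_{1}$ for the indices with $C_{j}\cong\mathbb{Z}$ and $J_{2}$ for those with $C_{j}\cong\mathbb{Z}/n_{j}\mathbb{Z}$, the kernel is the ``diagonal'' free group $F_{1}=\bigoplus_{j\in J_{2}}n_{j}\mathbb{Z}\,\mathsf{e}_{j}$. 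Fix a total order on $J$. Since $F_{0}$ is free, a bilinear form $C$ is determined by arbitrary values $\sigma_{i,j}:=C(\mathsf{e}_{i},\mathsf{e}_{j})\in M$ on basis pairs, and I choose the upper-triangular recipe $\sigma_{i,j}=b(\bar{\mathsf{e}}_{i},\bar{\mathsf{e}}_{j})$ for $i<j$, $\sigma_{i,i}=q(\bar{\mathsf{e}}_{i})$, and $\sigma_{i,j}=0$ for $i>j$. Because $x\in F_{0}$ has finite support, every sum below is finite and $C$ is well-defined.

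My first task is to reduce pre-admissibility and optimality to Lemma/Construction \ref{lemma_AP_QuadFormFromBilinearForm}, which asks only for the single identity $q(\pi x)=C(x,x)$ for all $x\in F_{0}$ (it then supplies axiom (2) and the weak axiom (3) of Definition \ref{def_AP} for free). To verify it, expand the quadratic form over the support of $x=\sum_{i}x_{i}\mathsf{e}_{i}$: one has $q\bigl(\sum_{i}x_{i}\bar{\mathsf{e}}_{i}\bigr)=\sum_{i}q(x_{i}\bar{\mathsf{e}}_{i})+\sum_{i<j}b(x_{i}\bar{\mathsf{e}}_{i},x_{j}\bar{\mathsf{e}}_{j})$, and using $q(nx)=n^{2}q(x)$ from Example \ref{example_QuadScalar} together with bilinearity of $b$ this becomes $\sum_{i}x_{i}^{2}q(\bar{\mathsf{e}}_{i})+\sum_{i<j}x_{i}x_{j}\,b(\bar{\mathsf{e}}_{i},\bar{\mathsf{e}}_{j})$. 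On the other hand $C(x,x)=\sum_{i,j}x_{i}x_{j}\sigma_{i,j}$ collapses to precisely this expression, since the strictly lower-triangular $\sigma_{i,j}$ vanish and $\sigma_{i,i}=q(\bar{\mathsf{e}}_{i})$. Hence the optimality identity Equation \ref{ljbc4r} holds and the Lemma hands me an optimal pre-admissible presentation.

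The real content is upgrading to the strong form of axiom (3), namely $C\equiv 0$ on $F_{1}$. As $C$ is bilinear and $F_{1}$ is generated by the $n_{j}\mathsf{e}_{j}$ with $j\in J_{2}$, it suffices to check $C(n_{i}\mathsf{e}_{i},n_{j}\mathsf{e}_{j})=0$ for all $i,j\in J_{2}$. For $i>j$ this is $n_{i}n_{j}\sigma_{i,j}=0$ by the triangular convention; for $i=j$ it is $n_{i}^{2}\sigma_{i,i}=n_{i}^{2}q(\bar{\mathsf{e}}_{i})=q(n_{i}\bar{\mathsf{e}}_{i})=q(0)=0$, since $\bar{\mathsf{e}}_{i}$ has order dividing $n_{i}$; and for $i<j$ it is $n_{i}n_{j}\,b(\bar{\mathsf{e}}_{i},\bar{\mathsf{e}}_{j})=n_{j}\,b(n_{i}\bar{\mathsf{e}}_{i},\bar{\mathsf{e}}_{j})=n_{j}\,b(0,\bar{\mathsf{e}}_{j})=0$. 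Thus $C$ vanishes on a generating set of $F_{1}$, hence on all of $F_{1}$, and $(F_{0},\pi,C)$ is an optimal admissible presentation, as desired.

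I do not expect a deep obstacle: the construction is entirely explicit and the only delicate point is the admissibility verification. What is worth flagging is \emph{why} the asymmetric, upper-triangular choice of $C$ is essential. The symmetric alternative $C=\frac{1}{2}b$ would require inverting $2$ in $M$, whereas here $M$ is arbitrary. The triangular form sidesteps this by turning the diagonal admissibility check into the statement $q(n_{i}\bar{\mathsf{e}}_{i})=0$ and each off-diagonal check into annihilation of $b$ by a single torsion exponent --- both automatic from the orders of the generators --- so no division is ever needed.
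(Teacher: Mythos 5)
Your construction is exactly the paper's: the same free presentation $F_{0}=\bigoplus_{j\in J}\mathbb{Z}$ with diagonal kernel, the same upper-triangular bilinear form $C$, reduction to Lemma/Construction \ref{lemma_AP_QuadFormFromBilinearForm} for pre-admissibility and optimality, and the same check of the strong axiom (3) on the generators $n_{j}\mathsf{e}_{j}$ of $F_{1}$. The only (harmless) divergence is how you verify $q(\pi x)=C(x,x)$: you expand $q$ over the support directly via the polarization identity, whereas the paper shows $Q$ and $Q'(x):=C(x,x)$ have equal polarizations and that their difference is a homomorphism vanishing on the basis; both arguments are correct.
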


\begin{proof}
[Proof and Construction]\textit{(Step 1)} A cyclic group is either of the form
$\mathbb{Z}$ or $\mathbb{Z}/n$ for some $n\geq1$. Thus, each direct summand in
$G$ has a presentation of the shape%
\[
0\rightarrow\mathbb{Z}\overset{\cdot n}{\rightarrow}\mathbb{Z}\rightarrow
\mathbb{Z}/n\rightarrow0\qquad\text{or}\qquad0\rightarrow0\rightarrow
\mathbb{Z}\rightarrow\mathbb{Z}\rightarrow0\text{.}%
\]
Take the direct sum of these, i.e. we have found a presentation%
\begin{equation}
0\longrightarrow\underset{i\in I}{\bigoplus}\,\mathbb{Z}\longrightarrow
\underset{j\in J}{\bigoplus}\,\mathbb{Z}\overset{\pi}{\longrightarrow
}G\longrightarrow0 \label{ljbc4a}%
\end{equation}
for suitable index sets $I\subseteq J$, where the first arrow is given by a
diagonal matrix. We take this as $F_{1}\hookrightarrow F_{0}\twoheadrightarrow
G$. This sets up (1) in an admissible presentation. Use the same notation $Q$
and $B$ as in Equation \ref{ljbc2a} and \ref{ljbc2} now.\newline\textit{(Step
2)} Write $(e_{j})_{j\in J}$ for the basis vectors of $F_{0}$. Fix a total
order on $J$. Define%
\begin{equation}
C(e_{i},e_{j}):=\left\{
\begin{array}
[c]{ll}%
B(e_{i},e_{j}) & \text{if }i<j\\
Q(e_{i}) & \text{if }i=j\\
0 & \text{if }i>j
\end{array}
\right.  \label{lwbxy1}%
\end{equation}
on this basis. Prolong it uniquely to all of $F_{0}$ by $\mathbb{Z}%
$-bilinearity. For any $i,j\in J$ with $i\neq j$ we find%
\[
C(e_{i},e_{j})+C(e_{j},e_{i})=B(e_{i},e_{j})
\]
since $b$ is symmetric. If $i=j$ then $C(e_{i},e_{j})+C(e_{j},e_{i}%
)=2Q(e_{i})$, while we also have%
\[
B(e_{i},e_{i})=Q(2e_{i})-2Q(e_{i})=2Q(e_{i})
\]
by using Equation \ref{ljbc2} (and Example \ref{example_QuadScalar}). For the
polarization $B^{\prime}$ of the quadratic form $Q^{\prime}(x):=C(x,x)$ we
compute%
\[
B^{\prime}(x,y)=C(x+y,x+y)-C(x,x)-C(y,y)=C(x,y)+C(y,x)
\]
and thus%
\[
B^{\prime}(e_{i},e_{j})=\left\{
\begin{array}
[c]{ll}%
B(e_{i},e_{j}) & \text{if }i\neq j\\
2Q(e_{i}) & \text{if }i=j\text{.}%
\end{array}
\right.
\]
We find that the polarization forms of $Q$ and $Q^{\prime}$ agree. Thus,
$L:=Q-Q^{\prime}$ is a quadratic form whose polarization is zero. With the
same computation as in Equation \ref{lwdaa1} it follows that $L:F_{0}%
/2F_{0}\rightarrow\left.  _{2}M\right.  $ is a group homomorphism. We compute%
\[
L(e_{i})=Q(e_{i})-Q^{\prime}(e_{i})=Q(e_{i})-C(e_{i},e_{i})=0\text{,}%
\]
just by unravelling the definition of $Q^{\prime}$ and using Equation
\ref{lwbxy1}. Since the $(e_{j})_{j\in J}$ form a basis of $F_{0}$ and $L$ is
linear, it follows that $L(x)=0$ for all $x\in F_{0}$. Thus, $Q(x)=Q^{\prime
}(x)=C(x,x)$ for all $x\in F_{0}$. This shows that $Q$ comes from a bilinear
form, so we may invoke Lemma \ref{lemma_AP_QuadFormFromBilinearForm} and learn
that $(F_{0},\pi,C)$ is an optimal pre-admissible presentation.\newline%
\textit{(Step 3)} Finally, we need to show the strong form of axiom (3). Note
that by our special construction of the presentation in Equation \ref{ljbc4a},
the group $F_{1}=\ker(\pi)$ is generated by elements of the shape $(n_{i}%
e_{i})_{i\in I}$ with $n_{i}\in\mathbb{Z}_{\geq1}$, i.e. $\pi(n_{i}e_{i})=0$.
We compute for arbitrary $i,j\in I$ (again using the fact from Example
\ref{example_QuadScalar}),%
\begin{align*}
C(n_{i}e_{i},n_{j}e_{j})  &  =n_{i}n_{j}C(e_{i},e_{j})\\
&  =\left\{
\begin{array}
[c]{ll}%
n_{i}n_{j}B(e_{i},e_{j})=B(n_{i}e_{i},n_{j}e_{j}) & \text{if }i<j\\
n_{i}^{2}Q(e_{i})=Q(n_{i}e_{i}) & \text{if }i=j\\
0 & \text{if }i>j\text{,}%
\end{array}
\right.
\end{align*}
but of course by the very definition of $Q$ and $B$, these terms all vanish
since we have $\pi(n_{i}e_{i})=0$. This proves that $(F_{0},\pi,C)$ is admissible.
\end{proof}

Finally, let us show that optimal (pre-)admissible presentations always exist
under very general hypotheses. However, the constructions in the proof cannot
be carried out constructively usually, so the following theorem will not help
when wanting to develop explicit formulas.

\begin{theorem}
[Abstract Existence]\label{thm_AbstractExistence}Suppose $G,M$ are abelian groups.

\begin{enumerate}
\item Then for any quadratic form $q:G\rightarrow M$ an optimal pre-admissible
presentation exists.

\item If $M$ is divisible, then for any quadratic form $q:G\rightarrow M$ an
optimal admissible presentation exists.
\end{enumerate}
\end{theorem}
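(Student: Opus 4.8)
The plan is to reduce both statements to the two special situations already settled: the case where the pulled-back form comes from a bilinear form (Lemma/Construction \ref{lemma_AP_QuadFormFromBilinearForm}) and the upgrade from a pre-admissible to an admissible presentation in the free-plus-divisible setting (Lemma \ref{lemma_DivisibleAndFreeCaseMakePreAdmAdmissible}). The one structural fact that makes this work is that every abelian group is a quotient of a free abelian group, together with the observation that on a \emph{free} abelian group every quadratic form automatically comes from a bilinear form.

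For part (1), I would first choose any surjection $\pi:F_{0}\twoheadrightarrow G$ with $F_{0}$ free abelian (for instance the free abelian group on a generating set of $G$), and set $Q:=q\circ\pi$ with polarization $B$. Since $F_{0}$ is free abelian, hence a direct sum of copies of $\mathbb{Z}$, I can pick a basis $(e_{j})_{j\in J}$, fix a total order on $J$, and define the bilinear form $C$ exactly by the triangular recipe of Equation \ref{lwbxy1}, i.e. $C(e_{i},e_{j})=B(e_{i},e_{j})$ for $i<j$, $C(e_{i},e_{i})=Q(e_{i})$, and $C(e_{i},e_{j})=0$ for $i>j$, extended $\mathbb{Z}$-bilinearly. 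The computation in Step 2 of the proof of Lemma \ref{lemma_ConstructAP} then applies verbatim (it never used anything beyond freeness of $F_{0}$) and shows $Q(x)=C(x,x)$ for all $x\in F_{0}$, so $Q$ comes from a bilinear form. Lemma/Construction \ref{lemma_AP_QuadFormFromBilinearForm}(1) now concludes immediately that $(F_{0},\pi,C)$ is an optimal pre-admissible presentation. (Note that optimality forces pre-admissible axiom (3) for free: for $x\in F_{1}=\ker\pi$ one has $C(x,x)=Q(x)=q(\pi x)=q(0)=0$.)

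For part (2), I would keep the same free $F_{0}$ and the same $\pi$, and start from the optimal pre-admissible presentation $(F_{0},\pi,C)$ just produced. Because $M$ is divisible and $F_{0}$ is free abelian, Lemma \ref{lemma_DivisibleAndFreeCaseMakePreAdmAdmissible} applies directly: it replaces $C$ by a bilinear form $\tilde{C}$ for which $(F_{0},\pi,\tilde{C})$ is admissible, and it explicitly preserves optimality when one begins from an optimal presentation. This yields the desired optimal admissible presentation and finishes the proof.

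I expect essentially all the genuine content to sit inside the lemmas invoked, so that the theorem itself is a bookkeeping assembly; the only step where anything could go wrong is the strengthening of axiom (3) from the pre-admissible condition ``$C\mid_{F_{1}}$ alternating'' to the admissible condition ``$C\mid_{F_{1}}=0$''. This is precisely the point at which divisibility of $M$ (equivalently, injectivity of $M$ as a $\mathbb{Z}$-module) is consumed, via extension of the alternating form $C\mid_{F_{1}}$ along the injection $\operatorname{Alt}^{2}(F_{1})\hookrightarrow\operatorname{Alt}^{2}(F_{0})$. Without divisibility this extension need not exist, which is exactly why part (2) carries the divisibility hypothesis while part (1) does not.
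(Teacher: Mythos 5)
Your proposal is correct and follows essentially the same route as the paper's own proof: a free presentation of $G$, the triangular definition of $C$ on an ordered basis as in Lemma/Construction \ref{lemma_ConstructAP}, the conclusion via Lemma \ref{lemma_AP_QuadFormFromBilinearForm}, and then Lemma \ref{lemma_DivisibleAndFreeCaseMakePreAdmAdmissible} to pass from pre-admissible to admissible using divisibility of $M$. Your closing remark correctly identifies where divisibility is consumed and why it cannot be dispensed with in part (2).
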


\begin{proof}
\textit{(Step 1)} We imitate the method of Lemma \ref{lemma_ConstructAP} for
as long as possible. First, pick a free resolution of $G$,%
\[
0\longrightarrow\underset{i\in I}{\bigoplus}\,\mathbb{Z}\longrightarrow
\underset{j\in J}{\bigoplus}\,\mathbb{Z}\overset{\pi}{\longrightarrow
}G\longrightarrow0
\]
for suitable index sets $I,J$. This always exists (and has length $2$ either
because $\mathbb{Z}$ is a ring of global dimension one, or, more down to
earth, since the kernel of $\pi$ has to be free abelian itself). As before,
write $(e_{j})_{j\in J}$ for the basis vectors of $F_{0}$, fix a total order
on $J$, and let $Q$ and $B$ denote the lifts of $q$ and $b$ to $F_{0}$ (as in
Equations \ref{ljbc2a}-\ref{ljbc2}).\ This replaces Step 1 in the proof of
Lemma \ref{lemma_ConstructAP}.\newline\textit{(Step 2)} Define%
\begin{equation}
C(e_{i},e_{j}):=\left\{
\begin{array}
[c]{ll}%
B(e_{i},e_{j}) & \text{if }i<j\\
Q(e_{i}) & \text{if }i=j\\
0 & \text{if }i>j\text{.}%
\end{array}
\right.  \label{lwbxa1}%
\end{equation}
Now repeat the same arguments as in Step 2 of the proof of Lemma
\ref{lemma_ConstructAP}. This all goes through and proves that $(F_{0},\pi,C)$
is an optimal pre-admissible presentation. This proves the first claim. Step 3
in the cited proof does not adapt to the present setting. We do something
else:\newline\textit{(Step 3)} If $M$ is divisible, we can invoke Lemma
\ref{lemma_DivisibleAndFreeCaseMakePreAdmAdmissible} and transform the
construction from Step 2 into an admissible optimal presentation $(F_{0}%
,\pi,\tilde{C})$. This settles the second claim.
\end{proof}

\begin{problem}
Does any quadratic form $q:G\rightarrow M$ admit an optimal admissible
presentation without assuming $M$ divisible?
\end{problem}

Thanks to Proposition \ref{Prop_OptimizeAP} one would only need to exhibit an
admissible presentation; the optimality can be achieved afterwards.

\begin{example}
We illustrate that Step 1 in the above proof cannot be expected to give
admissible presentations right away. Consider the needlessly complicated free
resolution%
\[
\mathbb{Z}^{n-1}\longrightarrow\mathbb{Z}^{n}\overset{\pi}{\longrightarrow
}\mathbb{Z}\text{,}%
\]
where $\pi(x_{1},\ldots,x_{n})=\sum_{i=1}^{n}x_{i}$. For the quadratic form
$x\mapsto x^{2}$ on $\mathbb{Z}$, the procedure in the proof of Theorem
\ref{thm_AbstractExistence} yields that $(\mathbb{Z}^{n},\pi,C)$ with%
\[
C(e_{i},e_{j})=\left\{
\begin{array}
[c]{ll}%
2 & \text{if }i<j\\
1 & \text{if }i=j\\
0 & \text{if }i>j
\end{array}
\right.
\]
is an optimal pre-admissible presentation. All the vectors $e_{i}-e_{k}$ lie
in the kernel $F_{1}=\ker(\pi)$. For $i<j<k$ we compute $C(e_{i}-e_{k}%
,e_{j}-e_{k})=1$.
\end{example}

\section{The lifting function\label{sect_Lift}}

Suppose $q\in\operatorname*{Quad}(G,M)$ is a quadratic form and assume we have
chosen an admissible presentation $(F_{0},\pi,C)$ as in
\S \ref{sect_AdmissiblePresentation}.

\begin{definition}
\label{def_AL}For any non-zero element $x\in G$ pick once and for all a lift
$\widetilde{x}\in F_{0}$, i.e. some element such that $\pi(\widetilde{x})=x$.
For the neutral element we pick the special lift%
\begin{equation}
\widetilde{0}:=0\text{.} \label{ljbc5a}%
\end{equation}
Call any such choice an \emph{admissible lift}.
\end{definition}

As $\pi$ is surjective, it is clear that admissible lifts always exist.

\begin{example}
We stress that we have $\pi\widetilde{x}=x$, but in general there is not much
we can say about how $\widetilde{(-)}$ interacts with algebraic operations.
For example, $\widetilde{2x}\neq2\widetilde{x}$, $\widetilde{(-x)}%
\neq-\widetilde{(x)}$ or $\widetilde{x+y}\neq\widetilde{x}+\widetilde{y}$ are
all possible in suitably chosen examples, and in general $\pi$ will not admit
a splitting in terms of abelian groups, so in general we cannot avoid for
these lifts to depend somewhat non-linearly on the input.
\end{example}

Having fixed an admissible lift, define a map%
\[
L:G\times G\longrightarrow M
\]%
\begin{equation}
L(x,y):=\widetilde{(x+y)}-\widetilde{x}-\widetilde{y}\text{.} \label{lbcg1a}%
\end{equation}
Note that there is \textit{no reason} why $L$ would have to be bilinear in any
way. We can record a few useful facts about $L$ nonetheless:

\begin{lemma}
\label{lemma_LSymmetries}Fix admissible lifts and suppose $(F_{0},\pi,C)$ is
an admissible presentation. We have

\begin{enumerate}
\item $L(0,y)=0$,

\item $L(x,y)=L(y,x)$,

\item $L(x+y,z)-L(x,y+z)=L(y,z)-L(x,y)$,

\item $C(L(u,x),L(y,z))=0$.
\end{enumerate}

for all $u,x,y,z\in G$.
\end{lemma}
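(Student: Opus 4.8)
The plan is to open with the single structural fact that makes the whole lemma run: since $\pi$ is a group homomorphism with $\pi\widetilde{x}=x$, applying $\pi$ to the defining formula in Equation \ref{lbcg1a} gives $\pi(L(x,y))=(x+y)-x-y=0$. Hence $L$ takes values in the subgroup $F_{1}=\ker(\pi)$. I would state this first, because parts (1)--(3) are then purely formal identities in the abelian group $F_{0}$, and part (4) collapses immediately once one knows $L$ lands in $F_{1}$.

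For (1) I would simply substitute into the definition: $L(0,y)=\widetilde{(0+y)}-\widetilde{0}-\widetilde{y}$, and use the normalization $\widetilde{0}=0$ from Equation \ref{ljbc5a} together with $\widetilde{(0+y)}=\widetilde{y}$ to get $0$. For (2), I would point out that $x+y=y+x$ in $G$, so the chosen lift of this one element satisfies $\widetilde{(x+y)}=\widetilde{(y+x)}$ (the lift is a function of the group element, which is the same), and since subtraction in the abelian group $F_{0}$ is insensitive to the order of the two subtracted terms, $L(x,y)=L(y,x)$ follows with no further input.

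Part (3) is the only step requiring a little bookkeeping, and I would handle it by telescoping. Expanding the left-hand side, $L(x+y,z)=\widetilde{(x+y+z)}-\widetilde{(x+y)}-\widetilde{z}$ and $L(x,y+z)=\widetilde{(x+y+z)}-\widetilde{x}-\widetilde{(y+z)}$, so the common term $\widetilde{(x+y+z)}$ cancels in the difference, leaving $\widetilde{x}+\widetilde{(y+z)}-\widetilde{(x+y)}-\widetilde{z}$. Expanding the right-hand side, $L(y,z)-L(x,y)$ produces the same expression once the two $\widetilde{y}$ contributions cancel. This is a short, mechanical verification; there is no genuine obstacle here.

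The one place where the specific hypothesis is used is (4), and I would flag it as the conceptual point of the lemma. Having already observed that both $L(u,x)$ and $L(y,z)$ lie in $F_{1}$, and since $(F_{0},\pi,C)$ is \emph{admissible} --- meaning $C$ vanishes identically on $F_{1}\times F_{1}$, the strong form of axiom (3) in Definition \ref{def_AP} --- the pairing $C(L(u,x),L(y,z))$ is zero. I would emphasize here that mere pre-admissibility does \emph{not} suffice: it only guarantees $C(a,a)=0$ for $a\in F_{1}$, i.e. the diagonal case, whereas (4) requires the full vanishing on mixed pairs. This is precisely why the lemma hypothesizes an admissible, and not merely pre-admissible, presentation, and it is the reason these two notions were separated in Definition \ref{def_AP}.
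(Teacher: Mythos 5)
Your proposal is correct and follows essentially the same route as the paper: direct substitution and cancellation for (1)--(3), and for (4) the observation that $L$ lands in $F_{1}=\ker(\pi)$ combined with the strong form of axiom (3) of an admissible presentation. Your remark that mere pre-admissibility does not suffice matches the paper's own comment following the lemma statement.
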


For merely pre-admissible presentations property (4) can fail.

\begin{proof}
(1) We find%
\[
L(0,y)=\widetilde{y}-\widetilde{0}-\widetilde{y}=0
\]
using our special choice of lift in Equation \ref{ljbc5a}. (2) Obvious. (3) We
find%
\begin{align*}
L(x+y,z)-L(x,y+z)  &  =\widetilde{(x+y+z)}-\widetilde{(x+y)}-\widetilde
{z}-\widetilde{(x+y+z)}+\widetilde{x}+\widetilde{(y+z)}\\
&  =\widetilde{x}-\widetilde{(x+y)}+\widetilde{(y+z)}-\widetilde
{z}=L(y,z)-L(x,y)\text{,}%
\end{align*}
where we have just used cancellations of terms. (4) Note that for all $x,y\in
G$ we have $\pi(\widetilde{(x+y)}-\widetilde{x}-\widetilde{y})=0$, so
$\widetilde{(x+y)}-\widetilde{x}-\widetilde{y}\in F_{1}$. This proves (4)
since this applies to both arguments, so we can use the strong form of axiom
(3) of an admissible presentation.
\end{proof}

\section{\label{sect_ConstructCocycle}Constructing abelian $3$-cocycles}

Let $q\in\operatorname*{Quad}(G,M)$ be a quadratic form. Suppose we have fixed
an admissible presentation $(F_{0},\pi,C)$ as in Definition \ref{def_AP},
alongside a choice of admissible lifts as in Definition \ref{def_AL}. We use
the notation $F_{0},F_{1},\pi,C,B,Q$ as explained in
\S \ref{sect_AdmissiblePresentation}.

Define maps%
\[
h:G\times G\times G\longrightarrow M\qquad\text{and}\qquad c:G\times
G\longrightarrow M
\]
by%
\begin{equation}
h(x,y,z):=-C(\widetilde{x},L(y,z))\qquad\text{and}\qquad c(x,y):=C(\widetilde
{x},\widetilde{y})\text{,} \label{ldefio1}%
\end{equation}
where $C$ is the bilinear form of the admissible presentation, $\widetilde
{(-)}$ denotes the admissible lift and $L$ is the (non-linear!) pairing of
Equation \ref{lbcg1a}.

Again, note that there is no reason why $h$ or $c$ would be multilinear.

\begin{lemma}
[Key Lemma]\label{lemma_E2}The datum $(h,c)$ of Equation \ref{ldefio1}
describes an abelian $3$-cocycle.
\end{lemma}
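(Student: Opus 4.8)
The plan is to verify directly the four defining conditions of an abelian $3$-cocycle: that $h$ satisfies the group $3$-cocycle (pentagon) identity (\ref{lefmu1a}), that $h(x,0,z)=0$ as in (\ref{lefmu2aa}), and that the pair $(h,c)$ satisfies the two hexagon identities (\ref{lx_a_1}) and (\ref{lx_a_2}). Every term in these identities is of the form $C$ applied to lifts $\widetilde{(-)}$ and to $L$-values, so the single computational input I will use throughout is the defining relation $\widetilde{(x+y)}=\widetilde{x}+\widetilde{y}+L(x,y)$ together with the $\mathbb{Z}$-bilinearity of $C$. With these in hand, the combinatorics is controlled entirely by Lemma \ref{lemma_LSymmetries} and by axiom (2) of Definition \ref{def_AP}.

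For the pentagon I would expand both sides. The only nontrivial manipulation is in the term $h(u+x,y,z)=-C(\widetilde{(u+x)},L(y,z))$, where I substitute $\widetilde{(u+x)}=\widetilde{u}+\widetilde{x}+L(u,x)$ and expand by bilinearity; the resulting cross term $C(L(u,x),L(y,z))$ vanishes by part (4) of Lemma \ref{lemma_LSymmetries}. After this the $\widetilde{x}$- and $\widetilde{u}$-contributions separate: the $\widetilde{x}$-terms cancel outright, and the remaining $\widetilde{u}$-terms collapse to $C\bigl(\widetilde{u},\,L(x,y+z)-L(x+y,z)+L(y,z)-L(x,y)\bigr)$, whose second argument is zero by the cocycle-type relation (3) of Lemma \ref{lemma_LSymmetries}. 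Hence the pentagon holds. This step is the crux, and it is exactly here that admissibility (not merely pre-admissibility) is essential: property (4) rests on the strong form of axiom (3), namely that $C$ annihilates $F_{1}\times F_{1}$, and both $L(u,x)$ and $L(y,z)$ lie in $F_{1}$.

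The normalization $h(x,0,z)=-C(\widetilde{x},L(0,z))=0$ is immediate from part (1) of Lemma \ref{lemma_LSymmetries}. For hexagon (A) I expand $c(x,y+z)=C(\widetilde{x},\widetilde{y})+C(\widetilde{x},\widetilde{z})+C(\widetilde{x},L(y,z))$; the summand $C(\widetilde{x},L(y,z))$ cancels against $h(x,y,z)$, the $C(\widetilde{x},\widetilde{y})$ and $C(\widetilde{x},\widetilde{z})$ terms match $c(x,y)$ and $c(x,z)$ on the right, and the leftover discrepancy is $C\bigl(\widetilde{y},\,L(x,z)-L(z,x)\bigr)$, which vanishes by the symmetry (2) of $L$. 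For hexagon (A') I expand $c(x+y,z)=C(\widetilde{x},\widetilde{z})+C(\widetilde{y},\widetilde{z})+C(L(x,y),\widetilde{z})$; again the $C(\widetilde{x},\widetilde{z})$ and $C(\widetilde{y},\widetilde{z})$ terms match, the difference $C\bigl(\widetilde{x},\,L(y,z)-L(z,y)\bigr)$ vanishes by symmetry of $L$, and the only genuinely new feature is the pair $C(\widetilde{z},L(x,y))+C(L(x,y),\widetilde{z})$. By axiom (2) this equals $b(\pi\widetilde{z},\pi L(x,y))$, and since $L(x,y)\in F_{1}=\ker\pi$ its $\pi$-image is zero, so this too vanishes. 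This closes (A') and completes the verification.

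In summary, the whole argument is a bookkeeping exercise once the three ingredients are assembled: the additive defect formula $\widetilde{(x+y)}=\widetilde{x}+\widetilde{y}+L(x,y)$ for lifts, the bilinearity of $C$, and Lemma \ref{lemma_LSymmetries}. The only place where genuine structure beyond formal cancellation is consumed is the pentagon, through the isotropy of $F_{1}$ under $C$ (property (4)); every other identity follows either from the symmetry of $L$ or from the polarization axiom (2) combined with the observation that $L$-values are killed by $\pi$. I therefore expect the pentagon, and specifically the vanishing of the cross term $C(L(u,x),L(y,z))$, to be the main obstacle, in the sense that it is the sole step where the passage from pre-admissible to admissible presentations is truly used.
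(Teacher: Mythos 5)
Your proposal is correct and follows essentially the same route as the paper's proof: the same three ingredients (bilinearity of $C$, the symmetry/cocycle/isotropy properties of $L$ from Lemma \ref{lemma_LSymmetries}, and axiom (2) of the presentation) are consumed in the same places, with only cosmetic differences in the order of expansion (you unpack $c(x,y+z)$ and $\widetilde{(u+x)}$ where the paper first collapses the $h$-terms, and in hexagon (A') you invoke $\pi L(x,y)=0$ directly where the paper lets the $b$-terms cancel by bilinearity). Your closing observation that the vanishing of $C(L(u,x),L(y,z))$ in the pentagon is the one step requiring admissibility rather than pre-admissibility matches the paper's own remark following Lemma \ref{lemma_LSymmetries}.
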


We stress that we only need the above assumptions, i.e. the admissible
presentation $(F_{0},\pi,C)$ does not need to be optimal.

\begin{proof}
\textit{(Step 0) }We have $h(x,0,z)=-C(\widetilde{x},L(0,z))=0$ by Lemma
\ref{lemma_LSymmetries} and since $C$ is $\mathbb{Z}$-bilinear. \textit{(Step
1)} We first check Equation \ref{lx_a_1}. We unravel%
\begin{align*}
h(y,z,x)+h(x,y,z)-h(y,x,z)  &  =-C(\widetilde{y},L(z,x))-C(\widetilde
{x},L(y,z))+C(\widetilde{y},L(x,z))\\
&  =-C(\widetilde{x},L(y,z))
\end{align*}
as the first and third term cancel each other out since $L$ is symmetric
(Lemma \ref{lemma_LSymmetries}). Now unpack the definition of $L$ and use that
$C$ is $\mathbb{Z}$-bilinear on $F_{0}$, giving%
\[
=-C(\widetilde{x},\widetilde{y+z})+C(\widetilde{x},\widetilde{y}%
)+C(\widetilde{x},\widetilde{z})=-c(x,y+z)+c(x,y)+c(x,z)\text{,}%
\]
which confirms Equation \ref{lx_a_1}. \textit{(Step 2) }Next, we check
Equation \ref{lx_a_2}, which is a little asymmetric in comparison to the
previous computation: We unravel%
\begin{align*}
h(x,z,y)-h(z,x,y)-h(x,y,z)  &  =-C(\widetilde{x},L(z,y))+C(\widetilde
{z},L(x,y))+C(\widetilde{x},L(y,z))\\
&  =C(\widetilde{z},L(x,y))\text{,}%
\end{align*}
again using that $L$ is symmetric. Again, unpack $L$ and use the bilinearity
of $C$, giving%
\[
=C(\widetilde{z},\widetilde{x+y})-C(\widetilde{z},\widetilde{x})-C(\widetilde
{z},\widetilde{y})\text{.}%
\]
Next, by Equation \ref{ljbc5s} we have $C(y,x)=B(x,y)-C(x,y)$ for all $x,y\in
F_{0}$. Thus, rewrite the preceding equation as%
\begin{align}
&  =B(\widetilde{x+y},\widetilde{z})-B(\widetilde{x},\widetilde{z}%
)-B(\widetilde{y},\widetilde{z})\label{ljbc5}\\
&  -C(\widetilde{x+y},\widetilde{z})+C(\widetilde{x},\widetilde{z}%
)+C(\widetilde{y},\widetilde{z})\text{.}\nonumber
\end{align}
However, we also have Equation \ref{ljbc4}, namely $B(x,y)=b(\pi x,\pi y)$, so
the first line simplifies to%
\[
b(\pi(\widetilde{x+y}),\pi(\widetilde{z}))-b(\pi\widetilde{x},\pi\widetilde
{z})-b(\pi\widetilde{y},\pi\widetilde{z})
\]
but $\pi\widetilde{x}=x$ for all $x\in G$, so this equals
$b(x+y,z)-b(x,z)-b(y,z)$. Since $b$ is the polarization form of $q$, Equation
\ref{ljbc1}, $b$ is $\mathbb{Z}$-bilinear, so this expression vanishes for all
$x,y,z\in G$. Thus, Equation \ref{ljbc5} simplifies to%
\[
=-C(\widetilde{x+y},\widetilde{z})+C(\widetilde{x},\widetilde{z}%
)+C(\widetilde{y},\widetilde{z})=-c(x+y,z)+c(x,z)+c(y,z)\text{,}%
\]
which confirms Equation \ref{lx_a_2}.\textit{ (Step 3) }Finally, we need to
check whether $h$ is a group $3$-cocycle, i.e. confirm whether%
\begin{equation}
h(x,y,z)+h(u,x+y,z)+h(u,x,y)-h(u,x,y+z)-h(u+x,y,z)=0 \label{ljbc5b}%
\end{equation}
holds for all $x,y,z,u\in G$. We first evaluate%
\[
h(u,x+y,z)=-C(\widetilde{u},L(x+y,z))
\]
and relying on $L(x+y,z)=L(x,y+z)+L(y,z)-L(x,y)$ (an equality stemming from
Lemma \ref{lemma_LSymmetries}, (3), rearranged), the preceding equation can be
rewritten as%
\begin{align*}
h(u,x+y,z)  &  =-C(\widetilde{u},L(x,y+z)+L(y,z)-L(x,y))\\
&  =-C(\widetilde{u},L(x,y+z))-C(\widetilde{u},L(y,z))+C(\widetilde
{u},L(x,y))\\
&  =h(u,x,y+z)+h(u,y,z)-h(u,x,y)\text{,}%
\end{align*}
where we have used that $C$ is a $\mathbb{Z}$-bilinear form on $F_{0}$. Plug
this into the left-hand side of Equation \ref{ljbc5b}, showing that it
suffices to prove%
\[
h(u,y,z)+h(x,y,z)-h(u+x,y,z)=0\text{.}%
\]
However, unravelling the definition of $h$, this simplifies to%
\[
=-C(\widetilde{u}+\widetilde{x}-\widetilde{(u+x)},L(y,z))=C(L(u,x),L(y,z))
\]
since $C$ is $\mathbb{Z}$-bilinear. This proves the desired vanishing by using
the last property shown in Lemma \ref{lemma_LSymmetries}.
\end{proof}

Finally, we are ready for our main result.

\begin{theorem}
\label{thm_GeneralFormulaWithAP}Let $G,M$ be abelian groups and $q\in
\operatorname*{Quad}(G,M)$. Suppose

\begin{itemize}
\item $(F_{0},\pi,C)$ is an optimal admissible presentation, and

\item $\widetilde{(-)}$ is an admissible lifting.
\end{itemize}

Then%
\[
h(x,y,z):=-C(\widetilde{x},L(y,z))\text{,}\qquad c(x,y):=C(\widetilde
{x},\widetilde{y})
\]
with the non-linear function $L(x,y):=\widetilde{(x+y)}-\widetilde
{x}-\widetilde{y}$, defines an abelian $3$-cocycle whose attached quadratic
form is $q$, i.e.%
\[
H_{ab}^{3}(G,M)\longrightarrow\operatorname*{Quad}(G,M)\text{,}\qquad
\qquad(h,c)\longmapsto q
\]
under the map of Theorem \ref{thm_EilenbergMacLaneIso1}.
\end{theorem}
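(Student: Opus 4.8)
The plan is to split the statement into its two assertions: first that $(h,c)$ is a genuine abelian $3$-cocycle, and second that it maps to the prescribed quadratic form $q$ under the Eilenberg--Mac Lane trace. The first assertion requires no new work at all. The formulas for $h$ and $c$ in the statement are verbatim the formulas of Equation \ref{ldefio1}, and the hypotheses here (an admissible presentation together with an admissible lift) are exactly those of the Key Lemma (Lemma \ref{lemma_E2}). Hence I would simply invoke Lemma \ref{lemma_E2} to conclude that $(h,c)\in Z_{ab}^{3}(G,M)$. I want to stress that optimality plays no role whatsoever in this part, precisely as the remark following Lemma \ref{lemma_E2} warns: admissibility alone already suffices to produce an abelian $3$-cocycle.

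The entire remaining content is therefore the computation of the attached quadratic form, and this is where optimality finally enters. By the definition of the trace in Theorem \ref{thm_EilenbergMacLaneIso1}, the quadratic form attached to $(h,c)$ is $x\mapsto c(x,x)$. Unwinding the definition of $c$ gives $c(x,x)=C(\widetilde{x},\widetilde{x})$. Now optimality, i.e. Equation \ref{ljbc4r} applied to the element $\widetilde{x}\in F_{0}$, reads $C(\widetilde{x},\widetilde{x})=Q(\widetilde{x})$. Finally, using the definition $Q(y)=q(\pi y)$ from Equation \ref{ljbc2a} together with $\pi\widetilde{x}=x$, this equals $q(\pi\widetilde{x})=q(x)$. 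Chaining these equalities yields $c(x,x)=q(x)$ for every $x\in G$, so the trace of $(h,c)$ is exactly $q$, as claimed.

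I do not anticipate any real obstacle: the theorem is essentially a bookkeeping statement recording what optimality buys beyond the Key Lemma. If anything, the only points worth double-checking are that the argument of the trace map really is $c(x,x)$ and not some twisted variant, and that the normalization $\widetilde{0}=0$ is consistent with the value $q(0)=0$ forced on any quadratic form; both are immediate. The conceptual takeaway, which I would make explicit, is that admissibility alone produces an abelian $3$-cocycle with the correct polarization, while optimality is the precise extra condition needed to pin its trace to the originally given $q$ rather than to some other quadratic form sharing that polarization.
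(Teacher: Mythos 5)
Your proposal is correct and is essentially identical to the paper's own proof: both invoke the Key Lemma (Lemma \ref{lemma_E2}) for the cocycle property and then compute $c(x,x)=C(\widetilde{x},\widetilde{x})=Q(\widetilde{x})=q(\pi\widetilde{x})=q(x)$ via optimality. (Incidentally, your version correctly writes $C(\widetilde{x},\widetilde{x})$ where the paper's displayed chain has a typo $C(\widetilde{x},\widetilde{y})$.)
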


Recall that if one only is given a non-optimal admissible presentation, one
can always change it into an optimal one by Proposition \ref{Prop_OptimizeAP}.

\begin{proof}
This is easy now. Firstly, by Lemma \ref{lemma_E2} $(h,c)$ is an abelian
$3$-cocycle. The trace maps it to the quadratic form%
\[
q^{\prime}(x)\underset{(1)}{=}c(x,x)\underset{(2)}{=}C(\widetilde
{x},\widetilde{y})\underset{(3)}{=}Q(\widetilde{x})=q(\pi\widetilde{x})=q(x)
\]
for $x\in G$. Here (1) is just the definition of the trace map from abelian
$3$-cocycles to quadratic forms, (2) is the definition of $c(-,-)$, (3) is the
optimality of the admissible presentation (Equation \ref{ljbc4r}), and the
rest unravels definitions.
\end{proof}

\section{Generalized Quinn formula}

We can now use the tools of \S \ref{sect_ConstructCocycle} to reprove Quinn's
formula in a generalized format. In particular, this gives an alternative
approach to the original proof for $G$ finite abelian \cite[\S 2.5.1-2.5.2]%
{MR1734419}.

Below, we intentionally stay close to the notation of Quinn's article so that
the resulting formula has the same shape.

\begin{theorem}
[Generalized\ Quinn formula]\label{thm_QuinnFormula}Let $M$ be any abelian
group. Suppose%
\begin{equation}
G=\left(  \bigoplus_{j\in J_{1}}\mathbb{Z}\right)  \oplus\left(
\bigoplus_{j\in J_{2}}\mathbb{Z}/n_{j}\mathbb{Z}\right)  \label{lquinn1}%
\end{equation}
for $J_{1},J_{2}$ any index sets, and $n_{j}\geq1$ suitable integers. Fix a
total order on the disjoint union $J:=J_{1}\dot{\cup}J_{2}$, say with
$J_{1}<J_{2}$. Write $(\mathsf{e}_{j})_{j\in J}$ for the standard generators
(i.e. the element $1_{\mathbb{Z}}$ resp. $1_{\mathbb{Z}/n_{j}\mathbb{Z}}$ in
the corresponding summand). Let $q\in\operatorname*{Quad}(G,M)$ be a quadratic
form and%
\begin{equation}
b(x,y):=q(x+y)-q(x)-q(y) \label{lzwkx1}%
\end{equation}
its polarization. Define%
\[
\sigma_{i,j}:=\left\{
\begin{array}
[c]{ll}%
b(\mathsf{e}_{i},\mathsf{e}_{j}) & \text{if }i<j\\
q(\mathsf{e}_{i}) & \text{if }i=j\\
0 & \text{if }i>j\text{.}%
\end{array}
\right.
\]
Then the pair $(h,c)$ with%
\[
h(x,y,z):=\sum_{\substack{j\in J_{2}\\\operatorname*{with}\text{ }y_{j}%
+z_{j}\geq n_{j}}}x_{j}n_{j}\sigma_{j,j}\qquad\text{and}\qquad c(x,y):=\sum
_{\substack{i,j\in J\\\operatorname*{with}\text{ }i\leq j}}x_{i}y_{j}%
\sigma_{i,j}%
\]
defines an abelian $3$-cocycle such that the trace map of Equation
\ref{lbjca1} sends it to the given quadratic form $q$. Here $x_{j}$ (resp.
$y_{j},z_{j})$ refers to coordinates with values $x_{j}\in\mathbb{Z}$ for
$j\in J_{1}$ resp. $x_{j}\in\{0,1,2,\ldots,n_{j}-1\}$ for $j\in J_{2}$. The
map $q\mapsto(h,c)$ is linear, so it provides a group homomorphism
$\operatorname*{Quad}(G,M)\rightarrow Z_{ab}^{3}(G,M)$, which makes Diagram
\ref{l_fig1} commute.
\end{theorem}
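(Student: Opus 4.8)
The plan is to obtain this as a direct specialization of the master formula in Theorem \ref{thm_GeneralFormulaWithAP}, feeding it the concrete optimal admissible presentation built in Lemma/Construction \ref{lemma_ConstructAP} together with the most naive possible choice of admissible lifts. Since $G$ is a direct sum of cyclic groups, that lemma applies and supplies everything except the lifts, so the whole argument reduces to unwinding two bilinear expressions and performing one genuine check.

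Concretely, I would first invoke Lemma \ref{lemma_ConstructAP} to get the free group $F_0=\bigoplus_{j\in J}\mathbb{Z}$ with basis $(e_j)$, the surjection $\pi(e_j)=\mathsf{e}_j$, kernel $F_1$ generated by the elements $n_j e_j$ for $j\in J_2$, and the bilinear form $C$ pinned down on basis vectors by Equation \ref{lwbxy1}. Using $Q(e_i)=q(\mathsf{e}_i)$ and $B(e_i,e_j)=b(\mathsf{e}_i,\mathsf{e}_j)$, this reads exactly $C(e_i,e_j)=\sigma_{i,j}$, matching the theorem's notation. For the admissible lift I would take the canonical representatives: write $x\in G$ in coordinates with $x_j\in\mathbb{Z}$ for $j\in J_1$ and $x_j\in\{0,\ldots,n_j-1\}$ for $j\in J_2$, and set $\widetilde{x}:=\sum_j x_j e_j$ (so $\widetilde{0}=0$ as required). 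Then $c(x,y)=C(\widetilde{x},\widetilde{y})$ expands by $\mathbb{Z}$-bilinearity to $\sum_{i\leq j}x_i y_j\,\sigma_{i,j}$, which is the asserted formula essentially on sight.

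The substantive computation is $h$. I would evaluate $L(y,z)=\widetilde{(y+z)}-\widetilde{y}-\widetilde{z}$ coordinate by coordinate: for $j\in J_1$ the canonical lift is additive, so those coordinates contribute $0$; for $j\in J_2$ the $j$-th coordinate equals $[y_j+z_j]_{n_j}-(y_j+z_j)$, which is $0$ when $y_j+z_j<n_j$ and $-n_j$ otherwise, since $0\leq y_j+z_j<2n_j$. Hence $L(y,z)=-\sum_{j\in J_2,\ y_j+z_j\geq n_j} n_j e_j$, and substituting into $h(x,y,z)=-C(\widetilde{x},L(y,z))$ yields $\sum_{j:\,y_j+z_j\geq n_j} n_j\sum_{i\leq j}x_i\,\sigma_{i,j}$.

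The main obstacle — the only place where something must actually be argued rather than merely expanded — is that every off-diagonal term $i<j$ in this last sum drops out. For $i<j$ we have $n_j\sigma_{i,j}=n_j\,b(\mathsf{e}_i,\mathsf{e}_j)=b(\mathsf{e}_i,n_j\mathsf{e}_j)$ by bilinearity of $b$ on $G$, and $n_j\mathsf{e}_j=0$ in $G$ for $j\in J_2$, so the term vanishes. Only the diagonal $i=j$ survives, leaving $h(x,y,z)=\sum_{j\in J_2,\ y_j+z_j\geq n_j}x_j n_j\,\sigma_{j,j}$ exactly as stated. That $(h,c)$ is an abelian $3$-cocycle whose trace is $q$ is then immediate from Theorem \ref{thm_GeneralFormulaWithAP}, since the presentation is optimal admissible. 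Finally, linearity of $q\mapsto(h,c)$ is clear because $F_0$, $\pi$, the chosen lifts, and the function $L$ are all independent of $q$; only $C$ depends on $q$, through the coefficients $\sigma_{i,j}$, which are linear in $q$, while $h$ and $c$ are $\mathbb{Z}$-linear in $C$. This gives the claimed homomorphism $\operatorname*{Quad}(G,M)\rightarrow Z_{ab}^{3}(G,M)$ lifting the inverse Eilenberg--Mac Lane map and makes Diagram \ref{l_fig1} commute.
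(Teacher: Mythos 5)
Your proposal is correct and follows essentially the same route as the paper: the same optimal admissible presentation from Lemma/Construction \ref{lemma_ConstructAP}, the same canonical reduced-coordinate lift, the same coordinatewise evaluation of $L$, and the same observation that the off-diagonal terms die because $n_j\sigma_{i,j}=b(\mathsf{e}_i,n_j\mathsf{e}_j)=0$. The only cosmetic difference is that you phrase the $J_2$-coordinates of $L$ via remainders $[\,\cdot\,]_{n_j}$ where the paper uses floor functions, and you additionally spell out the linearity of $q\mapsto(h,c)$, which the paper leaves implicit.
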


\begin{proof}
[Proof of Theorem \ref{thm_QuinnFormula}]\textit{(Step 1)} Given the format of
our input abelian group $G$, we can use Lemma/Construction
\ref{lemma_ConstructAP} to set up an optimal admissible presentation. Let us
quickly walk through the relevant steps of the construction, adapted to our
setting:\ Define%
\[
F_{1}:=\bigoplus_{j\in J_{2}}\,\mathbb{Z}\qquad\text{and}\qquad F_{0}%
:=\bigoplus_{j\in J_{1}\cup J_{2}}\,\mathbb{Z}%
\]
(with $J_{1}$ and $J_{2}$ as in Equation \ref{lquinn1}) and this sets up a
resolution%
\[
0\longrightarrow F_{1}\longrightarrow F_{0}\overset{\pi}{\longrightarrow
}G\longrightarrow0\text{.}%
\]
Write $(e_{j})_{j\in J}$ for the standard basis of $F_{0}$, i.e. the $j$-th
summand $\mathbb{Z}$ is spanned by $e_{j}$ (so that $\mathsf{e}_{j}=\pi e_{j}$
in terms of the elements in the statement of the theorem). As in Lemma
\ref{lemma_ConstructAP}, define $B(x,y)=b(\pi x,\pi y)$ and $Q(x)=q(\pi x)$
and then%
\begin{equation}
\sigma_{i,j}:=C(e_{i},e_{j})=\left\{
\begin{array}
[c]{ll}%
B(e_{i},e_{j}) & \text{if }i<j\\
Q(e_{i}) & \text{if }i=j\\
0 & \text{if }i>j
\end{array}
\right.  \label{lziv3}%
\end{equation}
describes a $\mathbb{Z}$-bilinear form on $F_{0}$ (this is the same as in the
construction given loc. cit.). As guaranteed by the quoted lemma, $(F_{0}%
,\pi,C)$ is an optimal admissible presentation. \textit{(Step 2)} Each element
of $G$ has a unique presentation as%
\[
g=\sum_{j\in J}g_{j}\pi(e_{j})\qquad\text{with}\qquad g_{j}\in\{0,1,\ldots
,n_{j}-1\}\text{ if }j\in J_{2}%
\]
and $g_{j}\in\mathbb{Z}$ if $j\in J_{1}$. Sending this $g$ to the vector%
\[
\widetilde{g}:=\sum_{j\in J}g_{j}e_{j}\in F_{0}%
\]
pins down an admissible lift in the sense of Definition \ref{def_AL}
(including $\widetilde{0}=0$). With respect to the basis $(e_{j})_{j\in J}$ we
can write $C(-,-)$ as%
\begin{equation}
C(x,y)=\sum_{i,j\in J}x_{i}y_{j}C(e_{i},e_{j})=\sum_{i\leq j}x_{i}y_{j}%
\sigma_{i,j}\text{.} \label{lziv3mx}%
\end{equation}
We can write the admissible lift coordinate-wise for any vector $x\in G$ as%
\begin{equation}
(\widetilde{x})_{j}=\left\{
\begin{array}
[c]{ll}%
x_{j} & \text{for }j\in J_{1}\\
x_{j}-n_{j}\left\lfloor \frac{x_{j}}{n_{j}}\right\rfloor  & \text{for }j\in
J_{2}\text{.}%
\end{array}
\right.  \label{lviaps1}%
\end{equation}
In particular,%
\begin{align*}
L(x,y)_{j}  &  =(\widetilde{x+y})_{j}-(\widetilde{x})_{j}-(\widetilde{y}%
)_{j}\\
&  =\left\{
\begin{array}
[c]{ll}%
0 & \text{for }j\in J_{1}\\
-n_{j}\left(  \left\lfloor \frac{x_{j}+y_{j}}{n_{j}}\right\rfloor
-\left\lfloor \frac{x_{j}}{n_{j}}\right\rfloor -\left\lfloor \frac{y_{j}%
}{n_{j}}\right\rfloor \right)  & \text{for }j\in J_{2}\text{.}%
\end{array}
\right.
\end{align*}
Now assume we are given $x,y,z\in F_{0}$ such that the coordinates satisfy the
bound $x_{j}\in\{0,1,\ldots,n_{j}-1\}$ for all $j\in J_{2}$ (this will
simplify the formulas). Demand the same for $y_{j}$ resp. $z_{j}$. There is no
condition if $j\in J_{1}$. Invoke Theorem\ \ref{thm_GeneralFormulaWithAP} to
obtain that the pair $(h,c)$ with%
\[
h(x,y,z):=-C(\widetilde{x},L(y,z))\qquad\text{and}\qquad c(x,y):=C(\widetilde
{x},\widetilde{y})
\]
is an abelian $3$-cocycle mapping to $q$ under the trace map. Next, let us
unravel these expressions. Expand $h$ using Equation \ref{lziv3mx} to%
\begin{align}
h(x,y,z)  &  =-\sum_{i\leq j}x_{i}L(y,z)_{j}\sigma_{i,j}\label{lviaps3}\\
&  =\sum_{i\leq j\text{ with }j\in J_{2}}x_{i}\left(  \left\lfloor \frac
{y_{j}+z_{j}}{n_{j}}\right\rfloor -\left\lfloor \frac{y_{j}}{n_{j}%
}\right\rfloor -\left\lfloor \frac{z_{j}}{n_{j}}\right\rfloor \right)
n_{j}\sigma_{i,j}\text{.} \label{lviaps10a}%
\end{align}
Since $x_{j},y_{j}\in\{0,1,\ldots,n_{j}-1\}$ for every $j\in J_{2}$, we may
rewrite the expression for $h$ as%
\begin{equation}
h(x,y,z)=\sum_{i\leq j\text{ with }j\in J_{2}}x_{i}\left\{
\begin{array}
[c]{cc}%
0 & \text{if }y_{j}+z_{j}<n_{j}\\
1 & \text{if }y_{j}+z_{j}\geq n_{j}%
\end{array}
\right\}  n_{j}\sigma_{i,j} \label{lchiav1}%
\end{equation}
and this simplifies to%
\[
=\sum_{\substack{i\leq j\\\text{with }j\in J_{2}\text{ and }y_{j}+z_{j}\geq
n_{j}}}x_{i}n_{j}\sigma_{i,j}\text{.}%
\]
Finally, if $i\neq j$, we have by Equation \ref{lziv3} and the bilinearity of
$B$ that%
\begin{equation}
n_{j}\sigma_{i,j}=n_{j}B(e_{i},e_{j})=B(e_{i},n_{j}e_{j})=b(\pi e_{i}%
,\pi(n_{j}e_{j}))=0 \label{lviaps4}%
\end{equation}
since $n_{j}e_{j}\in\ker(\pi)$. Hence,%
\[
h(x,y,z)=\sum_{\substack{j\in J_{2}\\\text{with }y_{j}+z_{j}\geq n_{j}}%
}x_{j}n_{j}\sigma_{j,j}\text{.}%
\]
This finishes the proof.
\end{proof}

\begin{example}
Note that along the way, we have found some other possibly useful
presentations of the $3$-cocycle. For example, Equation \ref{lviaps10a}
expresses the associator for arbitrary representatives/lifts $x_{i}%
,y_{i},z_{i}\in\mathbb{Z}$.
\end{example}

\begin{example}
[\cite{MR2755172}]A lively description how one attaches an abelian $3$-cocycle
to a quadratic form is also given by Kapustin and Saulina in \cite[\S 3.2]%
{MR2755172} (again in the situation with $G$ finite). For readers familiar
with their paper, let us note that $\vec{A}\odot\vec{B}$ (in their notation)
corresponds to our $\widetilde{x+y}$. They obtain the formula%
\[
h(x,y,z)=\sum_{i}n_{i}x_{i}\left\lfloor \frac{y_{i}+z_{i}}{n_{i}}\right\rfloor
\sigma_{i,i}%
\]
at the end of \cite[\S 3]{MR2755172}. This is Equation \ref{lchiav1}, again
with the summands $i\neq j$ removed by the same argument as in Equation
\ref{lviaps4}.
\end{example}

\section{Abelian $3$-cocycle formulas in exponential format}

Aside from Quinn's formula, a lot of literature prefers to explicitly spell
out the abelian $3$-cocycle in terms of exponential functions when
$M:=\mathbb{C}^{\times}$. Let us also provide this.

\begin{theorem}
[Exponential format $3$-cocycles]\label{thm_ExplicitAbelian3Cocycles}Suppose%
\begin{equation}
G=\bigoplus_{k\in J}\mathbb{Z}/n_{k}\mathbb{Z} \label{lviaps8}%
\end{equation}
for $n_{k}\geq1$ and $J$ some totally ordered index set. Write $(e_{k})_{k\in
J}$ for the generator $1$ of the $k$-th summand. Then there is a bijection
between the following three sets:

\begin{enumerate}
\item All possible choices of values

\begin{itemize}
\item $p^{(k)}\in\{0,1,\ldots,\gcd(n_{k}^{2},2n_{k})-1\}$ for every $k\in J$,

\item $q^{(k,l)}\in\{0,1,\ldots,\gcd(n_{k},n_{l})-1\}$ for all $k<l$ with
$k,l\in J$.
\end{itemize}

\item All quadratic forms $q\in\operatorname*{Quad}(G,\mathbb{C}^{\times})$,
uniquely described by the following properties%
\begin{align*}
q(e_{k})  &  =\exp\left(  \frac{2\pi i}{\gcd(n_{k}^{2},2n_{k})}p^{(k)}\right)
\text{,}\\
b(e_{k},e_{l})  &  =\exp\left(  \frac{2\pi i}{\gcd(n_{k},n_{l})}%
q^{(k,l)}\right)  \qquad\text{(for }k<l\text{),}%
\end{align*}
where $b$ is the polarization of $q$ (and further we necessarily then have
$b(e_{k},e_{l})=b(e_{l},e_{k})$ for $k>l$ and $b(e_{k},e_{k})=2q(e_{k})$ as well).

\item All abelian $3$-cocycles $(h,c)\in H_{ab}^{3}(G,\mathbb{C}^{\times})$,
uniquely pinned down by the cocycle representative%
\begin{align}
c(x,y)  &  =\prod_{k<l}\exp\left(  \frac{2\pi iq^{(k,l)}}{\gcd(n_{k},n_{l}%
)}x_{k}y_{l}\right) \label{lviaps9}\\
&  \qquad\cdot\prod_{k}\exp\left(  \frac{2\pi ip^{(k)}}{\gcd(2n_{k},n_{k}%
^{2})}x_{k}y_{k}\right)  \text{,}\nonumber
\end{align}
and%
\[
h(x,y,z)=\prod_{k}\exp\left(  \frac{2\pi ip^{(k)}}{\gcd(2n_{k},n_{k}^{2}%
)}\left(  x_{k}\left(  [y_{k}]_{n_{k}}+[z_{k}]_{n_{k}}-[y_{k}+z_{k}]_{n_{k}%
}\right)  \right)  \right)  \text{,}%
\]
where $x_{k}$ (resp. $y_{k},z_{k}$) denotes the coordinates of vectors
$x,y,z\in G$ according to Equation \ref{lviaps8}. Here $[-]_{n_{k}}$ refers to
the remainder of division by $n_{k}$, expressed as an element in
$\{0,1,\ldots,n_{k}-1\}$.
\end{enumerate}

Really, $\operatorname*{Quad}(G,\mathbb{C}^{\times})$ and $H_{ab}%
^{3}(G,\mathbb{C}^{\times})$ are abelian groups and the above bijections are
abelian group isomorphisms, given in terms of the parameters $p^{(k)}%
,q^{(k,l)}$ by elementwise addition in the quotient groups (i.e.
$\mathbb{Z}/(n_{k}^{2},2n_{k})$ for $p^{(k)}$ etc.).\newline The map
$q\mapsto(h,c)$ is linear, so it provides a group homomorphism
$\operatorname*{Quad}(G,M)\rightarrow Z_{ab}^{3}(G,M)$, which makes Diagram
\ref{l_fig1} commute.
\end{theorem}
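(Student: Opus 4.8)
The plan is to read the three sets as three presentations of one and the same abelian group and to assemble the two bijections \((1)\leftrightarrow(2)\) and \((2)\leftrightarrow(3)\) out of results already in hand. The bijection \((2)\leftrightarrow(3)\) is almost free: Theorem~\ref{thm_EilenbergMacLaneIso1} already asserts that the trace \(\operatorname{tr}\colon H_{ab}^3(G,\mathbb{C}^\times)\to\operatorname{Quad}(G,\mathbb{C}^\times)\) is an isomorphism of abelian groups, so the only work left on that side is to exhibit an explicit cocycle representative for the class attached to a given \(q\) and to verify that \(c(x,x)\) returns \(q\). Thus I would concentrate on the parametrization \((1)\leftrightarrow(2)\) and then transport the explicit formula through \(\operatorname{tr}\).

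First I would set up the evaluation map \((2)\to(1)\), \(q\mapsto(p^{(k)},q^{(k,l)})\), and check it is well defined by reading off the torsion relations that fix the parameter ranges. From \(q(n_ke_k)=n_k^2q(e_k)\) (Example~\ref{example_QuadScalar}) and \(n_ke_k=0\) one gets \(q(e_k)^{n_k^2}=1\), while from \(b(e_k,e_k)=2q(e_k)\) together with \(n_kb(e_k,e_k)=b(n_ke_k,e_k)=0\) one gets \(q(e_k)^{2n_k}=1\); hence \(q(e_k)\) lies in the cyclic group of \(\gcd(n_k^2,2n_k)\)-th roots of unity, which is exactly the range of \(p^{(k)}\). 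Likewise \(n_kb(e_k,e_l)=n_lb(e_k,e_l)=0\) forces \(b(e_k,e_l)\) to be a \(\gcd(n_k,n_l)\)-th root of unity, matching \(q^{(k,l)}\). Since the residue encoding \(\{0,\dots,N-1\}\to\mu_N\) is bijective, the map is injective once I know a quadratic form is determined by the data \(\{q(e_k)\}\) and \(\{b(e_k,e_l)\}_{k<l}\); that determination is the identity \(q(x)=\prod_k q(e_k)^{x_k^2}\prod_{k<l}b(e_k,e_l)^{x_kx_l}\), obtained from \(q(ne)=n^2q(e)\) and the bilinearity of \(b\).

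The genuinely substantive point, and the step I expect to cost the most, is surjectivity of \((2)\to(1)\): every admissible parameter tuple must arise from an honest quadratic form. I would prove this by promoting the determination identity to a \emph{definition}, setting \(q(x):=\prod_k q(e_k)^{[x_k]_{n_k}^2}\prod_{k<l}b(e_k,e_l)^{[x_k]_{n_k}[x_l]_{n_l}}\) on the reduced representatives, where \(q(e_k),b(e_k,e_l)\) are the roots of unity prescribed by the parameters. The chosen ranges are precisely what makes each factor independent of the representative modulo \(n_k\) (a short congruence check using that \(\gcd(n_k^2,2n_k)\) divides both \(n_k^2\) and \(2n_k\), and that \(\gcd(n_k,n_l)\) divides \(n_k\) and \(n_l\)). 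To see \(q\) is quadratic I would argue factorwise: each diagonal factor is a quadratic form on \(\mathbb{Z}/n_k\) by the single-generator computation, the cross part equals \(\beta(x,x)\) for the bilinear form \(\beta(x,y)=\prod_{k<l}b(e_k,e_l)^{x_ky_l}\) and hence is quadratic with polarization \(\beta(x,y)\beta(y,x)\), and a product of quadratic forms is again quadratic. Evaluating on generators recovers the prescribed \(q(e_k)\) and \(b(e_k,e_l)\), closing the loop and giving \((1)\cong(2)\).

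Finally I would produce the explicit cocycle of \((3)\) and check commutativity of Diagram~\ref{l_fig1}. Feeding the quadratic form just constructed into the Generalized Quinn formula (Theorem~\ref{thm_QuinnFormula}) with \(J_1=\varnothing\) yields a cocycle \((h,c)\in Z_{ab}^3(G,\mathbb{C}^\times)\) with \(\sigma_{j,j}=q(e_j)\) and \(\sigma_{i,j}=b(e_i,e_j)\) for \(i<j\). Rewritten multiplicatively, the summation condition \(y_j+z_j\ge n_j\) becomes the carry \([y_j]_{n_j}+[z_j]_{n_j}-[y_j+z_j]_{n_j}=n_j\), which is exactly how the stated \(h\) is written, and the \(i\le j\) sum defining \(c\) splits into the two products displayed in \((3)\). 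Because Theorem~\ref{thm_QuinnFormula} already records that \(q\mapsto(h,c)\) is linear, lands in \(Z_{ab}^3\), and has trace \(q\), the composite \((1)\cong(2)\xrightarrow{\sim}(3)\) is an isomorphism of abelian groups making Diagram~\ref{l_fig1} commute; the group law on \((1)\) is elementwise addition in the cyclic quotients precisely because \(q(e_k)\) and \(b(e_k,e_l)\) are additive in \(q\).
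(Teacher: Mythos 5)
Your proposal is correct, and the overall architecture (parametrize $\operatorname{Quad}(G,\mathbb{C}^\times)$ by the tuples $(p^{(k)},q^{(k,l)})$, then transport through the trace isomorphism via an explicit cocycle) matches the paper's. The one place where you genuinely diverge is the surjectivity of the evaluation map $(2)\to(1)$, which is indeed the substantive step. The paper (Lemma \ref{lemma_ParametrizeQuadForms}) first reduces to finite $J$ by a colimit argument and then \emph{counts}: it invokes Whitehead's universal quadratic functor $\Gamma$ and the decomposition $\operatorname{Quad}(A\oplus B,\mathbb{C}^\times)\cong\operatorname{Quad}(A,\mathbb{C}^\times)\oplus\operatorname{Quad}(B,\mathbb{C}^\times)\oplus\operatorname{Hom}(A\otimes B,\mathbb{C}^\times)$ to see that the (finite) source and target of the injective evaluation map have equal cardinality. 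You instead \emph{construct} the preimage directly by promoting the determination identity $q(x)=\prod_k q(e_k)^{[x_k]_{n_k}^2}\prod_{k<l}b(e_k,e_l)^{[x_k]_{n_k}[x_l]_{n_l}}$ to a definition and verifying factorwise that it is quadratic (each diagonal factor being the pullback of a quadratic form on $\mathbb{Z}/n_k$, the cross part being $\beta(x,x)$ for a bilinear $\beta$). Your route is more elementary, avoids the appeal to $\Gamma$, and handles infinite $J$ uniformly without the colimit reduction, since all products are finite on any given element; the paper's route is shorter on verification and exhibits the structural splitting of $\operatorname{Quad}$ as a byproduct. For the cocycle in $(3)$ you take a legitimate shortcut: rather than rerunning the admissible-presentation computation with the exponential $\sigma_{k,l}$ as the paper does, you feed the constructed form into Theorem \ref{thm_QuinnFormula} with $J_1=\varnothing$ and translate the carry condition $y_j+z_j\geq n_j$ into $[y_j]_{n_j}+[z_j]_{n_j}-[y_j+z_j]_{n_j}=n_j$; this yields the same formulas, and the linearity and trace statements are inherited from that theorem exactly as you say.
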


Before we prove this, let us first establish an explicit parametrization of
the quadratic forms.

We apologize that the following repeats part of the statement of the above
theorem, but we prefer to be clear about what we prove here amidst a lot of notation.

\begin{lemma}
\label{lemma_ParametrizeQuadForms}Suppose%
\[
G=\bigoplus_{k\in J}\mathbb{Z}/n_{k}\mathbb{Z}%
\]
for $n_{k}\geq1$ and $J$ some totally ordered index set. Then all elements of
$\operatorname*{Quad}(G,\mathbb{C}^{\times})$ are in bijection to all possible choices

\begin{enumerate}
\item $p^{(k)}\in\{0,1,\ldots,\gcd(n_{k}^{2},2n_{k})-1\}$ for every $k\in J$;

\item $q^{(k,l)}\in\{0,1,\ldots,\gcd(n_{k},n_{l})-1\}$ for all $k<l$ with
$k,l\in J$.
\end{enumerate}

Once these choices are made, the corresponding quadratic form and its
polarization satisfy%
\begin{align*}
q(e_{k})  &  =\exp\left(  \frac{2\pi i}{\gcd(n_{k}^{2},2n_{k})}p^{(k)}\right)
\text{,}\\
b(e_{k},e_{l})  &  =\exp\left(  \frac{2\pi i}{\gcd(n_{k},n_{l})}%
q^{(k,l)}\right)  \qquad\text{(for }k<l\text{)}%
\end{align*}
(and then necessarily $b(e_{k},e_{l})=b(e_{l},e_{k})$ for $k>l$ and
$b(e_{k},e_{k})=2q(e_{k})$ as well).
\end{lemma}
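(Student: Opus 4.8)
The plan is to show that a quadratic form $q\in\operatorname{Quad}(G,\mathbb{C}^{\times})$ is completely pinned down by the indexed package of values $q(e_{k})$ and $b(e_{k},e_{l})$ for $k<l$, and then to determine exactly which such packages occur. The key observation, writing the group law of $M=\mathbb{C}^{\times}$ additively for readability, is the expansion formula
\[
q\Bigl(\sum_{k}x_{k}e_{k}\Bigr)=\sum_{k}x_{k}^{2}\,q(e_{k})+\sum_{k<l}x_{k}x_{l}\,b(e_{k},e_{l}),
\]
valid for any integer representatives $x_{k}$. I would prove this by adding one coordinate at a time, using $q(a+b)=q(a)+q(b)+b(a,b)$ together with $q(nx)=n^{2}q(x)$ and the bilinearity of $b$ (Example \ref{example_QuadScalar}). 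This formula immediately gives injectivity of the assignment $q\mapsto\bigl((q(e_{k}))_{k},(b(e_{k},e_{l}))_{k<l}\bigr)$: two quadratic forms agreeing on the generators and having the same cross-polarizations coincide.

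Next I would read off the constraints these values must satisfy. Since $n_{k}e_{k}=0$ in $G$, applying $q(n_{k}e_{k})=n_{k}^{2}q(e_{k})$ forces $q(e_{k})$ to be $n_{k}^{2}$-torsion, while $n_{k}b(e_{k},e_{k})=b(n_{k}e_{k},e_{k})=0$ together with $b(e_{k},e_{k})=2q(e_{k})$ (again Example \ref{example_QuadScalar}) forces $2n_{k}q(e_{k})=0$. Hence $q(e_{k})$ lies in the $\gcd(n_{k}^{2},2n_{k})$-torsion subgroup $\mu_{\gcd(n_{k}^{2},2n_{k})}$ of $\mathbb{C}^{\times}$ — this is the source of the a priori surprising factor $2n_{k}$. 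Likewise $b(e_{k},e_{l})$ is killed by both $n_{k}$ and $n_{l}$, hence lies in $\mu_{\gcd(n_{k},n_{l})}$. Under the isomorphism $\mu_{d}\cong\mathbb{Z}/d$, $\,m\mapsto\exp(2\pi i m/d)$, these torsion conditions are exactly the stated ranges for $p^{(k)}$ and $q^{(k,l)}$.

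For surjectivity I would run the expansion formula in reverse: given any admissible choice of $p^{(k)},q^{(k,l)}$, define $q(e_{k})$ and $b(e_{k},e_{l})$ ($k<l$) by the exponential formulas, extend $b$ to all pairs by symmetry and $b(e_{k},e_{k}):=2q(e_{k})$, and define $q$ on a general element by the displayed formula using chosen integer representatives of its coordinates. The one genuinely non-trivial point — and the main obstacle — is well-definedness: I must check that replacing a representative $x_{k}$ by $x_{k}+n_{k}$ leaves the right-hand side unchanged. The resulting change is governed by the terms $(2n_{k}x_{k}+n_{k}^{2})q(e_{k})$ and $n_{k}b(e_{k},e_{l})$, and these vanish precisely because of the two torsion conditions just imposed (the $2n_{k}$- and $n_{k}^{2}$-torsion of $q(e_{k})$, and the $n_{k}$-torsion of $b(e_{k},e_{l})$). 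This is exactly the place where the peculiar modulus $\gcd(n_{k}^{2},2n_{k})$ earns its keep.

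Once well-definedness is in hand, verifying that the formula yields a bona fide quadratic form is routine: $q(-x)=q(x)$ is immediate since only squares $x_{k}^{2}$ and products $x_{k}x_{l}$ appear, and a direct expansion of $q(x+y)-q(x)-q(y)$ collapses (using $b(e_{k},e_{k})=2q(e_{k})$ and the symmetric extension of $b$) to $\sum_{k,l}x_{k}y_{l}\,b(e_{k},e_{l})$, which is manifestly bilinear and descends to $G\times G$ by the same torsion conditions. By construction its invariants are the prescribed $q(e_{k})$ and $b(e_{k},e_{l})$, so this inverts the injection from the first step, completing the bijection. The forced relations $b(e_{l},e_{k})=b(e_{k},e_{l})$ for $k>l$ and $b(e_{k},e_{k})=2q(e_{k})$ are just the symmetry of the polarization and Example \ref{example_QuadScalar}, respectively.
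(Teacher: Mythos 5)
Your proposal is correct, but it takes a genuinely different route from the paper on the key point of surjectivity. The paper first reduces to finite $J$ by a colimit argument, derives the same torsion constraints (though via pulling $q$ back along $\mathbb{Z}\twoheadrightarrow\mathbb{Z}/n_k\mathbb{Z}$ and classifying quadratic forms on $\mathbb{Z}$ as $x\mapsto x^2m$, rather than your direct computation with $q(n_ke_k)=0$ and $n_kb(e_k,e_k)=0$), asserts injectivity of $q\mapsto(q(e_k),b(e_k,e_l))$ as clear, and then establishes surjectivity by a \emph{counting} argument: using $\operatorname{Quad}(G,\mathbb{C}^{\times})=\operatorname{Hom}(\Gamma G,\mathbb{C}^{\times})$ and the quadratic-functor decomposition $\Gamma(A\oplus B)\cong\Gamma(A)\oplus\Gamma(B)\oplus(A\otimes B)$, it computes that $\operatorname{Quad}(G,\mathbb{C}^{\times})$ has the same (finite) cardinality as the parameter set, so the injection must be a bijection. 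You instead construct the quadratic form explicitly via the expansion $q(\sum_kx_ke_k)=\sum_kx_k^2q(e_k)+\sum_{k<l}x_kx_lb(e_k,e_l)$ and verify well-definedness under $x_k\mapsto x_k+n_k$ using exactly the imposed torsion conditions. Your approach buys three things: it makes the injectivity step honest (the paper's ``clear'' is precisely your expansion formula), it produces an explicit inverse rather than a mere existence statement, and it works directly for infinite $J$ without the Step~0 reduction (each element of the direct sum has finite support, so all sums are finite). The paper's approach is shorter if one is willing to import the structure theory of Whitehead's universal quadratic functor $\Gamma$, but it is non-constructive. Your well-definedness computation, where $(2n_kx_k+n_k^2)q(e_k)$ and $n_kb(e_k,e_l)$ vanish, is sound and correctly identifies why the modulus $\gcd(n_k^2,2n_k)$ appears.
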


\begin{proof}
\textit{(Step 0) }It suffices to prove this for $J$ finite, since we can write
$G$ as the colimit over all finite subsets $J_{0}\subset J$, and
correspondingly the subset of parameters $p^{(k)},q^{(k,l)}$ with $k,l\in
J_{0}$. This is compatible under inclusion of finite subsets of $J$%
.\newline\textit{(Step 1) }So assume $J$ finite. Let $q\in\operatorname*{Quad}%
(G,\mathbb{C}^{\times})$ be arbitrary. We first claim that $q(e_{k})$ must be
a $\gcd(n_{k}^{2},2n_{k})$-torsion element in $\mathbb{C}^{\times}$, i.e.%
\begin{equation}
q(e_{k})=\exp\left(  \frac{2\pi i}{\gcd(n_{k}^{2},2n_{k})}p^{(k)}\right)
\label{ccag1}%
\end{equation}
for some uniquely determined $p^{(k)}\in\{0,1,\ldots,\gcd(n_{k}^{2}%
,2n_{k})-1\}$. The proof for this goes as follows: The generator $e_{k}$ spans
a subgroup $\iota:\mathbb{Z}/n_{k}\mathbb{Z}\hookrightarrow G$, so we can pull
the quadratic form back to this subgroup.%
\begin{equation}
\operatorname*{Quad}(G,\mathbb{C}^{\times})\overset{\iota^{\ast}%
}{\longrightarrow}\operatorname*{Quad}(\mathbb{Z}/n_{k}\mathbb{Z}%
,\mathbb{C}^{\times})\cong\operatorname*{Hom}(\mathbb{Z}/(2n_{k},n_{k}%
^{2})\mathbb{Z},\mathbb{C}^{\times}) \label{lviaps5}%
\end{equation}
The last isomorphism, the determination of quadratic forms on $\mathbb{Z}%
/n_{k}\mathbb{Z}$, goes back to Whitehead. We explain the argument: One can
further pull back along $\mathbb{Z}\twoheadrightarrow\mathbb{Z}/n_{k}%
\mathbb{Z}$ and it is easy to see that any quadratic form on $\mathbb{Z}$ must
have the shape%
\begin{equation}
q^{\prime}(x):=x^{2}m \label{lviaps6}%
\end{equation}
for some $m\in\mathbb{C}^{\times}$ (e.g., use Example \ref{example_QuadScalar}%
). Here and for the rest of this sub-argument, we stick to the additive
notation. One then only needs to check for which $m$ such a $q^{\prime}$
descends to a quadratic form on the quotient $\mathbb{Z}/n_{k}\mathbb{Z}$,
which will then give Equation \ref{lviaps5}. We claim that this holds whenever
$m$ is a $\gcd(2n_{k},n_{k}^{2})$-torsion element in $\mathbb{C}^{\times}$.

Necessity: Suppose it descends. Then $0=q^{\prime}(0)=q^{\prime}(n_{k}%
)=n_{k}^{2}q^{\prime}(1)$, again by Example \ref{example_QuadScalar}. Hence,
we must have $n_{k}^{2}m=0$ in $\mathbb{C}^{\times}$. Moreover, if $q^{\prime
}$ descends to $\mathbb{Z}/n_{k}\mathbb{Z}$, so does the polarization
$b^{\prime}:G\otimes_{\mathbb{Z}}G\rightarrow\mathbb{C}^{\times}$. Hence,%
\[
b^{\prime}(x,y)=2xym\text{,}%
\]
satisfies $b^{\prime}(x+Nn_{k},y)\equiv b^{\prime}(x,y)\,\left(
\operatorname{mod}n_{k}\right)  $ for all $N$. This forces $2n_{k}m=0$. Thus,
$m$ must be both $2n_{k}$- and $n_{k}^{2}$-torsion, i.e. $\gcd(2n_{k}%
,n_{k}^{2})$-torsion in $\mathbb{C}^{\times}$. Conversely, suppose it is. Then%
\begin{align*}
q^{\prime}(x+Nn_{k})  &  =(x+Nn_{k})^{2}m\\
&  =x^{2}m+2n_{k}Nxm+N^{2}n_{k}^{2}m\equiv x^{2}m
\end{align*}
since the second and third summand vanish because of the torsion assumption.

This proves the subclaim.\ That is: the valid choices for $m\in\mathbb{C}%
^{\times}$ in Equation \ref{lviaps6} are precisely the $\gcd(2n_{k},n_{k}%
^{2})$-torsion elements. However, these are precisely such as described in
Equation \ref{ccag1}. Next, by bilinearity $nb(e_{k},e_{l})=1$ once
$\gcd(n_{k},n_{l})\mid n$ (because $e_{k}$ is $n_{k}$-torsion and $e_{l}$ is
$n_{l}$-torsion), so we may write%
\[
b(e_{k},e_{l})=\exp\left(  \frac{2\pi i}{\gcd(n_{k},n_{l})}q^{(k,l)}\right)
\]
for some uniquely determined $q^{(k,l)}\in\{0,1,\ldots,\gcd(n_{k},n_{l}%
)-1\}$.\newline\textit{(Step 2)} We have now seen that $q(e_{k})$ and
$b(e_{k},e_{l})$ can only be of the described forms. This defines a
set-theoretic map%
\begin{equation}
\operatorname*{Quad}(G,\mathbb{C}^{\times})\longrightarrow\left\{
\begin{array}
[c]{l}%
\text{parameter values }p^{(k)}\text{,}q^{(k,l)}\\
\text{(in the ranges described)}%
\end{array}
\right\}  \text{.} \label{lviaps7}%
\end{equation}
It is also clear that this map is injective. It remains to check that
conversely any choice of parameters defines a quadratic form on $G$. Since we
already have an injective map, it suffices to count elements on the right
side. For any abelian groups $A,B$ we have%
\[
\operatorname*{Quad}(A\oplus B,\mathbb{C}^{\times})\cong\operatorname*{Quad}%
(A,\mathbb{C}^{\times})\oplus\operatorname*{Quad}(B,\mathbb{C}^{\times}%
)\oplus\operatorname*{Hom}(A\otimes_{\mathbb{Z}}B,\mathbb{C}^{\times})\text{.}%
\]
To see this, note that%
\[
\operatorname*{Quad}(G,\mathbb{C}^{\times})=\operatorname*{Hom}(\Gamma
G,\mathbb{C}^{\times})
\]
for Whitehead's universal quadratic functor $\Gamma$ and the latter is a
quadratic functor, i.e.%
\[
\Gamma(A\oplus B)\cong\Gamma(A)\oplus\Gamma(B)\oplus\left(  A\otimes B\right)
\text{.}%
\]
For details, we refer to \cite[Chapter I, \S 4]{MR1096295} (or the classic
\cite{MR35997}). Hence,%
\begin{align*}
\operatorname*{Quad}\left(  \bigoplus_{k}\mathbb{Z}/n_{k}\mathbb{Z}%
,\mathbb{C}^{\times}\right)   &  \cong\bigoplus_{k}\operatorname*{Quad}%
(\mathbb{Z}/n_{k}\mathbb{Z},\mathbb{C}^{\times})\\
&  \qquad\oplus\bigoplus_{k<l}\operatorname*{Hom}(\mathbb{Z}/\gcd(n_{k}%
,n_{l})\mathbb{Z},\mathbb{C}^{\times})
\end{align*}
and this unravels to%
\[
\cong\bigoplus_{k}\mathbb{Z}/(2n_{k},n_{k}^{2})\mathbb{Z}\oplus\bigoplus
_{k<l}\mathbb{Z}/\gcd(n_{k},n_{l})\mathbb{Z}\text{.}%
\]
For the first type of summands we have again used the isomorphism of Equation
\ref{lviaps5}, and for the second type of summand note that any linear map of
a torsion group to $\mathbb{C}^{\times}$ must have its image in the torsion of
$\mathbb{C}^{\times}$ and thus certainly in $U(1)$, and $\operatorname*{Hom}%
(-,U(1))$ is just the Pontryagin dual, which (non-canonically) can be
identified with the input abelian group. Without spelling out the actual
count, it is clear that this set has the same cardinality as our set of
parameter values on the right side in Equation \ref{lviaps7}. This finishes
the proof.
\end{proof}

Now the proof of Theorem \ref{thm_ExplicitAbelian3Cocycles} can be done in a
similar fashion to the one we used to obtain Quinn's formula.

\begin{proof}
[Proof of Theorem \ref{thm_ExplicitAbelian3Cocycles}]We follow the same proof
as for Theorem \ref{thm_QuinnFormula}, so let us just describe how certain
details need to be changed.\ Firstly, we are now in the special case
$M:=\mathbb{C}^{\times}$. Using our parametrization of quadratic forms of
Lemma \ref{lemma_ParametrizeQuadForms}, we may rewrite Equation \ref{lziv3} in
the concrete shape
\begin{equation}
\sigma_{k,l}=C(e_{k},e_{l})=\left\{
\begin{array}
[c]{ll}%
\exp\left(  \frac{2\pi i}{\gcd(n_{k},n_{l})}q^{(k,l)}\right)  & \text{if
}k<l\\
\exp\left(  \frac{2\pi i}{\gcd(2n_{k},n_{k}^{2})}p^{(k)}\right)  & \text{if
}k=l\\
0 & \text{if }k>l\text{.}%
\end{array}
\right.  \label{lviaps10}%
\end{equation}
Write $[x]_{n}$ for the remainder in $\{0,1,\ldots,n-1\}$ of $x\in\mathbb{Z}$
under division by $n$. Rewrite the admissible lifting in Equation
\ref{lviaps1} in the shape%
\[
(\widetilde{x})_{j}=[x]_{n_{j}}\text{.}%
\]
This is an admissible lift in the sense of Definition \ref{def_AL} (and only
optically different from the choice in the proof of Quinn's formula). We
compute%
\[
L(x,y)_{j}=[x_{j}+y_{j}]_{n_{j}}-[x_{j}]_{n_{j}}-[y_{j}]_{n_{j}}%
\]
and obtain%
\begin{align*}
h(x,y,z)  &  =\prod_{k\leq l}\sigma_{k,l}^{x_{k}\left(  [y_{l}]_{n_{l}}%
+[z_{l}]_{n_{l}}-[y_{l}+z_{l}]_{n_{l}}\right)  }\\
c(x,y)  &  =\prod_{k\leq l}\sigma_{k,l}^{x_{k}y_{l}}%
\end{align*}
because of the multiplicative notation in $\mathbb{C}^{\times}$. We readily
read off Equation \ref{lviaps9} for $c(x,y)$, just by plugging in the values
of $\sigma_{k,l}$ as provided by Equation \ref{lviaps10}. Similarly,%
\begin{align*}
h(x,y,z)  &  =\left(  \prod_{k<l}\exp\left(  \frac{2\pi iq^{(k,l)}}{\gcd
(n_{k},n_{l})}\left(  x_{k}\left(  [y_{l}]_{n_{l}}+[z_{l}]_{n_{l}}%
-[y_{l}+z_{l}]_{n_{l}}\right)  \right)  \right)  \right) \\
&  \qquad\cdot\left(  \prod_{k}\exp\left(  \frac{2\pi ip^{(k)}}{\gcd
(2n_{k},n_{k}^{2})}\left(  x_{k}\left(  [y_{k}]_{n_{k}}+[z_{k}]_{n_{k}}%
-[y_{k}+z_{k}]_{n_{k}}\right)  \right)  \right)  \right)  \text{.}%
\end{align*}
As in Equation \ref{lviaps4}, for $k<l$ the expression $[y_{l}]_{n_{l}}%
+[z_{l}]_{n_{l}}-[y_{l}+z_{l}]_{n_{l}}$ is a multiple of $n_{l}$, so the
entire input to the exponential function lies in $2\pi i\mathbb{Z}$. Thus,%
\[
h(x,y,z)=\prod_{k}\exp\left(  \frac{2\pi ip^{(k)}}{\gcd(2n_{k},n_{k}^{2}%
)}\left(  x_{k}\left(  [y_{k}]_{n_{k}}+[z_{k}]_{n_{k}}-[y_{k}+z_{k}]_{n_{k}%
}\right)  \right)  \right)  \text{,}%
\]
which is what we had claimed.
\end{proof}

\section{Constructing associators and
braidings\label{sect_AssociatorsAndBraidings}}

\subsection{Recollections}

As we had explained in the introduction, this note applies to both (a) pointed
braided fusion categories over a field $k$, as well as (b) braided categorical groups.

The reason for this is that both types of braided monoidal categories can be
classified in terms of (a slight generalization of) pre-metric groups as in
\cite{MR1250465}, \cite{MR2609644}.

\begin{definition}
[{\cite[\S 3]{MR1250465}}]\label{def_QuadTriple}A \emph{quadratic triple} is a
triple $(G,M,q)$, where $G,M$ are abelian groups and $q\in\operatorname*{Quad}%
(G,M)$ a quadratic form. A morphism $(G,M,q)\rightarrow(G^{\prime},M^{\prime
},q^{\prime})$ is a commutative diagram%
\begin{equation}%
%%%%%%%%%%%%%
%%%%%%%%%%
\xymatrix{
G \ar[r]^{f} \ar[d]_{q} & G^{\prime} \ar[d]^{q^{\prime}} \\
M \ar[r]_{g} & M^{\prime},
}
%%%%%%%%%%
\label{lviapsA2}%
\end{equation}
where $f,g$ are group homomorphisms. Write $\mathcal{Q}uad$ for the category
of quadratic triples. A \emph{symmetric triple} is a quadratic triple such
that the polarization of the quadratic form $q$ vanishes. Write $\mathcal{Q}%
uad_{sym}$ for the full subcategory of symmetric triples.
\end{definition}

Write $\mathcal{BCG}$ for the $1$-category whose objects are braided
categorical groups and whose morphisms are the equivalence classes of braided
monoidal functors.\footnote{In \cite{MR1250465} the category $\mathcal{BCG}$
is defined without identifying morphisms which only differ by equivalence.
This leads to the statement of the classification \cite[Theorem 3.3]%
{MR1250465} to sound more convoluted.}

\begin{theorem}
[{Joyal--Street \cite[\S 3]{MR1250465}}]%
\label{thm_JoyalStreetEquivBCGAndQuadTriples}There is an equivalence of
$1$-categories%
\begin{align}
T:\mathcal{BCG}  &  \longrightarrow\mathcal{Q}uad\label{lviapsA1}\\
(\mathsf{C},\otimes)  &  \longmapsto(\pi_{0}(\mathsf{C},\otimes),\pi
_{1}(\mathsf{C},\otimes),q)\text{,}\nonumber
\end{align}
where $q$ is defined as follows: For any object $X\in\mathsf{C}$ the
self-braiding $s_{X,X}:X\otimes X\overset{\sim}{\rightarrow}X\otimes X$
induces an automorphism of the tensor unit, namely%
\[
s_{X,X}\otimes X^{-1}\otimes X^{-1}:1_{\mathsf{C}}\overset{\sim}%
{\longrightarrow}1_{\mathsf{C}}\text{,}%
\]
and $q([X]):=(s_{X,X}\otimes X^{-1}\otimes X^{-1})\in\operatorname*{Aut}%
(1_{\mathsf{C}})\cong M$ extends to a well-defined quadratic form on $\pi
_{0}(\mathsf{C},\otimes)$.
\end{theorem}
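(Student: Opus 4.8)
The plan is to set up the standard dictionary between the coherence data of a braided categorical group and the cohomological data $(h,c)\in Z_{ab}^{3}(G,M)$, and then feed this into the Eilenberg--Mac Lane isomorphism of Theorem \ref{thm_EilenbergMacLaneIso1}. First I would replace $(\mathsf{C},\otimes)$ by a skeleton; since every object of a categorical group is invertible, in the skeleton the set of objects is exactly $G=\pi_{0}(\mathsf{C},\otimes)$ with $\otimes$ inducing the group law, and every nonempty hom-set is a torsor under $\operatorname{Aut}(1_{\mathsf{C}})\cong M=\pi_{1}(\mathsf{C},\otimes)$. Reading the associator $a_{X,Y,Z}$ and the braiding $s_{X,Y}$ as elements of $M$ produces maps $h\colon G^{3}\to M$ and $c\colon G^{2}\to M$. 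The content of the dictionary is then pure bookkeeping: the pentagon axiom is literally Equation \ref{lefmu1a}, the two hexagon axioms are literally the identities \ref{lx_a_1} and \ref{lx_a_2}, and the triangle and unit constraints let one normalize so that $h(x,0,z)=0$. Hence a skeletal braided categorical group with underlying groups $(G,M)$ is the same datum as an abelian $3$-cocycle $(h,c)$.

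Next I would identify when two such skeletal structures are braided monoidally equivalent through a functor inducing the identity on $\pi_{0}$ and $\pi_{1}$. Such an equivalence is the identity on objects and is pinned down by its monoidal structure isomorphism, a map $k\colon G^{2}\to M$ with $k(x,0)=k(0,y)=0$; compatibility with the associators changes $h$ by the group coboundary of Equation \ref{lefmu2}, and compatibility with the braidings forces the change of $c$ recorded in Equation \ref{lefmu2a}. Thus these equivalences are exactly the abelian $3$-coboundaries, and isomorphism classes of braided categorical groups with fixed $(G,M)$ are classified by $H_{ab}^{3}(G,M)$. Invoking Theorem \ref{thm_EilenbergMacLaneIso1}, the trace $(h,c)\mapsto(x\mapsto c(x,x))$ is an isomorphism onto $\operatorname{Quad}(G,M)$; and since in the skeleton $c(x,x)$ is precisely the scalar $s_{X,X}\otimes X^{-1}\otimes X^{-1}$, the form so produced is exactly the $q$ of the statement. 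Its well-definedness on isoclasses and its quadraticity are now automatic, inherited from the trace. Essential surjectivity of $T$ is then nothing but the surjectivity half of Theorem \ref{thm_EilenbergMacLaneIso1} (for which the explicit cocycle of Theorem \ref{thm_GeneralFormulaWithAP} even supplies a formula): every quadratic triple $(G,M,q)$ is realized by the braided categorical group attached to a cocycle with trace $q$.

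It remains to promote this bijection on isoclasses to a genuine equivalence of $1$-categories, and this is where I expect the real work to lie. On morphisms one checks first that a braided monoidal functor induces group homomorphisms $f\colon G\to G'$ and $g\colon M\to M'$, and that it carries self-braidings to self-braidings, forcing the square of Diagram \ref{lviapsA2} to commute, i.e. $g\circ q=q'\circ f$; this yields a well-defined functor $T$ once one verifies that monoidally isomorphic functors induce the same pair $(f,g)$, matching the quotient by equivalence built into $\mathcal{BCG}$. Faithfulness and fullness then amount to showing that, for fixed $(f,g)$ with $g\circ q=q'\circ f$, the braided monoidal functors lifting $(f,g)$ form a single nonempty equivalence class. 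The natural route is to translate ``braided monoidal functor over $(f,g)$, up to monoidal natural isomorphism'' into the statement that the pulled-back and pushed-forward classes $f^{\ast}[(h',c')]$ and $g_{\ast}[(h,c)]$ coincide in $H_{ab}^{3}(G,M')$, and then to use naturality of the Eilenberg--Mac Lane isomorphism in both variables, under which $\operatorname{tr}$ carries these two classes to $q'\circ f$ and $g\circ q$ respectively; injectivity of $\operatorname{tr}$ reduces the required equality to the already-established identity of quadratic forms. The main obstacle is thus the careful $2$-categorical bookkeeping in this last step: tracking the monoidal and braided coherence cells precisely enough to see that existence and uniqueness-up-to-equivalence of the lift are governed by a single cohomology class whose only invariant, by Theorem \ref{thm_EilenbergMacLaneIso1}, is its trace.
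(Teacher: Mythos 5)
The paper does not actually prove this theorem: it records the object-level dictionary (pass to a skeleton; read the pentagon and hexagon axioms as Equations \ref{lefmu1a}, \ref{lx_a_1}, \ref{lx_a_2}; identify abelian $3$-coboundaries with braided monoidal structures on the identity functor; feed the resulting class into the Eilenberg--Mac Lane isomorphism) and then defers all remaining details to \cite{MR1250465}. Your first two paragraphs reproduce exactly this sketch, and your treatment of essential surjectivity and of fullness is sound: the obstruction $f^{\ast}[(h',c')]-g_{\ast}[(h,c)]\in H_{ab}^{3}(G,M')$ to lifting $(f,g)$ is sent by $\operatorname{tr}$ to $q'\circ f-g\circ q=0$, so it vanishes by injectivity of the trace, and the vanishing of this class is precisely the solvability of the coherence equations for the monoidal structure cell $k$.

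The gap is in faithfulness. Two braided monoidal functors lifting the same $(f,g)$ differ by a $2$-cochain $\kappa\colon G^{2}\to M'$ which, by the associativity and braiding coherences, is a normalized \emph{symmetric group $2$-cocycle}, and they are monoidally naturally isomorphic iff $\kappa=\delta\eta$ for some normalized function $\eta\colon G\to M'$. Hence the set of equivalence classes of lifts of $(f,g)$, once nonempty, is a torsor under symmetric $2$-cocycles modulo such coboundaries, i.e.\ under $\operatorname{Ext}_{\mathbb{Z}}^{1}(G,M')$ --- a group that is invisible to the class $[(h,c)]\in H_{ab}^{3}$ and to its trace, and that does not vanish in general. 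Concretely, for $G=\mathbb{Z}/2$ and $M'=\mathbb{Z}$ the symmetric cocycle with $\kappa(1,1)=1$ is not $-2\eta(1)$ for any $\eta$, so it equips the identity functor of the trivial skeletal model with a monoidal structure not monoidally isomorphic to the strict one, while still inducing $(\operatorname{id},\operatorname{id})$ on $(\pi_{0},\pi_{1})$. So your concluding claim that existence \emph{and} uniqueness-up-to-equivalence of the lift are ``governed by a single cohomology class whose only invariant is its trace'' is false for the uniqueness half; you need the separate input $\operatorname{Ext}_{\mathbb{Z}}^{1}(G,M')=0$ (which does hold in the fusion-category variant, Theorem \ref{thm_DGNO_PreMetricGroupClassif}, where $M'=k^{\times}$ is divisible) or a more careful bookkeeping of morphisms --- this is exactly the delicacy the paper's footnote alludes to when it says that Joyal--Street's own formulation of the classification ``sounds more convoluted.''
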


The idea is as follows: Firstly, one picks a skeleton of the category, using
that every braided monoidal category is braided monoidal equivalent to any of
its skeleta (with a suitably defined braided monoidal structure on the
skeleton). On this skeleton, the associator%
\[
a_{X,Y,Z}:(X\otimes Y)\otimes Z\overset{\sim}{\longrightarrow}X\otimes
(Y\otimes Z)
\]
and braiding%
\[
s_{X,Y}:X\otimes Y\overset{\sim}{\longrightarrow}Y\otimes X
\]
are automorphisms of objects, because in a skeleton any two mutually
isomorphic objects must be the same objec. Thus, $a_{X,Y,Z}\in M$ and
$s_{X,Y}\in M$ for any objects $X,Y,Z$. The pentagon and hexagon axioms of a
braided monoidal structure then become equivalent to the axioms of an abelian
$3$-cocycle $(h,c)$ (namely Equation \ref{lefmu1a} for the pentagon, and
Equations \ref{lx_a_1}, \ref{lx_a_2} for the hexagons).

This means that up to braided monoidal equivalence it suffices to work with
the following explicit skeletal model in the situation $G:=\pi_{0}%
(\mathsf{C},\otimes)$, $M:=\pi_{1}(\mathsf{C},\otimes)=\operatorname*{Aut}%
_{\mathsf{C}}(1_{\mathsf{C}})$ and how $h,c$ enter is explained below:

\begin{definition}
[Skeletal model]\label{def_SkeletalModel}Suppose we are given abelian groups
$G,M$ and an abelian $3$-cocycle $(h,c)\in Z_{ab}^{3}(G,M)$. Let
$\mathcal{T}:=\mathcal{T}(G,M,(h,c))$ denote the following braided categorical group:

\begin{enumerate}
\item The objects are the elements in $G$.

\item The automorphisms of an object $X\in G$ are $\operatorname*{Aut}(X):=M$,
and their composition is the addition of $M$.

\item There are no morphisms except for automorphisms (so no other composition
of morphisms needs to be defined).

\item The monoidal structure is%
\[
(X\overset{f}{\longrightarrow}X)\otimes_{\mathcal{T}}(X^{\prime}%
\overset{f^{\prime}}{\longrightarrow}X^{\prime}):=(X+X^{\prime}\overset
{f+f^{\prime}}{\longrightarrow}X+X^{\prime})\text{,}%
\]
where addition is just addition in $G$ (on objects) resp. in $M$ (for
$f,f^{\prime}$).

\item The associator%
\begin{equation}
a_{X,Y,Z}:(X\otimes Y)\otimes Z\overset{\sim}{\longrightarrow}X\otimes
(Y\otimes Z) \label{lviaps3A}%
\end{equation}
is the automorphism defined by $h(X,Y,Z)\in M$. The associativity of $G$
settles that the objects on either side are the same.

\item The braiding%
\[
s_{X,Y}:X\otimes Y\longrightarrow Y\otimes X
\]
is the automorphism given by $c(X,Y)\in M$.
\end{enumerate}
\end{definition}

One then checks that changing the abelian $3$-cocycle by an abelian
$3$-coboundary amounts to the structure rendering the identity functor
$\operatorname*{id}:\mathcal{T}\rightarrow\mathcal{T}$ a braided monoidal
self-equivalence, i.e. only the cohomology class of $[(h,c)]\in H_{ab}%
^{3}(G,M)$ is well-defined when regarding $\mathcal{T}$ as an object in
$\mathcal{BCG}$. Finally, the latter identifies uniquely with a quadratic form
$q\in\operatorname*{Quad}(G,M)$ via the\ Eilenberg--Mac Lane isomorphism,
Equation \ref{lbjca1} (or \S \ref{sect_ProofOfEilenbergMacLaneThm}). For
details regarding the proof of Theorem
\ref{thm_JoyalStreetEquivBCGAndQuadTriples} we refer to \cite[\S 3]{MR1250465}
(or the slightly different treatment in \cite{joyalstreetpreprint}).

Let $\mathcal{BCG}_{sym}\subset\mathcal{BCG}$ denote the full subcategory of
those braided categorical groups which are symmetric monoidal (the braided
monoidal equivalences then automatically become symmetric monoidal
equivalences). These are also known as \emph{Picard groupoids}.

\begin{theorem}
[S\'{\i}nh \cite{sinh}]\label{thm_Sinh}The equivalence of Equation
\ref{lviapsA1} restricts to an equivalence of $1$-categories
\[
T:\mathcal{BCG}_{sym}\longrightarrow\mathcal{Q}uad_{sym}\text{.}%
\]

\end{theorem}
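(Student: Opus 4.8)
The plan is to leverage that $T$ is already an equivalence (Theorem \ref{thm_JoyalStreetEquivBCGAndQuadTriples}) and reduce the whole statement to a single characterization of objects. Since $\mathcal{BCG}_{sym}\subseteq\mathcal{BCG}$ and $\mathcal{Q}uad_{sym}\subseteq\mathcal{Q}uad$ are \emph{full} subcategories, the restriction of the fully faithful $T$ to them is fully faithful for free, so what remains is to match objects. Concretely, I would prove the equivalence of conditions: a braided categorical group $(\mathsf{C},\otimes)$ is symmetric monoidal if and only if the polarization $b$ of the quadratic form $q$ in $T(\mathsf{C},\otimes)=(\pi_0,\pi_1,q)$ vanishes. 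Granting this, essential surjectivity onto $\mathcal{Q}uad_{sym}$ follows from that of $T$: given a symmetric triple $(G,M,q)$, pick $\mathsf{C}$ with $T(\mathsf{C})\cong(G,M,q)$; since vanishing of the polarization is clearly invariant under isomorphisms $(f,g)$ of quadratic triples (one has $b'(fx,fy)=g\,b(x,y)$ with $f,g$ isomorphisms), $T(\mathsf{C})$ also has vanishing polarization, hence $\mathsf{C}$ is symmetric and lies in $\mathcal{BCG}_{sym}$. Both subcategories are thus isomorphism-closed and the restricted functor is an equivalence.

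To prove the characterization I would compute in the skeletal model $\mathcal{T}(G,M,(h,c))$ of Definition \ref{def_SkeletalModel}. Being symmetric monoidal is invariant under braided monoidal equivalence, and every object of $\mathcal{BCG}$ is equivalent to such a model, so it suffices to work there. In $\mathcal{T}$ the braiding is $s_{X,Y}=c(X,Y)\in M$, and since $G$ is abelian the objects $X\otimes Y$ and $Y\otimes X$ literally coincide. Hence the double braiding $s_{Y,X}\circ s_{X,Y}$ is the automorphism $c(X,Y)+c(Y,X)$ of $X\otimes Y$, and $(\mathsf{C},\otimes)$ is symmetric precisely when $c(X,Y)+c(Y,X)=0$ for all $X,Y\in G$.

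It then remains to identify this last condition with the vanishing of the polarization. By the trace formula (Theorem \ref{thm_EilenbergMacLaneIso1}) the attached quadratic form is $q(x)=c(x,x)$, so $b(x,y)=c(x+y,x+y)-c(x,x)-c(y,y)$. The key identity I would establish is
\[
b(x,y)=c(x,y)+c(y,x)\text{.}
\]
To get it, expand $c(x+y,x+y)$ by feeding the two hexagon axioms into the slots: solving axiom (\ref{lx_a_2}) for $c(-+-,-)$ splits $c(x+y,x+y)$ over its first argument into $c(x,x+y)+c(y,x+y)$ plus $h$-terms, and then solving axiom (\ref{lx_a_1}) for $c(-,-+-)$ rewrites each of $c(x,x+y)$ and $c(y,x+y)$ in terms of $c(x,y),c(y,x),c(x,x),c(y,y)$ plus further $h$-values. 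The residual combination $h(x+y,x,y)+h(x,y,x+y)-h(x,x+y,y)-h(x,y,x)-h(y,x,y)$ vanishes identically by the single instance of the pentagon (\ref{lefmu1a}) obtained from the substitution $(u,x,y,z)=(x,y,x,y)$. I expect this bookkeeping — tracking which hexagon rewrites which slot and recognizing the leftover combination as exactly one pentagon relation — to be the only real obstacle; it is a finite, mechanical computation, but the substitutions must be arranged so that the $h$-terms cancel precisely. With the identity in hand, the condition $c(X,Y)+c(Y,X)=0$ for all $X,Y$ is the same as $b\equiv 0$, which closes the characterization and hence proves the theorem.
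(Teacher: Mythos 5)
Your proposal is correct, and I checked the computation that carries its main burden: expanding $c(x+y,x+y)$ via hexagon (\ref{lx_a_2}) in the first slot and hexagon (\ref{lx_a_1}) in the second slot leaves exactly the combination $h(x+y,x,y)+h(x,y,x+y)-h(x,x+y,y)-h(x,y,x)-h(y,x,y)$, and this does vanish by the pentagon (\ref{lefmu1a}) at $(u,x,y,z)=(x,y,x,y)$, using $y+x=x+y$ in $G$. So the identity $b(x,y)=c(x,y)+c(y,x)$ holds for every abelian $3$-cocycle, and the rest of your reduction (fullness of both subcategories, invariance of symmetry under braided monoidal equivalence, invariance of vanishing polarization under isomorphism of quadratic triples) is sound.

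Where you differ from the paper is in how the decisive identification is justified. Both arguments first translate symmetry of the braiding in the skeletal model into the condition $c(x,y)+c(y,x)=0$. The paper then stops computing: it invokes a commutative diagram with exact rows (going back to Whitehead, cited from \cite[Lemma 4.10]{bcg}) which identifies the subgroup $H^3_{sym}(G,M)\subseteq H^3_{ab}(G,M)$ of symmetric cocycle classes with $\operatorname{Hom}(G/2G,M)$, i.e.\ with the kernel of the polarization map $q\mapsto b$ on $\operatorname{Quad}(G,M)$. You instead prove the pointwise cocycle identity $b(x,y)=c(x,y)+c(y,x)$ by hand from one pentagon instance and the two hexagons. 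Your route is more elementary and self-contained (no external exact sequence is needed, and the identity holds for an arbitrary cocycle representative, so the equivalence with $b\equiv 0$ is immediate); the paper's route is shorter on the page and yields extra information for free, namely the explicit computation $H^3_{sym}(G,M)\cong\operatorname{Hom}(G/2G,M)$, which your argument does not produce. Note also that the identity you prove is essentially the content of \cite[Lemma 3.9]{bcg}, which the paper cites elsewhere (for well-definedness of the trace map), so your computation can be viewed as inlining that reference.
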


\begin{proof}
This was originally proven in S\'{\i}nh's thesis \cite{sinh}, and this
essentially started the entire subject from scratch. In particular, the
formulation as a special case of the Joyal--Street equivalence is
anachronistic. S\'{\i}nh defined her own functor and the right side was
different (but equivalent to $\mathcal{Q}uad_{sym}$). We explain how the above
arises as a special case within the Joyal--Street classification: If the
braiding is symmetric, we have%
\[
s_{Y,X}\circ s_{X,Y}=\operatorname*{id}\nolimits_{X\otimes Y}\text{,}%
\]
and in terms of the abelian $3$-cocycle $(h,c)$ built from the braiding and
associator this amounts to $c(y,x)+c(x,y)=0$. Among all abelian $3$-cocycles,
this additional constraint isolates the symmetric $3$-cocycles, $H_{sym}%
^{3}(G,M)\subseteq H_{ab}^{3}(G,M)$, but there is a commutative diagram, going
back to Whitehead, whose top row is exact (see \cite[Lemma 4.10]{bcg})%
\[%
%%%%%%%%%%%%%
%%%%%%%%%%
\xymatrix{
0 \ar[r] & \operatorname*{Hom}(G/2G,M) \ar@{^{(}->}[r] & \operatorname
*{Quad}(G,M) \ar[r]^-{q \mapsto b} & \operatorname*{Hom}(G\otimes G,M) \\
0 \ar[r] & H^3_{sym}(G,M) \ar[u]^{\cong} \ar@{^{(}->}[r] & H^3_{ab}
(G,M) \ar[u]^{\cong}, \\
}%
%%%%%%%%%%
\]
where \textquotedblleft$q\mapsto b$\textquotedblright\ refers to the map
sending a quadratic form to its polarization. In particular, $H_{sym}%
^{3}(G,M)$ identifies precisely with those quadratic forms whose polarization
vanishes, which was the defining property for $\mathcal{Q}uad_{sym}$ on the
right side and proves the claim. The embedding $\operatorname*{Hom}%
(G/2G,M)\hookrightarrow\operatorname*{Quad}(G,M)$ is based on the observation
that any linear map $G/2G\rightarrow M$ satisfies the axioms of a quadratic
form (see \cite[Lemma 4.10]{bcg} for details).
\end{proof}

Now return to the concept of a quadratic triple as in Definition
\ref{def_QuadTriple}. Let $k$ be a field. In the special case where $G$ is
finite, $M:=k^{\times}$ and $g$ in Diagram \ref{lviapsA2} is constrained to be
the identity map, the definition transforms into the concept of a pre-metric
group. We will mildly generalize this and drop the finiteness assumption.

\begin{definition}
For us, a \emph{big fusion category} is a semisimple rigid $k$-linear monoidal
category $(\mathsf{C},\otimes)$ with finite-dimensional $\operatorname*{Hom}%
$-spaces such that the monoidal unit $1_{\mathsf{C}}$ is simple. It is
\emph{pointed} if all simple objects are invertible.
\end{definition}

An ordinary (non-big) fusion category is a big fusion category such that there
are only finitely many isomorphism classes in $\mathsf{C}$; this is the
standard definition. The definition of a semisimple category includes that
every object decomposes as a \textit{finite} direct sum of simple objects.
Nonetheless, this would have to hold even if we only required that every
object is a direct sum of simples: If $X\simeq\bigoplus_{i\in I}S_{i}$ with
each $S_{i}$ simple (or merely non-zero), the assumption $\dim_{k}%
\operatorname*{End}(X)<\infty$ already forces the index set $I$ to be finite.

If $X$ is a simple object in a pointed big fusion category, it is invertible
and thus $X^{-1}$ is also simple; and if $X,Y$ are both simple, then $X\otimes
Y$ can only have a single simple direct summand. Thus, $X\otimes Y$ must be
simple, too.

\begin{definition}
If $(\mathsf{C},\otimes)$ is a big pointed braided fusion category, let
$\mathsf{C}_{\operatorname*{simp}}$ be the full subcategory of simple objects,
and keep only the isomorphisms as morphisms. Thus, $\mathsf{C}%
_{\operatorname*{simp}}$ is a groupoid. Moreover, $\mathsf{C}%
_{\operatorname*{simp}}$ is braided monoidal by restricting the braided
monoidal structure to the subcategory.
\end{definition}

\begin{definition}
[{\cite[Definition 2.38]{MR2609644}, \cite[\S 8.4]{MR3242743}}]Fix a field
$k$. A \emph{big pre-metric group} is a pair $(G,q)$, where $G$ is an abelian
group and $q\in\operatorname*{Quad}(G,k^{\times})$ a quadratic form. A
morphism $(G,q)\rightarrow(G^{\prime},q^{\prime})$ is a commutative diagram%
\[%
%%%%%%%%%%
\xymatrix{
G \ar[rr]^{f} \ar[dr]_{q} & & G^{\prime} \ar[dl]^{q^{\prime}} \\
& k^{\times},
}%
%%%%%%%%%%
\]
where $f$ is a group homomorphism. Write $\mathcal{PM}_{k}$ for the category
of big pre-metric groups. A \emph{symmetric big pre-metric group} is a pair
$(G,q)$ such that the polarization of the quadratic form vanishes.\ Write
$\mathcal{PM}_{k,sym}$ for the corresponding full subcategory. The original
definition without the word \textquotedblleft big\textquotedblright\ refers to
the same, except that we demand $G$ to be a finite abelian group.
\end{definition}

Write $\mathcal{PB}_{k}$ for the $1$-category whose objects are pointed
braided $k$-linear big fusion categories and whose morphisms are the
equivalence classes of $k$-linear braided monoidal functors. Write
$\mathcal{PS}_{k}\subset\mathcal{PB}_{k}$ for the full subcategory of those
big fusion categories which are symmetric monoidal.

\begin{theorem}
[{\cite[Proposition 2.41]{MR2609644}}]\label{thm_DGNO_PreMetricGroupClassif}%
Let $k$ be an algebraically closed field. There is an equivalence of
$1$-categories%
\begin{align*}
E:\mathcal{PB}_{k}  &  \longrightarrow\mathcal{PM}_{k}\\
(\mathsf{C},\otimes)  &  \longmapsto(\pi_{0}(\mathsf{C},\otimes),q)\text{,}%
\end{align*}
where $q$ is defined as follows: For any simple object $X\in\mathsf{C}$ the
self-braiding $s_{X,X}:X\otimes X\overset{\sim}{\rightarrow}X\otimes X$
induces an automorphism of the tensor unit, namely%
\[
s_{X,X}\otimes X^{-1}\otimes X^{-1}:1_{\mathsf{C}}\overset{\sim}%
{\longrightarrow}1_{\mathsf{C}}\text{,}%
\]
and $q([X]):=(s_{X,X}\otimes X^{-1}\otimes X^{-1})\in\operatorname*{Aut}%
(1_{\mathsf{C}})\cong k^{\times}$ extends to a well-defined quadratic form on
$\pi_{0}(\mathsf{C},\otimes)$.
\end{theorem}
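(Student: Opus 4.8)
The plan is to deduce this from the Joyal--Street equivalence (Theorem \ref{thm_JoyalStreetEquivBCGAndQuadTriples}) by passing to the subcategory of simple objects. First I would observe that for a pointed braided big fusion category $(\mathsf{C},\otimes)$, the full subcategory $\mathsf{C}_{\operatorname*{simp}}$ of simple objects (keeping only isomorphisms) is a braided categorical group: we already noted above that simple objects are invertible and that the tensor product of two simples is again simple, and in a semisimple category every morphism between simples is either zero or an isomorphism. Thus $\mathsf{C}\mapsto\mathsf{C}_{\operatorname*{simp}}$ defines a functor $\mathcal{PB}_{k}\to\mathcal{BCG}$. The crucial point is to identify $\pi_{1}(\mathsf{C}_{\operatorname*{simp}},\otimes)=\operatorname{Aut}(1_{\mathsf{C}})$ with $k^{\times}$: since $1_{\mathsf{C}}$ is simple and $\operatorname{Hom}$-spaces are finite-dimensional, Schur's lemma shows $\operatorname{End}(1_{\mathsf{C}})$ is a finite-dimensional division algebra over $k$, which for $k$ algebraically closed forces $\operatorname{End}(1_{\mathsf{C}})=k$ and hence $\operatorname{Aut}(1_{\mathsf{C}})=k^{\times}$.

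Next I would pin down the image of this functor inside $\mathcal{BCG}$. A $k$-linear monoidal functor acts as a unital $k$-algebra endomorphism of $\operatorname{End}(1_{\mathsf{C}})=k$, hence as the identity on $\pi_{1}=k^{\times}$. Therefore the simple-objects functor lands in the full subcategory $\mathcal{BCG}_{k^{\times}}\subseteq\mathcal{BCG}$ of braided categorical groups whose $\pi_{1}$ is identified with $k^{\times}$ and whose morphisms induce the identity map on $\pi_{1}$. I claim $\mathsf{C}\mapsto\mathsf{C}_{\operatorname*{simp}}$ is an equivalence onto $\mathcal{BCG}_{k^{\times}}$. A quasi-inverse is $k$-linearization: given such a braided categorical group $\mathsf{D}$, form the semisimple $k$-linear category whose simple objects are the objects of $\mathsf{D}$, with $\operatorname{Hom}$-spaces obtained by adjoining $0$ to the automorphism groups $\operatorname{Aut}(X)=k^{\times}$, transport the braided monoidal structure from $\mathsf{D}$ (concretely via the abelian $3$-cocycle underlying $\mathsf{D}$ as in the skeletal model of Definition \ref{def_SkeletalModel}), and extend by $k$-linearity and finite direct sums.

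Finally, I would compose with Joyal--Street. Under $T$, the subcategory $\mathcal{BCG}_{k^{\times}}$ corresponds precisely to the quadratic triples $(G,M,q)$ with $M=k^{\times}$ and morphisms whose second component $g$ is forced to be $\operatorname{id}_{k^{\times}}$; this is exactly the definition of $\mathcal{PM}_{k}$. Tracing the construction of $q$ through both equivalences shows it agrees with the self-braiding expression $q([X])=s_{X,X}\otimes X^{-1}\otimes X^{-1}$ in the statement, since this is literally the quadratic form that Joyal--Street attaches to $\mathsf{C}_{\operatorname*{simp}}$.

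The main obstacle is the middle step, namely verifying that $k$-linearization is a genuine quasi-inverse. Essential surjectivity requires checking that the linearized category is rigid, semisimple with finite-dimensional $\operatorname{Hom}$-spaces, has a simple unit, and carries a well-defined braided monoidal structure whose coherence data satisfy the pentagon and hexagon axioms (which hold exactly because the input is an abelian $3$-cocycle). Full faithfulness requires that every $k$-linear braided monoidal functor is determined up to equivalence by its restriction to simples, and conversely that every braided monoidal functor between the categorical groups extends $k$-linearly; this is where semisimplicity and the finite-dimensionality of $\operatorname{Hom}$-spaces do the real work.
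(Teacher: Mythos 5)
Your proposal follows essentially the same route as the paper's own (quite terse) proof: reduce to the Joyal--Street equivalence via $\mathsf{C}\mapsto\mathsf{C}_{\operatorname*{simp}}$, identify $\operatorname{Aut}(1_{\mathsf{C}})=k^{\times}$ by Schur's lemma using that $k$ is algebraically closed, and observe that $k$-linearity forces the induced map on $\pi_{1}$ to be the identity, which cuts out exactly $\mathcal{PM}_{k}$ inside $\mathcal{Q}uad$. You actually supply more detail than the paper does (the $k$-linearization quasi-inverse and the honest list of verifications it requires), but the strategy is the same.
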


\begin{proof}
This result essentially reduces to the classification of Joyal--Street of
braided categorical groups. The main differences are as follows: (a) Since
$1_{\mathsf{C}}$ is simple by assumption, $\operatorname*{End}(1_{\mathsf{C}%
})$ must be a division algebra over $k$ by Schur's Lemma. As $k$ is
algebraically closed, we must have $\operatorname*{End}(1_{\mathsf{C}})=k$ and
thus $\operatorname*{Aut}(1_{\mathsf{C}})=k^{\times}$. The category
$\mathsf{C}_{\operatorname*{simp}}$ thus is a braided categorical group with
$\pi_{1}(\mathsf{C}_{\operatorname*{simp}})=k^{\times}$. (b) Since the braided
monoidal functors between big fusion categories are assumed $k$-linear, they
correspond to morphisms of quadratic triples as in Figure \ref{lviapsA2}, but
must be the identity on $k^{\times}$.
\end{proof}

See also \cite[Theorem 8.4.12]{MR3242743} for a direct proof which differs in
certain parts. In analogy to Theorem \ref{thm_Sinh} one obtains the following.

\begin{theorem}
[{\cite[Example 2.45]{MR2609644}}]The equivalence of Equation \ref{lviapsA1}
restricts to an equivalence of $1$-categories
\[
T:\mathcal{PS}_{k}\longrightarrow\mathcal{PM}_{k,sym}\text{.}%
\]

\end{theorem}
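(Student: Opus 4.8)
The plan is to imitate the proof of Theorem \ref{thm_Sinh} almost verbatim, now with the Joyal--Street functor replaced by the equivalence $E$ of Theorem \ref{thm_DGNO_PreMetricGroupClassif} and specialized to $M:=k^{\times}$. Since $E$ is already known to be an equivalence of $1$-categories, and since $\mathcal{PS}_{k}\subseteq\mathcal{PB}_{k}$ and $\mathcal{PM}_{k,sym}\subseteq\mathcal{PM}_{k}$ are both defined as \emph{full} subcategories, the entire statement reduces to a single object-level criterion: a pointed braided big fusion category is symmetric monoidal if and only if the polarization $b$ of its associated quadratic form $q$ vanishes. Granting this, the restriction of $E$ is fully faithful (inherited from $E$) and essentially surjective onto $\mathcal{PM}_{k,sym}$ (symmetry being an isomorphism-invariant property, and $E$ being essentially surjective), hence an equivalence.

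To establish the criterion I would pass to the skeletal model of Definition \ref{def_SkeletalModel}. There the braiding $s_{X,Y}$ is the automorphism given by $c(X,Y)\in k^{\times}$, and symmetry of the monoidal structure means $s_{Y,X}\circ s_{X,Y}=\operatorname{id}_{X\otimes Y}$ for all objects. Reading this off in the skeleton, where composition of automorphisms is addition in $M$, it becomes the condition $c(x,y)+c(y,x)=0$ for all $x,y\in G$. This condition is invariant under abelian $3$-coboundaries: by Equation \ref{lefmu2a} a coboundary contributes $c(x,y)=k(x,y)-k(y,x)$, whose symmetrization $c(x,y)+c(y,x)$ is identically zero. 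Hence $c(x,y)+c(y,x)$ depends only on the cohomology class, equivalently only on $q$. Evaluating on the explicit representative of Theorem \ref{thm_GeneralFormulaWithAP}, where $c(x,y)=C(\widetilde{x},\widetilde{y})$, the defining relation \ref{ljbc5s} of an admissible presentation gives
\[
c(x,y)+c(y,x)=C(\widetilde{x},\widetilde{y})+C(\widetilde{y},\widetilde{x})=b(\pi\widetilde{x},\pi\widetilde{y})=b(x,y)\text{,}
\]
where the last equality uses $\pi\widetilde{x}=x$. Therefore the braiding is symmetric exactly when $b\equiv0$, which is precisely the defining condition for $(G,q)$ to lie in $\mathcal{PM}_{k,sym}$. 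Equivalently, one may quote the Whitehead diagram displayed in the proof of Theorem \ref{thm_Sinh}, whose exact top row identifies $H^{3}_{sym}(G,M)$ with the vanishing-polarization quadratic forms $\operatorname{Hom}(G/2G,M)\subseteq\operatorname{Quad}(G,M)$.

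Since $E$ is already an equivalence, there is no genuine obstacle here; the only substantive step is the object-level criterion, and even that amounts to coboundary-invariance together with the single bilinear-form identity above. The one point requiring care is the bookkeeping distinguishing $\mathcal{PB}_{k}$ from the ambient $\mathcal{BCG}$: here $M$ is fixed to be $k^{\times}$, which is divisible because $k$ is algebraically closed, so the machinery of \S\ref{sect_AdmissiblePresentation} (in particular the existence of the explicit representative used above) applies without friction; and all functors are required to be $k$-linear, hence act as the identity on $\pi_{1}=k^{\times}$. Imposing symmetry alters neither the automorphism group $M$ of the objects nor this morphism constraint, so the symmetric subcategories inherit these features unchanged and nothing beyond the criterion needs to be checked.
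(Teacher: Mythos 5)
Your proposal is correct and follows exactly the route the paper intends: the paper gives no proof of this statement beyond the citation to \cite[Example 2.45]{MR2609644} and the remark that it is obtained ``in analogy to Theorem \ref{thm_Sinh}'', and your argument is precisely that analogy carried out --- reduce to the object-level criterion that symmetry of the braiding is equivalent to vanishing of the polarization, using that both subcategories are full and that $E$ is already an equivalence. Your derivation of the criterion via coboundary-invariance of $c(x,y)+c(y,x)$ and the identity $C(\widetilde{x},\widetilde{y})+C(\widetilde{y},\widetilde{x})=b(x,y)$ from Equation \ref{ljbc5s} is a sound (and slightly more self-contained) substitute for the Whitehead diagram quoted in the proof of Theorem \ref{thm_Sinh}, which you also correctly identify as an alternative.
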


\subsection{Structure results}

Now, let us draw some conclusions from these equivalences. The following is
certainly known in the setting of fusion categories.

\begin{theorem}
\label{thm_characterize_simultaneouslySkeletalAndStrictlyAssociative}Let
$(\mathsf{C},\otimes)$ be

\begin{description}
\item[(a)] a $k$-linear pointed braided fusion category with $k$ algebraically
closed, or

\item[(b)] a braided categorical group.
\end{description}

Write $\pi_{i}:=\pi_{i}(\mathsf{C}_{\operatorname*{simp}},\otimes)$ in
situation (a), and $\pi_{i}:=\pi_{i}(\mathsf{C},\otimes)$ in situation (b).
Then the following statements are equivalent:

\begin{enumerate}
\item $(\mathsf{C},\otimes)$ is braided monoidal equivalent to a
simultaneously skeletal and strictly associative braided monoidal category.

\item The abelian $3$-cocycle of $(\mathsf{C},\otimes)$ admits a cocycle
representative $(h,c)$ such that%
\[
c:\pi_{0}\times\pi_{0}\longrightarrow\pi_{1}%
\]
is bilinear and $h$ vanishes.

\item The abelian $3$-cocycle of $(\mathsf{C},\otimes)$ admits a cocycle
representative $(h,c)$ such that%
\[
h:\pi_{0}\times\pi_{0}\times\pi_{0}\longrightarrow\pi_{1}\qquad\text{and}%
\qquad c:\pi_{0}\times\pi_{0}\longrightarrow\pi_{1}%
\]
are trilinear resp. bilinear in $\pi_{0}$.

\item There exists a bilinear form $S$ on $\pi_{0}$ such that%
\[
q(x)=S(x,x)
\]
holds for the quadratic form attached to $(h,c)$ under the Eilenberg--Mac Lane
isomorphism of Equation \ref{ljbc6a}.
\end{enumerate}
\end{theorem}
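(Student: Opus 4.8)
The plan is to prove the cyclic chain $(2)\Rightarrow(3)\Rightarrow(4)\Rightarrow(2)$ and then to splice in $(1)$ by proving $(2)\Rightarrow(1)$ and $(1)\Rightarrow(4)$; since $(4)\Rightarrow(2)$ already belongs to the cycle, this collapses all four statements into a single equivalence class. Throughout I would rely on the dictionary of \S\ref{sect_AssociatorsAndBraidings}: by the skeletal model of Definition \ref{def_SkeletalModel} together with the classifications of Theorem \ref{thm_JoyalStreetEquivBCGAndQuadTriples} (case (b)) and Theorem \ref{thm_DGNO_PreMetricGroupClassif} (case (a), where $k$ algebraically closed secures $\operatorname{Aut}(1_{\mathsf{C}})=k^{\times}$), a skeletal braided monoidal model of $(\mathsf{C},\otimes)$ is exactly a $\mathcal{T}(\pi_0,\pi_1,(h,c))$ attached to a cocycle representative of the class of $(\mathsf{C},\otimes)$, with associator read off from $h$ and braiding from $c$. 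The one observation I would record at the outset is that, in the hexagon identities \ref{lx_a_1}--\ref{lx_a_2}, setting $h=0$ collapses them to additivity of $c$ in each slot; thus a representative with vanishing $h$ automatically has $c$ bilinear, and conversely $(0,c)$ with $c$ bilinear is a genuine abelian $3$-cocycle.

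The two easy implications come first. For $(2)\Rightarrow(3)$ there is nothing to do: the zero map is trilinear, so a representative with $h=0$ and $c$ bilinear already has the shape demanded by $(3)$. For $(3)\Rightarrow(4)$ I would use that the trace of Theorem \ref{thm_EilenbergMacLaneIso1} is well defined on cohomology and sends any representative $(h,c)$ to $x\mapsto c(x,x)$; hence the attached quadratic form equals $c(x,x)$, and since $c$ is bilinear the choice $S:=c$ furnishes a bilinear form with $q(x)=S(x,x)$.

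The crux is $(4)\Rightarrow(2)$, where I would feed the machinery of the earlier sections. Given a bilinear $S$ on $\pi_0=:G$ with $q(x)=S(x,x)$, apply Lemma/Construction \ref{lemma_AP_QuadFormFromBilinearForm}(2) with $F_0=G$, $\pi=\operatorname{id}_G$ and $C=S$: since $q(\pi x)=S(x,x)=C(x,x)$, this is an optimal admissible presentation with $F_1=0$. Choosing the identity lift $\widetilde{x}=x$ (permissible as $\pi=\operatorname{id}_G$ and $\widetilde{0}=0$) makes $L(x,y)=(x+y)-x-y=0$, so Theorem \ref{thm_GeneralFormulaWithAP} produces the pair $h(x,y,z)=-C(\widetilde{x},L(y,z))=0$ and $c(x,y)=C(\widetilde{x},\widetilde{y})=S(x,y)$, an abelian $3$-cocycle whose attached quadratic form is $q$. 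Because the trace is an isomorphism (Theorem \ref{thm_EilenbergMacLaneIso1}), this $(0,S)$ is cohomologous to the cocycle of $(\mathsf{C},\otimes)$, i.e. the class admits the representative $(0,S)$ with $h$ vanishing and $c=S$ bilinear, which is precisely $(2)$.

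Finally I would close the loop through $(1)$. For $(2)\Rightarrow(1)$, take the representative $(0,c)$ with $c$ bilinear and form $\mathcal{T}(\pi_0,\pi_1,(0,c))$ of Definition \ref{def_SkeletalModel}: it is skeletal by construction and its associator is $h\equiv0=\operatorname{id}$, hence strictly associative, and $(\mathsf{C},\otimes)$ is braided monoidal equivalent to it. For $(1)\Rightarrow(4)$, a skeletal strictly associative model $\mathcal{D}$ is of the form $\mathcal{T}(G',M',(0,c'))$ with $c'$ bilinear (bilinearity forced as above), so its quadratic form is $c'(x,x)$, visibly of the shape $S'(x,x)$; transporting $S'$ along the group isomorphisms $\pi_0\cong G'$, $\pi_1\cong M'$ induced by the equivalence, and using that the attached quadratic form is a braided monoidal invariant (Theorems \ref{thm_JoyalStreetEquivBCGAndQuadTriples} and \ref{thm_DGNO_PreMetricGroupClassif}), yields a bilinear $S$ on $\pi_0$ with $q(x)=S(x,x)$. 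The step I expect to demand the most care is this last one: an arbitrary braided monoidal equivalence need not induce the identity on $\pi_0$ and $\pi_1$, so $(1)\Rightarrow(4)$ must be phrased through the invariance of the \emph{isomorphism class} of the quadratic triple rather than through a literal equality of cocycles, and one must check that transport of the bilinear form along these isomorphisms preserves the relation $q(x)=S(x,x)$.
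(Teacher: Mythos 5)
Your proposal is correct and uses essentially the same ingredients as the paper's proof: the skeletal model dictionary, the observation that $h=0$ forces $c$ bilinear via the hexagon identities, and the combination of Lemma \ref{lemma_AP_QuadFormFromBilinearForm}(2) with Theorem \ref{thm_GeneralFormulaWithAP} and the identity lift for the crux implication out of (4). The only difference is cosmetic: the paper runs the single cycle $(1)\Rightarrow(2)\Rightarrow(3)\Rightarrow(4)\Rightarrow(1)$, whereas you prove the cycle $(2)\Rightarrow(3)\Rightarrow(4)\Rightarrow(2)$ and splice in $(1)$ separately, with your extra care about transporting the quadratic triple along a non-identity equivalence being a reasonable (and harmless) refinement of the paper's slightly terser $(1)\Rightarrow(2)$ step.
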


The equivalence (1)$\Leftrightarrow$(4) in the case of fusion categories is
found in \cite[Exercise 8.4.11]{MR3242743}.

\begin{proof}
We discuss the braided categorical group case. Suppose $(G,M,q)$ is the
quadratic triple attached to $(\mathsf{C},\otimes)$ by Theorem
\ref{thm_JoyalStreetEquivBCGAndQuadTriples} $(1\Rightarrow2)$ Suppose
$(\mathsf{C},\otimes)$ is braided monoidal equivalent to a skeletal and
strictly associative braided monoidal category. Being skeletal, it is of the
form of a skeletal model $\mathcal{T}(G,M,(h,c))$, Definition
\ref{def_SkeletalModel}, where $(h,c)$ is an abelian $3$-cocycle. Since the
model is strictly associative, the maps%
\[
a_{X,Y,Z}:(X\otimes Y)\otimes Z\overset{\sim}{\longrightarrow}X\otimes
(Y\otimes Z)
\]
of Equation \ref{lviaps3A} are identity maps in this model, i.e. $h(x,y,z)=0$
holds for all $x,y,z$. Now Equations \ref{lx_a_1}, \ref{lx_a_2} imply that $c$
is $\mathbb{Z}$-bilinear. $(2\Rightarrow3)$ Trivial. $(3\Rightarrow4)$ Define
$S(x,y):=c(x,y)$. This is bilinear by assumption. Moreover,%
\[
S(x,x)=c(x,x)=q(x)
\]
since the Eilenberg--Mac Lane isomorphism maps $(h,c)$ to $x\mapsto c(x,x)$.
$(4\Rightarrow1)$ We can apply Lemma \ref{lemma_AP_QuadFormFromBilinearForm}
(2), so $(G,\operatorname*{id}_{G},S)$ is an optimal admissible presentation
with $F_{1}=0$. According to Theorem \ref{thm_GeneralFormulaWithAP} it follows
that $(h,c)$ with%
\[
h(x,y,z):=-S(\widetilde{x},L(y,z))\text{,}\qquad c(x,y):=S(\widetilde
{x},\widetilde{y})
\]
is an abelian $3$-cocycle representative for the quadratic form $q$ for any
admissible lifting $\widetilde{(-)}$. Since $F_{0}=G$, we can just take the
identity as a lifting. In particular, $L(x,y)=\widetilde{(x+y)}-\widetilde
{x}-\widetilde{y}=0$, so $h(x,y,z)=0$ for all $x,y,z\in G$. It follows that
the skeletal model $\mathcal{T}(G,M,(0,c))$ is braided monoidal equivalent to
$(\mathsf{C},\otimes)$, and this is skeletal, and thanks to $h=0$ the
associators are trivial. In the big fusion category case, instead work with
the triple $(G,k^{\times},q)$ of Theorem \ref{thm_DGNO_PreMetricGroupClassif},
and replace the skeletal model $\mathcal{T}(G,M,(h,c))$ by its natural
$k$-linear analogue.
\end{proof}

In the symmetric monoidal situation the equivalent properties of Theorem
\ref{thm_characterize_simultaneouslySkeletalAndStrictlyAssociative} are always satisfied.

\begin{theorem}
[Johnson--Osorno \cite{MR2981952}]Let $(\mathsf{C},\otimes)$ be

\begin{itemize}
\item a $k$-linear pointed symmetric fusion category with $k$ algebraically
closed, or

\item a Picard groupoid.
\end{itemize}

Then $(\mathsf{C},\otimes)$ is symmetric monoidal equivalent to a
simultaneously skeletal and strictly associative symmetric monoidal category.
More precisely, all characterizations in Theorem
\ref{thm_characterize_simultaneouslySkeletalAndStrictlyAssociative} are always
met in this setting.
\end{theorem}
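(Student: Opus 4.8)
The plan is to verify condition (4) of Theorem \ref{thm_characterize_simultaneouslySkeletalAndStrictlyAssociative}, since by that theorem $(4)\Rightarrow(1)$ gives exactly the asserted normal form (a simultaneously skeletal and strictly associative model). First I would invoke the classification results: by Theorem \ref{thm_Sinh} in case (b), resp. Theorem \ref{thm_DGNO_PreMetricGroupClassif} together with its symmetric refinement (\cite[Example 2.45]{MR2609644}) in case (a), the hypothesis that $(\mathsf{C},\otimes)$ is symmetric monoidal translates into the statement that the attached quadratic triple is a \emph{symmetric} triple. Concretely, as in the proof of Theorem \ref{thm_Sinh}, symmetry of the braiding amounts to $c(y,x)+c(x,y)=0$, which means the polarization $b$ of the attached form $q\in\operatorname*{Quad}(G,M)$ (with $G:=\pi_{0}$, $M:=\pi_{1}$) vanishes identically. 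So the whole problem reduces to the purely algebraic claim: whenever $b=0$, there is a $\mathbb{Z}$-bilinear form $S$ on $G$ with $q(x)=S(x,x)$.

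Next I would extract the structure of such a $q$. Vanishing polarization means $q(x+y)=q(x)+q(y)$, so $q$ is additive; combined with the defining property $q(x)=q(-x)$ of a quadratic form this forces $2q(x)=0$ for all $x$. Hence $q$ is a group homomorphism $G\to\left._{2}M\right.$ into the $2$-torsion subgroup, and it descends to an $\mathbb{F}_{2}$-linear map $G/2G\to\left._{2}M\right.$.

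The construction of $S$ then reuses the device already deployed in Step 1 of Proposition \ref{Prop_OptimizeAP}: choose an $\mathbb{F}_{2}$-basis $(\gamma_{i})_{i\in I}$ of $G/2G$ and set $S(x,y):=\sum_{i\in I}\overline{x}_{i}\overline{y}_{i}\,q(\gamma_{i})$, where $\overline{x}_{i},\overline{y}_{i}\in\mathbb{F}_{2}$ are the coordinates. Because the values $q(\gamma_{i})$ lie in $\left._{2}M\right.$, the $\mathbb{F}_{2}$-scaling is well-defined and $S$ is $\mathbb{F}_{2}$-bilinear; using $\alpha^{2}\equiv\alpha\,(\operatorname{mod}2)$ one gets $S(x,x)=\sum_{i}\overline{x}_{i}q(\gamma_{i})=q(x)$. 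Lifting $S$ along the surjection $G\twoheadrightarrow G/2G$ to a $\mathbb{Z}$-bilinear form $S:G\otimes_{\mathbb{Z}}G\to M$ keeps $S(x,x)=q(x)$ intact. This verifies condition (4), so by Theorem \ref{thm_characterize_simultaneouslySkeletalAndStrictlyAssociative} condition (1) holds; unwinding its $(4\Rightarrow1)$ argument (which feeds $(G,\operatorname{id}_{G},S)$ through Lemma \ref{lemma_AP_QuadFormFromBilinearForm}(2) and Theorem \ref{thm_GeneralFormulaWithAP} with the identity lift, so that $L=0$ and hence $h=0$) produces the desired skeletal, strictly associative model. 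Finally one notes that the braided equivalence obtained this way is automatically symmetric monoidal, since the source is and the equivalence respects the braiding.

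I expect the only genuine subtlety to be the passage to the $2$-torsion picture and the choice of an $\mathbb{F}_{2}$-basis of $G/2G$ in the infinite case (which invokes a choice principle), together with checking that lifting $S$ from $G/2G$ back to $G$ does not disturb the identity $S(x,x)=q(x)$. Everything else is a direct appeal to the machinery already established, so the argument is short once the reduction $b=0$ is in place.
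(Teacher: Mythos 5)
Your proposal is correct and follows essentially the same route as the paper: reduce via Theorem \ref{thm_Sinh} to a quadratic form with vanishing polarization, observe that $q$ is then a $2$-torsion-valued homomorphism, build the bilinear form $S(x,y)=\sum_i \overline{x}_i\overline{y}_i\,q(\gamma_i)$ from an $\mathbb{F}_2$-basis of $G/2G$, and feed condition (4) into the implication $(4)\Rightarrow(1)$ of Theorem \ref{thm_characterize_simultaneouslySkeletalAndStrictlyAssociative}. The only differences are presentational (you spell out the lift from $G/2G$ to $G$ and the automatic symmetry of the resulting equivalence slightly more explicitly than the paper does).
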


This was originally observed in the context of Picard groupoids \cite[Theorem
2.2]{MR2981952} with a different method.

\begin{proof}
We apply the equivalence of categories in the symmetric setting, Theorem
\ref{thm_Sinh}, so $(\mathsf{C},\otimes)$ corresponds to a symmetric triple
$(G,M,q)$ in $\mathcal{Q}uad_{sym}$, or $(G,k^{\times},q)$ in the fusion
category setting. Being in $\mathcal{Q}uad_{sym}$, we know that the
polarization of $q$ is trivial, i.e.%
\[
0=b(x,y)=q(x+y)-q(x)-q(y)\text{,}%
\]
showing that $q$ is a linear map (the complete deduction of this is as we had
done it around Equation \ref{lwdaa1}). As the above correspondence of Theorem
\ref{thm_Sinh} is just a special case of the Joyal--Street equivalence of
Theorem \ref{thm_JoyalStreetEquivBCGAndQuadTriples}, we can also use Theorem
\ref{thm_characterize_simultaneouslySkeletalAndStrictlyAssociative} in this
setting. The conclusion (4)$\Rightarrow$(1) shows that our claim is proven if
we can show that $q$ comes from a bilinear form. Define%
\begin{equation}
S(x,y)=\sum_{i}x_{i}y_{i}q(\gamma_{i}) \label{lcuac1}%
\end{equation}
with respect to the coordinates $x_{i},y_{i}\in\mathbb{F}_{2}$ in any chosen
basis $(\gamma_{i})_{i\in I}$ of $G/2G$ as an $\mathbb{F}_{2}$-vector space.
Since $q(-\gamma_{i})=q(\gamma_{i})$ holds for any quadratic form, the
linearity of $q$ yields $2q(\gamma_{i})=0$, so the scalar multiplication with
elements from $\mathbb{F}_{2}$ in Equation \ref{lcuac1} is well-defined and
$S$ is indeed bilinear. Then $S(x,x)=\sum x_{i}^{2}q(\gamma_{i})\equiv\sum
x_{i}q(\gamma_{i})=q(x)$, proving the claim. The remaining characterizations
follow since by Theorem
\ref{thm_characterize_simultaneouslySkeletalAndStrictlyAssociative} they are
all equivalent.
\end{proof}

As a further refinement of the characterization in Theorem
\ref{thm_characterize_simultaneouslySkeletalAndStrictlyAssociative} relating
the possibility to trivialize associators to the linearity of the braiding, we
can show the following.

\begin{theorem}
\label{thm_StdForm}Suppose $(\mathsf{C},\otimes)$ is a braided categorical
group such that $\pi_{1}(\mathsf{C},\otimes)$ is a divisible group.\ Then
$(\mathsf{C},\otimes)$ is braided monoidal equivalent to a skeletal model as
in Definition \ref{def_SkeletalModel} such that%
\[
h(x,y,z)=c(x,y)+c(x,z)-c(x,y+z)
\]
holds for the abelian $3$-cocycle. That is: For any objects $X,Y,Z$ we have%
\begin{equation}
a_{X,Y,Z}=s_{X,Y}+s_{X,Z}-s_{X,Y\otimes Z}\qquad\text{and}\qquad
a_{Z,X,Y}=s_{X\otimes Y,Z}-s_{X,Z}-s_{Y,Z}\text{.} \label{lcce4}%
\end{equation}
We call this a \emph{normal form skeletal model}.
\end{theorem}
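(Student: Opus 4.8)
The plan is to obtain the two identities of \eqref{lcce4} as automatic features of the general cocycle formula of Theorem \ref{thm_GeneralFormulaWithAP}, with divisibility entering only to guarantee the kind of presentation that formula requires. First I would pass from $(\mathsf{C},\otimes)$ to its associated quadratic triple $(G,M,q)$ via the Joyal--Street equivalence (Theorem \ref{thm_JoyalStreetEquivBCGAndQuadTriples}), so that $G=\pi_{0}$, $M=\pi_{1}$, and the task reduces to choosing a good cocycle representative for $q$. Since $M=\pi_{1}(\mathsf{C},\otimes)$ is divisible, Theorem \ref{thm_AbstractExistence}(2) supplies an \emph{optimal admissible} presentation $(F_{0},\pi,C)$ of $q$; fix any admissible lift $\widetilde{(-)}$ as in Definition \ref{def_AL}. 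Feeding this into Theorem \ref{thm_GeneralFormulaWithAP} produces an abelian $3$-cocycle $(h,c)$, with $h(x,y,z)=-C(\widetilde{x},L(y,z))$ and $c(x,y)=C(\widetilde{x},\widetilde{y})$, whose trace is $q$; the skeletal model $\mathcal{T}(G,M,(h,c))$ of Definition \ref{def_SkeletalModel} is then braided monoidal equivalent to $(\mathsf{C},\otimes)$.

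It remains to verify that this particular $(h,c)$ satisfies the two relations in Equation \ref{lcce4}, which under the dictionary $a_{X,Y,Z}=h(x,y,z)$, $s_{X,Y}=c(x,y)$ of Definition \ref{def_SkeletalModel} read $h(x,y,z)=c(x,y)+c(x,z)-c(x,y+z)$ and $h(z,x,y)=c(x+y,z)-c(x,z)-c(y,z)$. The first is pure bilinearity of $C$ in its second slot: $c(x,y)+c(x,z)-c(x,y+z)=C(\widetilde{x},\,\widetilde{y}+\widetilde{z}-\widetilde{(y+z)})=-C(\widetilde{x},L(y,z))=h(x,y,z)$. For the second I would expand $c(x+y,z)-c(x,z)-c(y,z)=C(\widetilde{(x+y)}-\widetilde{x}-\widetilde{y},\widetilde{z})=C(L(x,y),\widetilde{z})$ using bilinearity of $C$ in the first slot, and then observe that $L(x,y)\in F_{1}=\ker(\pi)$, so axiom (2) of Definition \ref{def_AP} gives $C(L(x,y),\widetilde{z})+C(\widetilde{z},L(x,y))=b(\pi L(x,y),\pi\widetilde{z})=b(0,z)=0$; hence $C(L(x,y),\widetilde{z})=-C(\widetilde{z},L(x,y))=h(z,x,y)$. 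Translating these two equalities back through the skeletal model yields exactly the pair of formulas in Equation \ref{lcce4}.

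I do not expect a genuine obstacle here: the real content is the observation that both ``extra symmetries'' are built into the formula of Theorem \ref{thm_GeneralFormulaWithAP} — the first coming from bilinearity alone, the second from the isotropy of $F_{1}$ recorded in axiom (2). The only delicate point is to pin down where divisibility is actually used, and it is used exactly once: to upgrade a merely pre-admissible presentation (which always exists by Theorem \ref{thm_AbstractExistence}(1)) to an \emph{admissible} one. Admissibility is what makes $(h,c)$ a bona fide abelian $3$-cocycle in the first place, via property (4) of Lemma \ref{lemma_LSymmetries} as used in the proof of Lemma \ref{lemma_E2}, while optimality keeps its trace equal to $q$ so that the resulting skeletal model is still equivalent to $(\mathsf{C},\otimes)$. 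Thus the whole argument sidesteps Quinn's formula entirely and depends on the input group $G$ only through this single existence statement.
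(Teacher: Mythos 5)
Your proposal is correct and follows essentially the same route as the paper: invoke Theorem \ref{thm_AbstractExistence}(2) via divisibility to get an optimal admissible presentation, feed it into Theorem \ref{thm_GeneralFormulaWithAP}, and derive the first identity from bilinearity of $C$ in the second slot and the second from $L(x,y)\in F_{1}=\ker(\pi)$ together with axiom (2) of Definition \ref{def_AP}. Your closing remarks on where divisibility, admissibility, and optimality each enter are accurate and consistent with the paper's argument.
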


We do not know whether this is also true for general $\pi_{1}(\mathsf{C}%
,\otimes)$.

\begin{proof}
By Theorem \ref{thm_AbstractExistence} since $M:=\pi_{1}(\mathsf{C},\otimes)$
is divisible, an optimal admissible presentation $(F_{0},\pi,C)$ exists. Let
$\widetilde{(-)}$ be any admissible lift for this presentation. Then by
Theorem \ref{thm_GeneralFormulaWithAP} we find an abelian $3$-cocycle
representative $(h,c)$ such that%
\begin{align*}
h(x,y,z)  &  =-C(\widetilde{x},L(y,z))=C(\widetilde{x},\widetilde
{y})+C(\widetilde{x},\widetilde{z})-C(\widetilde{x},\widetilde{(y+z)})\\
&  =c(x,y)+c(x,z)-c(x,y+z)\text{,}%
\end{align*}
which is precisely our claim. Combine it with Theorem
\ref{thm_JoyalStreetEquivBCGAndQuadTriples} and Definition
\ref{def_SkeletalModel} to relate it to the concrete associator and braiding.
This yields $a_{X,Y,Z}=s_{X,Y}+s_{X,Z}-s_{X,Y\otimes Z}$. For the second
identity,%
\begin{align*}
c(x+y,z)-c(x,z)-c(y,z)  &  =C(\widetilde{(x+y)},\widetilde{z})-C(\widetilde
{x},\widetilde{z})-C(\widetilde{y},\widetilde{z})=C(L(x,y),\widetilde{z})\\
&  =b(\pi L(x,y),\pi\widetilde{z})-C(\widetilde{z},L(x,y))=h(z,x,y)
\end{align*}
since $\pi L(y,z)=0$ holds by construction.
\end{proof}

Analogously, we obtain:

\begin{theorem}
\label{thm_AnnFull}Suppose $k$ is an algebraically closed field. Let
$(\mathsf{C},\otimes)$ be a $k$-linear pointed braided big fusion category.
Then $(\mathsf{C},\otimes)$ is braided monoidal equivalent to a skeletal big
fusion category such that%
\[
a_{X,Y,Z}=\frac{s_{X,Y}\cdot s_{X,Z}}{s_{X,Y\otimes Z}}\qquad\text{and}\qquad
a_{Z,X,Y}=\frac{s_{X\otimes Y,Z}}{s_{X,Z}\cdot s_{Y,Z}}%
\]
hold for all simple objects $X,Y,Z$.
\end{theorem}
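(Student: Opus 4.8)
The plan is to run the argument of Theorem \ref{thm_StdForm} essentially verbatim, transported from the braided categorical group setting into the big fusion category setting and rewritten multiplicatively. First I would invoke the classification of Theorem \ref{thm_DGNO_PreMetricGroupClassif}: since $k$ is algebraically closed, $(\mathsf{C},\otimes)$ is determined, up to braided monoidal equivalence, by its big pre-metric group $(G,q)$ with $G:=\pi_{0}(\mathsf{C}_{\operatorname*{simp}},\otimes)$ and $q\in\operatorname*{Quad}(G,k^{\times})$, the relevant coefficient module being $M:=\pi_{1}(\mathsf{C}_{\operatorname*{simp}},\otimes)=k^{\times}$ (Schur's lemma, using algebraic closedness).

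The single point that makes the whole machine applicable is that $k^{\times}$ is divisible whenever $k$ is algebraically closed, as already noted in the excerpt. Hence Theorem \ref{thm_AbstractExistence}(2) applies and furnishes an optimal admissible presentation $(F_{0},\pi,C)$ for $q$. Fixing any admissible lift $\widetilde{(-)}$ as in Definition \ref{def_AL}, Theorem \ref{thm_GeneralFormulaWithAP} produces an abelian $3$-cocycle representative $(h,c)$ with $h(x,y,z)=-C(\widetilde{x},L(y,z))$ and $c(x,y)=C(\widetilde{x},\widetilde{y})$ whose attached quadratic form is exactly $q$.

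Now I would expand $L(y,z)=\widetilde{(y+z)}-\widetilde{y}-\widetilde{z}$ and use $\mathbb{Z}$-bilinearity of $C$ to obtain
\[
h(x,y,z)=C(\widetilde{x},\widetilde{y})+C(\widetilde{x},\widetilde{z})-C(\widetilde{x},\widetilde{(y+z)})=c(x,y)+c(x,z)-c(x,y+z)\text{,}
\]
and, for the second identity, reproduce the short computation from the proof of Theorem \ref{thm_StdForm}: from $c(x+y,z)-c(x,z)-c(y,z)=C(L(x,y),\widetilde{z})$, the admissibility relation of Equation \ref{ljbc5s}, namely $C(u,v)+C(v,u)=b(\pi u,\pi v)$, together with $\pi L(x,y)=0$ (Lemma \ref{lemma_LSymmetries}), gives $c(x+y,z)-c(x,z)-c(y,z)=-C(\widetilde{z},L(x,y))=h(z,x,y)$. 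Translating these additive relations through the $k$-linear analogue of the skeletal model of Definition \ref{def_SkeletalModel} (as in the big-fusion-category case at the end of the proof of Theorem \ref{thm_characterize_simultaneouslySkeletalAndStrictlyAssociative}), and reading automorphisms of the unit as elements of $k^{\times}$ written multiplicatively, turns them into the two displayed identities $a_{X,Y,Z}=s_{X,Y}\cdot s_{X,Z}/s_{X,Y\otimes Z}$ and $a_{Z,X,Y}=s_{X\otimes Y,Z}/(s_{X,Z}\cdot s_{Y,Z})$.

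I do not anticipate a genuine obstacle: the content is simply that the admissible-presentation formalism is insensitive to whether one works in $\mathcal{BCG}$ or in big fusion categories, and the only hypothesis it requires, divisibility of the coefficient group, is automatic for $k^{\times}$ with $k$ algebraically closed. The one place demanding mild care is bookkeeping the passage from the additive notation used throughout the cohomological development to the multiplicative notation of $k^{\times}$ in the final statement, so that differences of $c$-values become quotients of braidings and the associator $h$-values become the displayed ratios.
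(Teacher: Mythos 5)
Your proposal is correct and follows essentially the same route as the paper: the paper's proof of Theorem \ref{thm_AnnFull} is precisely the observation that $k^{\times}$ is divisible for $k$ algebraically closed, which reduces everything to the argument of Theorem \ref{thm_StdForm} (optimal admissible presentation via Theorem \ref{thm_AbstractExistence}, cocycle via Theorem \ref{thm_GeneralFormulaWithAP}, and the same two computations for the identities). You have merely unpacked that reduction explicitly, including the multiplicative rewriting, so there is nothing to correct.
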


\begin{proof}
The only difference to the proof for Theorem \ref{thm_StdForm} is that we need
to show that $\pi_{1}(\mathsf{C},\otimes)=k^{\times}$ is divisible, but this
just amounts to the existence of $n$-th roots $\sqrt[n]{x}$ for any $n\geq1$
and $x\in k^{\times}$, which holds since $k$ is algebraically closed.
\end{proof}

\section{Classification of monoidal structures on graded vector
spaces\label{sect_ClassifSmallExamples}}

Let $G$ be a finite abelian group and $k$ a field. Write $\mathsf{Vect}%
_{k}^{G}$ for the category of $G$-graded finite-dimensional $k$-vector spaces.

The previous results can be used to achieve a classification of all

\begin{itemize}
\item monoidal structures on $\mathsf{Vect}_{k}^{G}$ up to monoidal equivalence,

\item braided monoidal structures on $\mathsf{Vect}_{k}^{G}$ up to braided
monoidal equivalence,

\item symmetric monoidal structures on $\mathsf{Vect}_{k}^{G}$ up to symmetric
monoidal equivalence.
\end{itemize}

Bulacu, Caenepeel and Torrecillas have settled this classification for
$G=\mathbb{Z}/2\mathbb{Z}\oplus\mathbb{Z}/2\mathbb{Z}$ in \cite{MR2837470}.
They found $8$ braided monoidal structures if $k$ does not contain a primitive
$4$-th root of unity and $24$ more otherwise. We recover this in Theorem
\ref{thm_ExplicitAbelian3Cocycles}, as the set of choices for the parameters is

\begin{enumerate}
\item $p^{(0)},p^{(1)}\in\{0,1,2,3\}$,

\item $q^{(0,1)}\in\{0,1\}$,
\end{enumerate}

giving a total of $4\cdot4\cdot2=32$ choices. The theorem also gives the
respective braiding and associator and it is easy to distinguish the two
families depending on whether they output a primitive $4$-th root of unity or
not. The paper \cite{MR3228486} by Huang, Liu and Ye generalized this to the
situation $G=\mathbb{Z}/n_{1}\mathbb{Z}\oplus\mathbb{Z}/n_{2}\mathbb{Z}$ for
arbitrary $n_{1},n_{2}$, see \cite[Theorem 4.2]{MR3228486} (and note that the
set $A_{a,b,d}$ loc. cit. can be empty, as is also stressed there).

\section{\label{sect_ProofOfEilenbergMacLaneThm}Proof of the Eilenberg-Mac
Lane isomorphism}

As we are so close now, let us quickly sketch how one can reprove the original
Eilenberg-Mac Lane isomorphism along these lines. Our method does not add
anything new to checking injectivity though.

\begin{theorem}
[Eilenberg--Mac Lane]Let $G,M$ be abelian groups. The trace%
\begin{align}
H_{ab}^{3}(G,M)  &  \longrightarrow\operatorname*{Quad}(G,M)\label{ljbc6a}\\
(h,c)  &  \longmapsto(q(x):=c(x,x))\nonumber
\end{align}
is an isomorphism of abelian groups.
\end{theorem}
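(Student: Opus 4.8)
The plan is to separate the statement into three parts — well-definedness, surjectivity, and injectivity — and to make surjectivity carry all the new content, importing injectivity from the classical theory. I would write $\operatorname{tr}(h,c):=(x\mapsto c(x,x))$ throughout.

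First I would verify that $\operatorname{tr}$ is a well-defined group homomorphism $H^3_{ab}(G,M)\to\operatorname{Quad}(G,M)$. Additivity is immediate, since $c$ enters linearly. If $(h,c)$ is an abelian $3$-coboundary then $c(x,y)=k(x,y)-k(y,x)$ by Equation \ref{lefmu2a}, so $c(x,x)=0$; hence $\operatorname{tr}$ annihilates $B^3_{ab}$ and descends to cohomology. That $x\mapsto c(x,x)$ actually lands in $\operatorname{Quad}(G,M)$ — i.e.\ that its polarization is $\mathbb{Z}$-bilinear — is a direct if slightly tedious manipulation of the two hexagon identities, Equations \ref{lx_a_1} and \ref{lx_a_2}, which I would run once; this is the input already underlying Theorem \ref{thm_EilenbergMacLaneIso1}.

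When $M$ is divisible there is nothing further to do for surjectivity: Theorem \ref{thm_AbstractExistence}(2) supplies an optimal admissible presentation $(F_0,\pi,C)$ for any $q\in\operatorname{Quad}(G,M)$, and Theorem \ref{thm_GeneralFormulaWithAP} then exhibits an abelian $3$-cocycle with $\operatorname{tr}(h,c)=q$, in fact with an explicit preimage. For general $M$ I would reduce to this case by descent. Choose an embedding $\iota\colon M\hookrightarrow D$ into a divisible group (every abelian group admits one), put $\bar M:=D/M$ with quotient $\rho$, and regard $q$ as $D$-valued. The divisible case yields a $D$-valued cocycle $(h,c)$ with $c(x,x)=q(x)\in M$ for all $x$. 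Its reduction $(\rho h,\rho c)$ is a $\bar M$-valued abelian $3$-cocycle of vanishing trace, since $\rho(c(x,x))=\rho(q(x))=0$. By the injectivity of the trace over $\bar M$-coefficients it is an abelian $3$-coboundary, arising from a normalized $\bar k\colon G^2\to\bar M$ as in Equations \ref{lefmu2} and \ref{lefmu2a}. Lifting $\bar k$ to a normalized $k\colon G^2\to D$ (possible as $\rho$ is surjective) and subtracting the coboundary attached to $k$, I obtain a cocycle $(h',c')$ cohomologous to $(h,c)$ whose reduction mod $M$ is zero; thus $(h',c')$ takes all its values in $\ker\rho=M$ and is an honest $M$-valued abelian $3$-cocycle. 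Since removing a coboundary leaves the trace unchanged, $c'(x,x)=c(x,x)=q(x)$, so $q$ lies in the image and $\operatorname{tr}$ is surjective.

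Injectivity is the one ingredient I would not reprove, following the remark that our method adds nothing there: I would cite the classical Eilenberg--Mac Lane computation, which holds for all coefficient groups, and it is exactly this input that the descent invokes for $\bar M$. The main obstacle is precisely this descent. The formula of Theorem \ref{thm_GeneralFormulaWithAP} is irreducibly $D$-valued, because passing from a pre-admissible to an admissible presentation in Lemma \ref{lemma_DivisibleAndFreeCaseMakePreAdmAdmissible} genuinely consumes divisibility of the coefficients; one therefore cannot remain at the level of $M$-valued cochains and must instead correct the $D$-valued cocycle by a coboundary to drive its values into $M$. This is the same coboundary-control subtlety flagged around Figure \ref{l_fig1}, and it is why injectivity over $\bar M$ cannot be avoided in this approach.
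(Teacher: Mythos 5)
Your proposal is correct, but the surjectivity argument takes a genuinely different route from the paper's. The paper proves surjectivity without ever invoking injectivity: for $G$ finitely generated (more generally, a direct sum of cyclic groups) it uses Lemma \ref{lemma_ConstructAP} to build an optimal admissible presentation over an \emph{arbitrary} coefficient group $M$ --- no divisibility needed --- so the explicit generalized Quinn formula of Theorem \ref{thm_QuinnFormula} produces a cocycle preimage directly; arbitrary $G$ is then handled by a limit over finitely generated subgroups. You instead reduce general $M$ to the divisible case of Theorem \ref{thm_AbstractExistence}(2) by embedding $M\hookrightarrow D$ and descending: the $D$-valued cocycle has trace in $M$, its reduction modulo $M$ has vanishing trace, hence is a coboundary by injectivity over $D/M$, and correcting by a lifted cobounding cochain drives the cocycle into $M$ without moving the trace. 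Each step of your descent checks out (the corrected pair still satisfies the normalization $h'(x,0,z)=0$ because $k$ can be lifted normalized, the cocycle identities hold in $M\subseteq D$, and subtracting a coboundary changes nothing on the diagonal). The trade-off: your argument treats arbitrary $G$ and $M$ uniformly and sidesteps the compatibility-of-cocycles problem across the directed system that the paper itself flags around Figure \ref{l_fig1} (and which makes the paper's limit step rather terse), but it is non-constructive --- the cobounding cochain is produced by a pure existence statement --- and it consumes the classical injectivity for $D/M$-coefficients, so it cannot serve the paper's stated aim of an injectivity-free, formula-producing proof of surjectivity. One small overstatement: the formula of Theorem \ref{thm_GeneralFormulaWithAP} is not ``irreducibly $D$-valued'' in general, since for $G$ a direct sum of cyclic groups Lemma \ref{lemma_ConstructAP} already yields an optimal admissible presentation for arbitrary $M$; the detour through $D$ is only forced for groups $G$ not of that form.
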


\begin{proof}
\textit{(Well-defined)} First of all, given an abelian $3$-cocycle $(h,c)$, we
need to check that $q(x):=c(x,x)$ is indeed a quadratic form. A\ complete
argument with all details is spelled out in \cite[Lemma 3.9]{bcg}. Next, we
have to check that cohomologous cocycles have the same trace. However, by
Equation \ref{lefmu2a} the diagonal terms $c(x,x)$ of an abelian
$3$-coboundary vanish. Thus, the map is a well-defined group
homomorphism.\textit{ (Surjective)}\ If $G$ is finitely generated abelian, the
generalized form of Quinn's formula, Theorem \ref{thm_QuinnFormula}, gives an
explicit preimage for any quadratic form $q$. An arbitrary $G$ is the
(filtering) colimit of its finitely generated subgroups $G^{\prime}$,
partially ordered by inclusion, so%
\[
\underset{G^{\prime}\subseteq G}{\underleftarrow{\lim}}H_{ab}^{3}(G^{\prime
},M)\longrightarrow\underset{G^{\prime}\subseteq G}{\underleftarrow{\lim}%
}\operatorname*{Quad}(G^{\prime},M)
\]
is also surjective in the limit.\textit{ (Injective)} We do not know a way to
improve on the argument of Joyal and Street in \cite[Theorem 12]%
{joyalstreetpreprint}. The idea is to check injectivity for $G:=\mathbb{Z}$
and $G:=\mathbb{Z}/n_{j}\mathbb{Z}$ individually, and then that both sides of
the map are quadratic functors in $G$, showing injectivity for arbitrary
finitely generated abelian groups $G$, and then use a colimit argument as
before. Surely one can run this also explicitly on the level of cocycle
computations, but since it is just about proving triviality, it is not clear
what knowing an explicit coboundary formula would then be useful for,
afterwards. So, we think the approach of Joyal and Street is just right.
\end{proof}

\section{Afterword}

The following question appears very natural to me, but I\ have not been able
to determine the answer (perhaps embarrassingly).

\begin{problem}
Suppose $G$ is a torsion-free abelian group and $M$ an arbitrary abelian
group. Is any quadratic form $q:G\rightarrow M$ of the shape $x\mapsto S(x,x)$
for $S$ some bilinear form?
\end{problem}

\bibliographystyle{amsalpha}
\bibliography{ollinewbib}

\end{document}